\documentclass[12pt]{article}
\usepackage{amsfonts}
\usepackage{extarrows}
\usepackage{mathrsfs,dsfont,bbm}
\usepackage{amsmath,amssymb}
\usepackage[title]{appendix}
\usepackage{authblk}
\usepackage[latin1]{inputenc}
\usepackage{latexsym}
\usepackage{enumitem}
\usepackage[active]{srcltx}
\usepackage[
bookmarks=true,         
bookmarksnumbered=true, 
colorlinks=true, pdfstartview=FitV, linkcolor=blue, citecolor=blue,
urlcolor=blue]{hyperref}

\usepackage{graphicx}
\usepackage{subfig}
\usepackage{booktabs} 
\usepackage{makecell}

\newtheorem{theorem}{Theorem}[section]
\newtheorem{corollary}[theorem]{Corollary}
\newtheorem{lemma}[theorem]{Lemma}
\newtheorem{proposition}[theorem]{Proposition}
\newtheorem{definition}[theorem]{Definition}

\newtheorem{remark}[theorem]{Remark}

\numberwithin{equation}{section}
\def\E{\mathbb{E}}
\def\d{\mathrm{d}}

\def\R{\mathbb{R}}
\def\D{\textbf{D}}

\newenvironment{proof}[1][Proof]{\noindent\textit{#1.} }{\hfill \rule{0.5em}{0.5em}}

\begin{document}
	
	\title{\Large\textbf{Strong solutions to SDEs with singular drifts driven by fractional Brownian motions}}
	
	\author{
		Jiazhen Gu$^1$, Qian Yu$^{1,}$\thanks{Corresponding author. 
			
		\textit{E-mail address}: jiazhengu@nuaa.edu.cn (J. Gu), qyumath@163.com (Y. Qian)}\\
		{\textsuperscript{1}\footnotesize\itshape School of Mathematics, Nanjing University of Aeronautics and Astronautics}\\
		{\footnotesize\itshape Nanjing, Jiangsu 211106, P.R. China}
	}
	\date{\vspace{-40pt}}
	\maketitle
	\begin{abstract}
		In this paper, we establish the strong well-posedness of stochastic differential equations (SDEs) with merely integrable time-dependent drifts driven by fractional Brownian motions with Hurst parameter $H<1/2$. Our result holds over the entire subcritical regime and can be regarded as an extension of (Krylov $\&$ R\"{o}ckner, Probab. Theory Relat. Fields, 131(2): 154--196 (2005)) to the fractional case. Furthermore, we prove the existence of stochastic flows of Sobolev diffeomorphisms for this class of SDEs, which generalizes a result in (Mohammed et al., Ann. Probab. 43, 1535--1576 (2015)). The approach adopted in our work is based on a compactness criterion for random fields in Wiener spaces.
		\vskip.2cm \noindent 
		{\bf Keywords:} Fractional Brownian motions;  Ladyzhenskaya-Prodi-Serrin condition; Compactness criterion; Stochastic flows
		\vskip.2cm \noindent 
		{\bf Mathematics Subject Classification:} 60G22; 60H10; 60H50
	\end{abstract}
	\section{Introduction}
		We consider the Stochastic differential equation (SDE)
		\begin{equation}\label{sec1-eq.1}
			\d X_t=b(t,X_t)\d t+\d B_t^H,\qquad X_0=x\in\R^d
		\end{equation}
		where $b(t,\cdot)\in L^p_xL^q_t:=L^q([0,T];L^p(\R^d)),p,q\geq1$ and $B^H$ is a $\mathcal{F}_t$-adapted fractional Brownian motion (fBm) with Hurst parameter $H\in(0,1)$ defined on a filtered probability space $(\Omega,\mathcal{F},\mathbb{P},(\mathcal{F}_t)_{t\in[0,T]})$. Assume that $b$ satisfies the Ladychenskaya-Prodi-Serrin (LPS) condition \cite[Example 1.2]{galeati2022solution}
		\begin{equation}\label{cond1}
			\tag{H1}
			\frac1q+\frac{Hd}{p}<1-H.
		\end{equation}
		Our primary goal is to solve the following longstanding open problem: does the SDE \eqref{sec1-eq.1} have a unique strong solution under the condition \eqref{cond1} ?
		\subsection{Main Results}
		Our main result, which partially addresses the above open problem, is stated as follows.
		\begin{theorem}\label{sec1-thm.1}
			Assume \eqref{cond1} and additionally the following condition
			\begin{equation}\label{cond2}
				\tag{H2}
				p\geq2,\qquad Hq\geq1,\qquad H<\frac12.
			\end{equation}
			Then, \eqref{sec1-eq.1} has a strong solution, and the pathwise uniqueness holds.
		\end{theorem}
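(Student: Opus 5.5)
The plan is the compactness route announced in the abstract: regularize, show the regularized strong solutions are relatively compact in $L^2(\Omega)$ via a Malliavin-calculus criterion in Wiener space, and pass to the limit. Pathwise uniqueness then comes from a separate Sobolev-regularity argument.

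\textbf{Step 1: approximation and uniform moments.} Let $b_n\in C_c^\infty$ with $b_n\to b$ in $L^q_tL^p_x$ and $\|b_n\|_{L^q_tL^p_x}\le\|b\|_{L^q_tL^p_x}$. Denote by $X^n$ the unique strong solution of \eqref{sec1-eq.1} with drift $b_n$. Write $B^H_t=\int_0^tK_H(t,s)\,\d W_s$ through the Volterra representation. Under \eqref{cond1} I will derive a Krylov-type estimate
\[
\mathbb{E}\Big|\int_s^t f(r,X^n_r)\,\d r\Big|^m\lesssim \|f\|^m_{L^q_tL^p_x}|t-s|^{m\gamma},\qquad \gamma>0,
\]
uniformly in $n$. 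This should follow from Girsanov for fBm together with local nondeterminism of $B^H$, since the Gaussian density of $B^H_r-\mathbb{E}[B^H_r|\mathcal F_s]$ has variance $\asymp (r-s)^{2H}$. Combined with a Khasminskii-type lemma, it also gives exponential moments of $\int_0^T|b_n(r,X^n_r)|^2\,\d r$-type functionals, uniformly in $n$.

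\textbf{Step 2: Malliavin derivative bounds and compactness.} $X^n$ is Malliavin differentiable, and for $\theta<t$
\[
D_\theta X^n_t=K_H(t,\theta)+\int_\theta^t\nabla b_n(r,X^n_r)\,D_\theta X^n_r\,\d r .
\]
Since $\nabla b_n$ is not uniformly controlled, I would not bound this directly. Instead I will use the standard trick (as in Meyer-Brandis--Proske and Banos et al.): expand $D_\theta X^n_t-D_{\theta'}X^n_t$ as an iterated Picard series $\sum_m\int_{\Delta_m}\prod\nabla b_n(s_i,X^n_{s_i})\,\d s$. Each term is then estimated after Girsanov by integration by parts against the Gaussian density, which moves all derivatives onto the density. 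The resulting bound is
\[
\mathbb{E}\big|D_\theta X^n_t-D_{\theta'}X^n_t\big|^2\le C|\theta-\theta'|^{2\alpha},\qquad \sup_n\|D X^n_t\|_{L^2(\Omega\times[0,T])}<\infty,
\]
with $C$ independent of $n$.

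The conditions in \eqref{cond2} enter exactly in the singular integrals produced by this integration by parts. $H<1/2$ makes the kernel $K_H(t,\theta)$ blow up like $(t-\theta)^{H-1/2}$ and yields the needed local nondeterminism. $p\ge2$ lets a Hilbert-space (Plancherel/Fourier) estimate of the shuffled products be used. $Hq\ge1$ is what makes the time-simplex integral of $\prod (s_{i+1}-s_i)^{-H(d/p+1)}$ against $|b|^q$ summable with factorial decay in $m$. Getting combinatorial control of the $m$-fold shuffle (the count is of order $(2m)!$) against Gamma-function decay is \emph{the main obstacle}. The first thing I would try is to perform the estimate in Fourier variables so that only $\|b\|_{L^p}$ with $p\ge2$ appears.

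\textbf{Step 3: limit and uniqueness.} The relative-compactness criterion of Da Prato--Malliavin--Nualart then gives a subsequence with $X^n_t\to X_t$ in $L^2(\Omega)$. Step 1 lets me pass to the limit in $\int_0^tb_n(r,X^n_r)\,\d r$: estimate $b_n-b_k$ via the Krylov bound, and use continuous approximations of $b$ on the fixed law. This shows that $X$ solves \eqref{sec1-eq.1}. Being an $L^2$-limit of adapted processes, $X$ is a strong solution. Uniqueness in law holds by Girsanov, since Step 1 gives Novikov's condition. A strong solution plus uniqueness in law would give pathwise uniqueness if we knew that every weak solution is strong. To get this, I would show that every solution is the $L^2$ limit of the $X^n$: apply the same Krylov estimates to $Y$ and compare $Y$ with $X^n$ through a Zvonkin-free Gronwall argument on $\int_0^t(b(r,Y_r)-b_n(r,Y_r))\,\d r$. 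Failing that, I would fall back on a Yamada--Watanabe-type argument combined with the Sobolev-differentiable flow.
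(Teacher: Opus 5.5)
Your Step~2 is where the argument breaks down. You differentiate with respect to $W$ directly, and this puts the kernel \emph{inside} the Picard series. Every term has the form $\int_{\theta<s_0<s_1<\cdots<s_m<t}\nabla b_n(s_m,X^n_{s_m})\cdots\nabla b_n(s_0,X^n_{s_0})\,K_H(s_0,\theta)\,\mathrm{d}s$. For increments, $K_H(s_0,\theta)$ is replaced by $K_H(s_0,\theta)-K_H(s_0,\theta')$, which Lemma~\ref{appendix-lm.4} only controls at the price of the weight $(s_0-\theta)^{H-\frac12-\gamma}$. After Girsanov you must estimate fourth rather than second moments, so after shuffling these weights from different copies multiply the density singularities $\prod_k(s_k-s_{k-1})^{-H-\frac{Hd}{p}}$. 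When the four weighted times cluster at $\theta$, the local power count is $2+H-\frac{3Hd}{p}-4\gamma$. The resulting bound therefore diverges for $H<\frac14$, $q=\infty$ and $\frac{Hd}{p}$ close to $1-H$, a case permitted by \eqref{cond1}--\eqref{cond2}. This compounding of kernel and density singularities is the obstruction the paper names in Remark~\ref{sec4-rm.2} for the route of Ba\~nos et al. It is not a combinatorial problem: the shuffle count is absorbed by Gamma-function decay under \eqref{cond1} alone (Lemma~\ref{sec4-lm.2}). So attributing $Hq\ge1$ to that step is also off. Moreover, your target bound $\mathbb{E}|\D_\theta X^n_t-\D_{\theta'}X^n_t|^2\le C|\theta-\theta'|^{2\alpha}$ fails even for $b=0$, since $\D_\theta B^H_t=K_H(t,\theta)$ blows up as $\theta\downarrow0$ and as $\theta\uparrow t$. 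Only weighted bounds hold, and these suffice because the compactness criterion only involves the fractional Sobolev seminorm.

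The missing idea is to work with the fBm derivative $\D^H$. It solves the kernel-free equation $\D^H_\theta X^n_t=I_d+\int_\theta^t\nabla b_n(s,X^n_s)\D^H_\theta X^n_s\,\mathrm{d}s$, so $\D^H_\theta X^n_t-\D^H_{\theta'}X^n_t=\D^H_\theta X^n_t\,(I_d-\D^H_{\theta'}X^n_\theta)$. You bound these factors first (Lemmas~\ref{sec4-lm.2} and~\ref{sec4-lm.3}). There the improved exponent $(\theta-\theta')^{2-2H-\frac{Hd}{p}-\frac2q}$, paid for with the weight ${\theta'}^{-\frac{Hd}{p}}$, comes from the Plancherel argument of Lemma~\ref{sec4-lm.1}; this is where $p\ge2$ is used. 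Only afterwards do you apply $\D=\mathcal{K}_H^*\D^H$. The singular weight $(r-\theta)^{H-\frac32}$ is then integrated against the already estimated $\|\D^H_rX^n_t-\D^H_\theta X^n_t\|_{L^2(\Omega)}$ and never enters the shuffled integrals. Its integrability amounts to $\frac{Hd}{p}+\frac2q<1$, which under \eqref{cond1} is exactly what $Hq\ge1$ provides.

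Two further steps need repair. First, for $H<\frac12$ the Girsanov exponent is not an $\int_0^T|b|^2$-type functional. It is $\int_0^T|v_s|^2\,\mathrm{d}s$ with $v_s=\Gamma(\frac12-H)^{-1}s^{H-\frac12}\int_0^s(s-r)^{-\frac12-H}r^{\frac12-H}b(r,B^H_r)\,\mathrm{d}r$. In fact $\int_0^T|b|^2(r,x+B^H_r)\,\mathrm{d}r$ need not even be integrable once $\frac2q+\frac{2Hd}{p}\ge1$, which \eqref{cond1} permits. Since $v_s$ for $s>t$ depends on the whole path on $[0,t]$, $\mathbb{E}[\int_t^T|v_s|^2\,\mathrm{d}s\,|\,\mathcal{F}_t]$ is not uniformly small, and Khasminskii's lemma does not apply. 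The paper only proves $\mathbb{E}(\int_0^T|v_s|^2\,\mathrm{d}s)^n\le C^{2n}n^{2(1-\kappa+\varepsilon)n}$ (Lemma~\ref{sec3-lm.1}, via a dyadic splitting into local and memory parts). Through BDG this gives Kazamaki's criterion $\sup_t\mathbb{E}\exp(\lambda\int_0^tv_s\,\mathrm{d}W_s)<\infty$, but not Novikov's, so your claim that Step~1 yields Novikov's condition is unsupported.

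Second, a Gronwall comparison of an arbitrary solution $Y$ with $X^n$ has nothing to work with when $b$ is singular. It is also unnecessary: Girsanov determines the joint law of $(Y,B^H)$, since both $B^H=Y-x-\int_0^\cdot b(r,Y_r)\,\mathrm{d}r$ and the density $\mathrm{d}\mathbb{P}/\mathrm{d}\tilde{\mathbb{P}}$ are functionals of $Y$. Hence if $X=F(B^H)$ is the strong solution you constructed, $(Y,B^H)$ and $(F(B^H),B^H)$ have the same law, so $Y=F(B^H)$ almost surely. This is the paper's ``strong existence plus uniqueness in law'' argument for pathwise uniqueness.
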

		Furthermore, we study the regularity of the stochastic flow generated by the SDE \eqref{sec1-eq.1}.
		\begin{theorem}\label{sec1-thm.2}
			Let $X_{s,t}^x$ be a unique strong solution to the SDE
			\begin{equation}\label{sec1-eq.2}
				\d X_{s,t}^x=b(t,X_{s,t}^x)\d t+\d B_t^H,\qquad 0\leq s\leq t\leq T,\qquad X_{s,s}^x=x\in\R^d,
			\end{equation}
			where $b\in L^p_xL^q_t$ with $p,q$ satisfying \eqref{cond1} and \eqref{cond2}. Then, the map $\phi_{s,t}(x,\omega):[0,T]^2\times\R^d\times\Omega\ni(s,t,x,\omega)\mapsto X_{s,t}^x(\omega)$ is a stochastic flow of homeomorphisms for the SDE \eqref{sec1-eq.1}, that is, there exists a universal set $\Omega^*\in\mathcal{F}$ of full Wiener measure such that for all $\omega\in\Omega^*$, the following states are true:
			\begin{enumerate}
				\item $\phi_{s,t}(x,\omega)$ is continuous in $(s,t,x)\in[0,T]^2\times\R^d$.
				\item $\phi_{s,t}(\cdot,\omega)=\phi_{u,t}(\phi_{s,u}(\cdot,\omega),\omega)$ for all $s,u,t\in\R^d$.
				\item $\phi_{s,s}(x,\omega)=x$ for all $(s,x)\in[0,T]\times\R^d$.
				\item $\phi_{s,s}(\cdot,\omega):\R^d\mapsto\R^d$ are homeomorphisms for all $s,t\in[0,T]$. 
			\end{enumerate}
			Moreover, $\phi_{s,t}(\cdot,\omega):\R^d\mapsto\R^d$ is Sobolev differentiable, i.e.
			$$\phi_{s,t}(\cdot,\omega)\text{ and } \phi_{s,t}^{-1}(\cdot,\omega)\in L^2(\Omega;W^{1,p_1}(\R^d,w)),\qquad\forall\; s,t\in[0,T],\,p_1\in(1,\infty)$$ 
			where $W^{1,p_1}(\R^d,w)$ is a  weighted Sobolev space with weight $w$.
		\end{theorem}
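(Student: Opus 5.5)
The plan is to build the flow from a smooth approximation and then pass to the limit, taking Theorem \ref{sec1-thm.1} as already proved. First mollify the drift: set $b_n=b*\rho_n$, truncated so that it is bounded and smooth, with $b_n\to b$ in $L^p_xL^q_t$. For $b_n$ the classical theory applies. The SDE \eqref{sec1-eq.2} with $b_n$ generates a stochastic flow $\phi^n_{s,t}$ of $C^1$-diffeomorphisms, and its Jacobian solves the linear equation
\[
\nabla\phi^n_{s,t}(x)=I+\int_s^t\nabla b_n(r,\phi^n_{s,r}(x))\nabla\phi^n_{s,r}(x)\,\d r .
\]
The core of the argument is a family of uniform-in-$n$ estimates. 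Fix $p_1\in(1,\infty)$. I aim for
\[
\sup_n\sup_{s,t,x}\E\bigl[|\nabla\phi^n_{s,t}(x)|^{p_1}\bigr]<\infty .
\]
\begin{enumerate}
\item Bound the Jacobian through the exponential moment $\E\exp\bigl(\lambda\int_s^t|\nabla b_n(r,\phi^n_{s,r})|\,\d r\bigr)$. Since $\nabla b_n$ does not stay bounded, control it instead by a Girsanov transform to $B^H$ together with a local-time/averaging estimate. Concretely, use the fBm averaging bound: $\int_s^t f(r,x+B^H_r)\,\d r$ has exponential moments in terms of $\|f\|_{L^p_xL^q_t}$ after spending one derivative. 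This is exactly where the subcriticality \eqref{cond1} and $H<1/2$ give a negative-order Besov estimate. If this route is unavailable, fall back on Malliavin-calculus integration by parts in the fBm direction.
\item Derive the analogous bounds for the inverse flows, either from the backward equation or from $\det\nabla\phi^n=\exp\int\operatorname{div}b_n$ together with the identity $\nabla(\phi^n)^{-1}=(\nabla\phi^n)^{-1}\circ(\phi^n)^{-1}$.
\item Obtain Hölder-type moment bounds
\[
\E|\phi^n_{s,t}(x)-\phi^n_{s',t'}(y)|^{m}\lesssim |x-y|^{m}+|s-s'|^{\gamma m}+|t-t'|^{\gamma m}
\]
uniformly in $n$. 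These follow from the gradient bounds and the Hölder regularity of $B^H$.
\end{enumerate}

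Next pass to the limit. Theorem \ref{sec1-thm.1} gives $\phi^n_{s,t}(x)\to X^x_{s,t}$ in $L^2(\Omega)$ for each fixed $(s,t,x)$. This can be obtained from the compactness criterion in Wiener space: uniform Malliavin-derivative bounds yield compactness, and uniqueness of the limit then gives convergence. Combining this with the uniform Hölder bounds and Kolmogorov's continuity theorem produces a version of $X$ continuous in $(s,t,x)$, which is property 1. Working on a weighted space with $w(x)=(1+|x|^2)^{-k}$ and $k$ large, $\phi^n$ is bounded in $L^2(\Omega;W^{1,p_1}(\R^d,w))$. By weak compactness and the identification of the pointwise limit, $\phi_{s,t}$ belongs to this space, and so does the inverse, by the same argument applied to the inverse flows.

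The remaining flow properties 2 to 4 come from pathwise uniqueness. The cocycle relation $\phi_{s,t}=\phi_{u,t}\circ\phi_{s,u}$ holds almost surely for each fixed $x$. Since both sides are continuous in $x$, it holds on a single universal null-set complement $\Omega^*$. Homeomorphy follows in three steps:
\begin{enumerate}
\item Injectivity comes from a negative-moment estimate $\E|\phi_{s,t}(x)-\phi_{s,t}(y)|^{-m}\lesssim|x-y|^{-m}$, proved for $\phi^n$ uniformly as in the gradient bound.
\item Properness comes from the bound $\E(1+|\phi_{s,t}(x)|)^{-m}\lesssim(1+|x|)^{-m}$.
\item Surjectivity then follows by a degree/extension argument, or from continuity of the inverse flow obtained via the backward SDE.
\end{enumerate}

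I expect the main obstacle to be step 1: the exponential-moment control of $\int|\nabla b_n|(r,\phi^n_r)\,\d r$ uniformly in $n$ when $b$ lies only in $L^p_xL^q_t$. This needs a sharp regularization estimate for fBm that uses the full gap in \eqref{cond1}, and the conditions $Hq\ge1$ and $p\ge2$ of \eqref{cond2} should enter precisely there.
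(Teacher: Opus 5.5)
Your step~1 does not work, and everything downstream depends on it: the bound in $L^2(\Omega;W^{1,p_1}(\R^d,w))$, the spatial H\"older estimate, and the negative moments you want for injectivity. Gronwall on the Jacobian equation forces the absolute value, $|\nabla\phi^n_{s,t}(x)|\le\exp\int_s^t|\nabla b_n(r,\phi^n_{s,r}(x))|\,\d r$. An averaging bound that ``spends one derivative'' applies only to the signed $\nabla b_n$; $|\nabla b_n|$ is not the derivative of anything controlled by $\Vert b_n\Vert_{L^p_xL^q_t}$.

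Concretely, after your Girsanov reduction, Jensen gives
$\E\exp\bigl(\lambda\int_s^t|\nabla b_n|(r,x+B^H_r)\,\d r\bigr)\ge\exp\bigl(\lambda\int_s^t\!\int_{\R^d}|\nabla b_n(r,y)|\,p_r(y-x)\,\d y\,\d r\bigr)$,
where $p_r$ is the density of $B^H_r$. The right side is a weighted local $L^1$-norm of $\nabla b_n$. Take $b(x)=\chi(x)\sin(1/x_1)e_1$ with $\chi$ a smooth cutoff; it satisfies \eqref{cond1}--\eqref{cond2} with $q=\infty$ and $p$ large. For this $b$ the right side tends to $\infty$ along any approximating sequence, by lower semicontinuity of total variation.

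Removing the absolute value does not help for $d\ge2$. The matrix equation for $\nabla\phi^n$ is not solved by the exponential of the signed integral, because the matrices $\nabla b_n(s_j,\cdot)$ at different times do not commute. Only for $d=1$ does $\partial_x\phi^n_{s,t}=\exp\int_s^tb_n'(r,\phi^n_{s,r})\,\d r$ hold. Your fallback, ``Malliavin integration by parts in the fBm direction'', is too vague to close this.

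The missing idea, used in Lemma \ref{sec5-prop.1}, is to keep the cancellations by never putting absolute values inside the time integrals:
\begin{enumerate}
\item Expand $\nabla\phi^n_{s,t}(x)=I_d+\sum_{m\ge1}\int_{\Delta_{s,t}^m}\prod_j\nabla b_n(s_j,\phi^n_{s,s_j}(x))\,\d s$.
\item Transfer to $x+B^H$ by Girsanov, using the moment bound on $\d\mathbb{P}/\d\tilde{\mathbb{P}}$ from Theorem \ref{sec3-thm.1}.
\item Shuffle the $2^{q_1}$-th power into at most $C^m$ iterated integrals over a larger simplex.
\item Bound each expectation by moving every derivative onto the joint density of fBm, via Proposition \ref{sec2-thm.1} and Corollary \ref{sec2-col.1}.
\end{enumerate}
This yields integrands $\prod_j\Vert b_n\Vert_{L^p_x}(s_j)(s_j-s_{j-1})^{-H-Hd/p}$. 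Their simplex integrals decay like inverse Gamma functions of $m$, so the series converges uniformly in $n$ under \eqref{cond1} with $p,q\ge2$ and $H<1/2$. Contrary to your expectation, $Hq\ge1$ is not used here; it enters in the compactness Lemmas \ref{sec4-lm.4}--\ref{sec4-lm.5}.

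Separately, your step~3 is incomplete. The $t$-increment contains $\int_{t'}^tb_n(r,\phi^n_{s,r})\,\d r$, which needs an averaging bound rather than the gradient bound. For the $s$-increment you cannot apply a $\sup_x\E$ gradient bound at the random point $\phi^n_{s,s'}(x)$, since for fBm $\phi^n_{s',t}$ is not independent of $\mathcal{F}_{s'}$. The paper handles these with \cite[Lemmas 3.10--3.11]{butkovsky2023weak} and a Besov shift estimate.
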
 
		\subsection{Motivation and Previous Results}
		An Ill-posed deterministic differential system might become well-posed by the addition of irregular noise. For instance, the differential equation $\d X_t=\mathrm{sign}(X_t)|X_t|^\alpha,\alpha\in(0,1),X_0=x\in\R$ has infinitely many solutions, while the perturbed equation $\d X_t=\mathrm{sign}(X_t)|X_t|^\alpha+\d W_t$, where $W$ is a Brownian motion, has a unique solution. This phenomenon is called \textit{regularization by noise}. We refer to \cite{flandoli2011random} for more examples.
		
		Inspired by these examples, regularization by noise has received extensive research attention. In the case where the noise is a standard Brownian motion, the pioneering works of Zvonkin \cite{zvonkin1974transformation} and Veretennikov \cite{veretennikov1981strong} established the strong existence and uniqueness of solutions to the SDE
		\begin{equation}\label{sec1-eq.3}
			\d X_t=b(t,X_t)\d t+\d W_t,\qquad X_0=x\in\R^d
		\end{equation} 
		when the drift coefficient $b$ is merely bounded and measurable. This result was extended by Krylov and R\"ockner \cite{krylov2005strong} to integrable drifts $b(t,\cdot)\in L_x^pL_t^q$ with $p,q\geq 2$ and 
		$$\frac{2}{q}+\frac{d}{p}<1.$$
		Recently, Krylov \cite{krylov2021strong} has made significant progress regarding the critical condition $1/q+d/(2p)=1/2$, establishing the strong well-posedness of \eqref{sec1-eq.3} for the case that $b(t,x)=b(x)\in L^d(\R^d)$ with $d\geq 3$. Subsequently, in \cite{krylov2025weak}, he addressed situations where the equation coefficients are time-dependent, and provided a detailed discussion of conditional and unconditional strong uniqueness. Following his works, R\"ochner and Zhao \cite{rockner2023sdes,rockner2025sdes} established weak and strong well-posedness of \eqref{sec1-eq.3} under the critical condition $2/q+d/p=1$ with $p,q>2$ and $d\geq3$. The method they employed to construct the strong solution relies on a compactness criterion for random fields in Wiener spaces, which provides substantial inspiration for our work. We will discuss this in detail later.
		
		In stark contrast to martingale settings, regularization by noise phenomena in non-martingale frameworks, particularly those involving SDEs driven by fBm, remains far less well understood. A key reason for this gap is that all proofs in the aforementioned articles rely heavily on It\^o's calculus, which is inapplicable beyond the semimartingale setting. Since it has been illustrated in \cite{catellier2016averaging} that a rougher driving noise may allow for more irregular drift $b$ such that the SDE remains well-posed, investigating the case of SDEs driven by fBm with $H<1/2$ becomes particularly important.
		
		Regularization by fractional Brownian noise was initiated by Nualart and Ouknine in \cite{nualart2002regularization,nualart2003stochastic}, which developed the Girsanov transformation for fBm. Building on their seminal works, Ba\~nos et al. \cite{banos2020strong} combined Girsanov's theorem with Malliavin calculus to construct strong solutions, provided that the coefficient $b\in L^1(\R^d;L^\infty([0,T];\R^d))\cap L^\infty(\R^d;L^\infty([0,T];\R^d))$ and Hurst parameter $H<1/(2d+4)$. In addition to these two works, many relevant investigations utilized Girsanov's theorem to analyze the weak and strong existence of the SDE \eqref{sec1-eq.1}. We highlight several representative works as follows: 
		\cite{banos2022strong},  \cite{butkovsky2021approximation},
		\cite{butkovsky2024regularization},
		\cite{galeati2023distribution},
		\cite{hu2009backward}. 
		
		An alternative way to study regularization by fBm is the stochastic sewing technique, which was introduced by Catellier and Gubinelli \cite{catellier2016averaging}, and L\^e \cite{le2020stochastic}. Following their prior works, stochastic sewing techniques have been further advanced and applied to the study of equation \eqref{sec1-eq.1} in \cite{anzeletti2023regularisation}, \cite{galeati2022solution}, \cite{goudenege2025numerical}. A breakthrough in establishing the well-posedness of \eqref{sec1-eq.1} under the subcritical case was obtained by \cite{butkovsky2023stochastic}, which proved that the equation \eqref{sec1-eq.1} admits a weak solution if $b$ is time-independent and belongs to $L^p(\R^d)$ with $Hd/p<1-H$. A subsequent work \cite{butkovsky2023weak} considered the case of time-dependent drift and showed the weak existence of a solution to the SDE \eqref{sec1-eq.1} under the condition
		\begin{equation}\label{sec1-eq.4}
			\frac{1-H}q+\frac{Hd}p<1-H,
		\end{equation}
		which is even better than the LPS condition \eqref{cond1}. \cite{butkovsky2023weak} also showed that weak existence might fail if this condition is not satisfied.
		
		In summary, we present previous results in the following table and conduct a systematic comparison between these results and ours.
		\newpage
		\begin{table}[htbp]
			\centering  
			\caption{Comparison to Previous Results (Parameter Range for Weak / Strong Solution)}  
			\label{comparison}  
			\renewcommand{\arraystretch}{1.5}
			\begin{tabular}{|l|c|c|}
				\hline 
				\textbf{References} & \textbf{Weak Solution} & \textbf{Strong Solution} \\
				\hline
				Krylov $\&$ R\"ockner \cite{krylov2005strong} & / & $\frac2q+\frac dp<1$, $p,q\geq2$, $H=\frac12$ \\
                \hline
				Anzeletti et al.\cite{anzeletti2023regularisation} & $q=\infty$, $\frac dp<\frac1{2H}-\frac12$ & $q=\infty$, $\frac dp<\frac1{2H}-1$, $H<\frac12$\\
                \hline
				Butkovsky $\&$ Gallay \cite{butkovsky2023weak} & $\frac{1-H}q+\frac{Hd}p<1-H$, $H\in (0,1)$ & / \\
                \hline
				Butkovsky et al. \cite{butkovsky2024regularization} & $\left\{\begin{array}{l}
					p\geq 2dH, \\
					\frac{Hd}p+\frac1q<1-H, H<\frac12,\\
					\frac{Hd}p<\frac{p^{-1}-q^{-1}}{p^{-1}-2^{-1}}\left(\frac12-H\right) \\
					\qquad \text{ if } p<(1-H)^{-1} \text{ and } q>2
				\end{array}\right.$ & same\\
                \hline
				Butkovsky et al. \cite{butkovsky2023stochastic} & $q=\infty$, $\frac{Hd}p<1-H$, $H\in(0,1)$ & / \\
                \hline
				Catellier $\&$ Gubinelli \cite{catellier2016averaging} & $q=\infty$, $\frac{Hd}p<\frac1{2H}-1$, $H<\frac12$ & / \\
                \hline
				\footnotemark[1]Galeati $\&$ Gerencs\'er \cite{galeati2022solution} & \makecell{$\frac 1q+\frac{Hd}p<1-H$,\\ $\frac dp<\frac1{2H}-\frac12$, $H\in(0,1)$} & \makecell{$\frac1{\min\{q,2\}}+\frac{Hd}p<1-H$,\\ $H\in(0,\infty)\setminus\mathbb{N}_+$}\\
                \hline
				L\^e \cite{le2020stochastic} & \makecell{$\frac1q+\frac{Hd}p<\frac12$, $H<\frac12$,\\ $p,q\geq2$} & \makecell{$\frac1q+\frac{Hd}p<\frac12-H$, $H<\frac12$,\\ $p,q\geq2$} \\
                \hline
			\end{tabular}
		\end{table}
		
		\footnotetext[1]{FBm of parameter $H=1$ is somewhat ill-defined; however, one can extend the definition to all values of $H\in(0,\infty)\setminus\mathbb{N}$ inductively as in \cite{picard2010representation} by $B_t^{H+1}=\int_0^t B_s^H\d s$.}
		\begin{remark}
			Note that \eqref{cond1} and \eqref{cond2} reduce to
			$$\frac2q+\frac{d}p<1,\qquad p,q\geq2,$$
			by setting $H=1/2$. Hence, our result can be viewed as an extension of Krylov-R\"ockner condition \cite{krylov2005strong} to the fractional case.
		\end{remark}
		\begin{remark}
			When $q=\infty$, conditions \eqref{cond1} and \eqref{cond2} become 
			$$\frac{Hd}p<1-H,\qquad p\geq2,\qquad H<\frac12.$$
			This shows that, in the time-independent case, our result reaches the natural critical threshold suggested by scaling and almost covers the full subcritical regime for strong well-posedness. Furthermore, for the case where $q<\infty$, in contrast to previous results, our proposed conditions allow a substantially larger admissible region in the $(p,q)$--plane and are in fact consistent with the heuristic scaling of the equation. To the best of our knowledge, this provides the first strong well-posedness result for fBm-driven SDEs with drifts in $L_x^pL_t^q$ under such a near-critical integrability condition.
			
		    However, our work still has some restrictions. Firstly, we don't consider the case $H>1/2$, where the noise is smoother. Secondly, we require $p\geq2$ and $q\geq 1/H$, thus leaving the strong well-posedness of \eqref{sec1-eq.1} in the range $1<p<2$ and $2<q<1/H$ as an an open problem.
		\end{remark}
		
		Another crucial objective of this paper is the study of the regularity of the stochastic flow associated with the SDE \eqref{sec1-eq.1}. This is motivated by the long-standing relationship between stochastic flows and PDEs. In the classical Brownian setting, the regularity and structural properties of the stochastic flow are known to play a fundamental role in the analysis of transport and continuity equations, as well as in the stochastic Lagrangian formulation of the Navier-Stokes equations (see e.g. \cite{flandoli2010well}, \cite{fedrizzi2013noise}, \cite{rezakhanlou2016stochastically}, \cite{zhang2010stochastic}, \cite{zhang2016stochastic}).
		
		From this point of view, a natural question arises: can rough noise in the sense of $B^H$ or a related noise exhibiting more irregular path behavior possess an even stronger regularising effect on the aforementioned PDEs ? However, in contrast to the Brownian context, the corresponding theory for the fractional case is still far from complete. To the best of our knowledge, only \cite{amine2023well,catellier2016rough,nilssen2020rough} have employed the regularizing effect of fBm to investigate the well-posedness of the deterministic transport equations with singular velocity fields. Nevertheless, we believe that the study of stochastic flows in the fractional setting will provide more applications in the future.
		
		Here is a short review of classical results on the regularity property of the stochastic flow generated by the SDE \eqref{sec1-eq.1}.
		\begin{enumerate}
			\item Ba\~nos et al. \cite{banos2022strong} showed that $X_t^\cdot\in L^2(\Omega,W^{1,p_1}(U))$ for all $p_1>1$ when $$b(t,\cdot)\in L^1(\R^d;L^\infty([0,T];\R^d))\cap L^\infty(\R^d;L^\infty([0,T];\R^d))$$ and $H<1/(2d+4)$, where $U$ is an open and bounded subset of $\R^d$.
			\item Mohammad et al. \cite{mohammed2015sobolev} established the existence of a stochastic flow of Sobolev diffeomorphisms for the SDE \eqref{sec1-eq.1}, provided that $b$ is bounded and $H=1/2$. 
			\item Rezakhanlou \cite{rezakhanlou2014regular} obtained a bound for $\frac{\partial}{\partial x}X_t^x$, as well as joint H\"older continuity in both $t$ and $x$ variables of the stochastic flow  under assumptions that $b(t,\cdot)\in L_x^pL_t^q,2/q+d/q<1$ and $H=1/2$.
 		\end{enumerate}
		\begin{remark}
			We remark that our result covers all the results mentioned above. In fact, when $b$ is bounded (i.e. $q=p=\infty$), Theorem \ref{sec1-thm.2} shows that \eqref{sec1-eq.1} admits a stochastic flow of Sobolev diffeomorphisms without additional constraints other than $H<1/2$.
			
			Moreover, when we turn to the integrable case, Theorem \ref{sec1-thm.2} can be considered as an extension of \cite[Theorem 1.1]{rezakhanlou2014regular} to the fractional context.
		\end{remark}
		\subsection{Sketch of the Proof}
		In \cite{butkovsky2023weak}, Butkovsky and Gallay obtained the weak well-posedness of \eqref{sec1-eq.1} under a milder condition \eqref{sec1-eq.4}. The common idea to construct strong solutions from weak solutions is to use the celebrated pathwise uniqueness argument obtained by Yamada and Watanabe \cite{yamada1971uniqueness}, which asserted that weak existence combined with pathwise uniqueness implies strong uniqueness. In contrast, our method is more probabilistic and straightforward, employing a compactness criterion obtained in \cite{da_prato1992compact}. Let $\{b_n\}_{n=1}^\infty\subset C_c^\infty([0,T]\times\R^d)$ be the approximating sequence of $b$ such that $\lim_{n\rightarrow\infty}\Vert b_n-b\Vert_{L^p_xL^q_t}=0$ and $\sup_n\Vert b_n\Vert_{L^p_xL^q_t}\leq \Vert b\Vert_{L^p_xL^q_t}$ (the existence of this sequence can be obtained by a standard approximating result), and let $X_t^n$ be the strong solution to \eqref{sec1-eq.1} with $b$ replaced by $b_n$. The key idea of our approach is to show that $X_t^n$ converges to a random field $X_t$, which is exactly the strong solution to \eqref{sec1-eq.1}. To this end, we apply a compactness criterion for $L^2$ random fields in Wiener spaces (see Section \ref{sec2-2}). This framework for the construction of strong solutions was originally introduced by Meyer-Brandis and Proske \cite{meyer2010construction}, and subsequently employed in \cite{banos2024}, \cite{banos2020strong}, \cite{banos2022strong},  \cite{menoukeu2013variational}, \cite{rockner2025sdes}.
		
		In order to construct strong solutions via the aforementioned approach, we need to estimate the block integrals
		
		\begin{equation}\label{sec1-eq.5}
			\left|\E\int\cdots\int_{\theta<s_1<\cdots<s_m<t}\prod_{j=1}^m\partial_{\alpha_j}b_j(s_j,X_{s_j}^n)\d s\right|.
		\end{equation}
		(see Lemma \ref{sec4-lm.2} and \ref{sec4-lm.3}). If Girsanov's theorem for fBm is applicable under the subcritical regime \eqref{cond1}, we can simplify the estimation of \eqref{sec1-eq.5} to the following one
		\begin{equation}\label{sec1-eq.6}
			\left|\E\int\cdots\int_{\theta<s_1<\cdots<s_m<t}\prod_{j=1}^m\partial_{\alpha_j}b_j(s_j,x+B_{s_j}^H)\d s\right|.
		\end{equation}
		When $H=1/2$, such an estimation was first obtained by Davie \cite{davie2007uniqueness}, provided that $b$ is bounded, and later was generalized to the integrable case by Rezakhanlou \cite{rezakhanlou2014regular}. More recently, R\"ockner and Zhao \cite{rockner2025sdes} used the Kolmogorov equation and some PDE techniques to directly derive a bound for \eqref{sec1-eq.5}. In the case where $H\neq 1/2$, the estimation of \eqref{sec1-eq.5} will be more difficult since fBm does not possess the independence of increments. To overcome this, we develop a novel density estimate for fBm via Malliavin calculus (see Proposition \ref{sec2-thm.1}).
		
		In conclusion, the proof of Theorem \ref{sec1-thm.1} can be divided into the following steps.
		\begin{enumerate}[label={\textbf{Step \arabic*}:}, leftmargin=*]
			\item We first derive a sharp upper bound for the derivative of the joint density of fBm by means of Malliavin calculus. This estimate plays a fundamental role in establishing all subsequent moment bounds.
			\item Then, we verify Kazamaki's condition merely under the LPS condition \eqref{cond1} imposed on the drift. This allows us to apply the Girsanov transformation for fBm.
			\item By exploiting the explicit relation between the Malliavin derivative with respect to fBm and the Wiener process, we achieve the shift of singularities for the kernel function of fBm, which makes it possible to obtain uniform Malliavin derivative estimates and to verify the compactness criterion for $L^2(\Omega)$ (see Remark \ref{sec4-rm.2} for details). This idea stems from \cite{nualart2009malliavin}.
			\item Finally, by virtue of the compactness criterion, we prove that the approximating solutions $X_t^n$ converge to $X_t$ and hence establish the strong well-posedness of \eqref{sec1-eq.1}.
		\end{enumerate}
		
		In addition, the proof of Theorem \eqref{sec1-thm.1} follows by analogy with that in \cite{mohammed2015sobolev}, thanks to the density estimate for fBm, and the validity of Girsanov's theorem and compactness criterion.
		
		The rest of this paper is organized as follows. In the rest of this section, we introduce some notations, conventions and definitions that are used in this paper. In Section \ref{sec2}, we recall some basic tools for analyzing weak and strong well-posedness of the SDE \eqref{sec1-eq.1}, including Malliavin calculus, the compactness criterion for $L^2$ space and Girsanov's theorem for fBm. In particular, we derive a key estimate for the joint density of $(B_{t_1}^H,B_{t_2}^H,\cdots,B_{t_n}^H)$. Section \ref{sec3} is devoted to verifying the applicability of Girsanov's theorem, and Section \ref{sec4} is dedicated to proving the strong well-posedness of \eqref{sec1-eq.1}. Finally, we study the regularity property of the stochastic flow in Section \ref{sec5}. Several technical lemmas are given in the Appendix. Throughout this paper, the letter $C$ (maybe with subscripts) denotes a generic positive finite constant and may change from line to line.
		\subsection{Notations, Conventions and Definitions}
		
		We close this section by introducing some notations, conventions and definitions.
		\begin{itemize}
			\item $\mathbb{N}:=\{0,1,2,\cdots\}$, $\mathbb{N}_+:=\{1,2,\cdots\}$.
			\item For $p_1\geq1$, $p_1'$ denotes its conjugate exponent with $1/p_1+1/{p_1'}=1$.
			\item Given $\theta,t\in[0,T]$, set $\Delta_{\theta,t}^m:=\{(s_1,\cdots,s_m)\in\R^m: \theta\leq s_1\leq \cdots\leq s_m\leq t\}$. For integrals over $\Delta_{s,t}^m$, we use the shorthand notation $\d s:=\d s_1\cdots\d s_m$.
			\item For a multi-index $\alpha\in\mathbb{N}^d$, $\partial_{\alpha}:=\partial_{x_1}^{\alpha^{(1)}}\cdots\partial_{x_d}^{\alpha^{(d)}}$ and $|\alpha|=\sum_{i=1}^d\alpha^{(i)}$.
			\item Given a $d$-dimensional vector $x\in\R^d$, $x^{(i)},i=1,\cdots,d$ denotes the $i$-th component of $x$ and $|x|^2=\sum_{i=1}^d|x^{(i)}|^2$. For $y\in\R^d$, $\langle x,y\rangle_{\R^d}:=\sum_{i=1}^dx^{(i)}y^{(i)}$.
			\item For $A=(a_{ij})\in\R^{d\times d}$, we let $\Vert A\Vert^2=\sum_{i=1}^d\sum_{j=1}^d|a_{ij}|^2$ be the Frobenius norm.
			\item For a differentiable map $f:\R^{d_1}\ni x\mapsto(f^{(1)}(x),\cdots,f^{(d_2)}(x))^{\mathrm{T}}\in\R^{d_2}$, the Jacobian matrix is denoted by $\nabla f$, that is $\nabla f=\left(\frac{\partial}{\partial x_i}f^{(j)}(x)\right)_{i=1,\cdots,d_1,j=1,\cdots,d_2}$.
			\item Let $\mathcal{B}_{p_1}^\alpha:=\mathcal{B}_{p_1,\infty}^\alpha(\R^d)$ be the Besov space of regularity $\alpha\in\R$ and integrability $p_1\geq1$.
			\item Suppose that $f:[0,T]\times\R^d\mapsto\R^d$, we define
			$$\Vert f\Vert_{L^p_x}(t):=\left(\int_{\R^d}|f(t,x)|^p\d x\right)^{\frac1p},\text{ and }\;\Vert f\Vert_{L^p_xL_t^q}:=\left(\int_0^T\Vert f\Vert_{L^p_x}^q(t)\d t\right)^{\frac1q}.$$
			\item Let $(\mathcal{F}_t)_{t\in[0,T]}$ be a filtration. The conditional expectation $\E[\cdot|\mathcal{F}_s]$ is denoted by $\E_s[\cdot]$.
		\end{itemize}
		\begin{definition}
			$($$\mathcal{F}_t$-fractional Brownian motion \cite[Definition 1]{nualart2002regularization} $)$ Let $(\Omega,\mathcal{F},\mathbb{P},(\mathcal{F}_t)_{t\in[0,T]})$ be a filtered probability space where $(\mathcal{F}_{t})_{t\in[0,T]}$ is a filtration such that $\mathcal{F}_0$ contains all the sets of probability zero. We say that $B^H$ $(H<1/2)$ is a $\mathcal{F}_t$-fractional Brownian motion if there exists a $\mathcal{F}_t$-Brownian motion $W$ such that $B^H$ can be formulated as
			$$B_t^H=\int_0^tK_H(t,s)\d W_s,$$
			where for $0<\leq s\leq t\leq T$,
			\begin{align*}
				K_H(t,s):=C_H\left[\left(\frac{t}{s}\right)^{H-\frac12}(t-s)^{H-\frac12}+\left(\frac12-H\right) s^{\frac12-H}\int_s^tr^{H-\frac32}(r-s)^{H-\frac12}\d r\right].
			\end{align*}
		\end{definition}
		\begin{definition} 
			Let $(\Omega,\mathcal{F},\mathbb{P},(\mathcal{F}_t)_{t\in[0,T]})$ be a complete filtered probability space and $B^H$ be a $\mathcal{F}_t$-fractional Brownian motion.
			\begin{itemize}
				\item Weak solution: We say that the triple $(X,B^H)$ is a weak solution to \eqref{sec1-eq.1} if $X$ solves \eqref{sec1-eq.1} and is $\mathcal{F}_t$-adapted.
				\item Strong solution: A weak solution $(X,B^H)$ such that $X$ is $\mathcal{F}^{B^H}$-adapted is called a strong solution, where $(\mathcal{F}^{B^H})_{t\in[0,T]}$ is a filtration generated by $B^H$.
				\item Pathwise uniqueness: We say that pathwise
				uniqueness holds if for any two solutions $(X,B^H)$ and $(Y,B^H)$ defined on the same
				filtered probability space with the same fBm $B^H$ and the same initial condition $X_0\in\R^d$, $X$ and $Y$ are indistinguishable, i.e. $\mathbb{P}(X_t=Y_t,\forall\;t\in[0,T])=1$.
			\end{itemize}
		\end{definition}
		\begin{definition}
			$($Weighted Sobolev sapce$)$ Fie $p_1\in(1,\infty)$. Let $w:\R^d\mapsto[0,\infty)$ be a Borel-measurable function satisfying
			$$\int_{\R^d}(1+|x|^{p_1})w(x)\d x<\infty.$$
			Let $L^p(\R^d,w)$ denote the Banach space of all the Borel-measurable functions $f$ such that $\int_{\R^d}|f(x)|^{p_1}w(x)\d x<\infty$ equipped with the norm 
			$$\Vert f\Vert_{L^{p_1}(\R^d,w)}=\left(\int_{\R^d}|f(x)|^{p_1}w(x)\d x\right)^{\frac1{p_1}}.$$
			Then, the weighted Sobolev space with weight $w$ is defined as the linear space of functions $f\in L^{p_1}(\R^d,w)$ with weak partial derivatives $\frac{\partial}{\partial x_j}f\in L^{p_1}(\R^d,w)$ for $j=1,\cdots,d$. We equip this space with the complete norm
			$$\Vert f\Vert_{W^{1,p_1}(\R^d,w)}:=\Vert f\Vert_{L^{p_1}(\R^d,w)}+\sum_{i=1}^d\left\Vert\frac{\partial}{\partial x_i}f\right\Vert_{L^{p_1}(\R^d,w)}.$$
		\end{definition}
	\section{Preliminary}\label{sec2}
	We first introduce the basic framework of Malliavin calculus in this subsection. The reader can refer to \cite{biagini2008stochastic,nualart2006malliavin} for more details. Let $B^H=\{(B_t^{H,(1)},\cdots,B_t^{H,(d)})\}_{t\in[0,T]}$ be a $d$-dimensional $\mathcal{F}_t$-adapted fractional Brownian motion with Hurst parameter $H<1/2$. Let $\mathcal{E}$ be the space of $\R^d$-valued indicator functions on $[0,T]$. Then, the reproducing kernel Hilbert space $\mathcal{H}$ associated with $B$ is defined as the closure of $\mathcal{E}$ for the scalar product
	$$\langle(\mathbbm{1}_{[0,t_1]},\cdots,\mathbbm{1}_{[0,t_d]}),(\mathbbm{1}_{[0,s_1]},\cdots,\mathbbm{1}_{[0,s_d]})\rangle_{\mathcal{H}}=\sum_{i=1}^dR(t_i,s_i),\qquad0\leq t_i,s_i\leq T,i=1,\cdots,d$$
	where 
	$$R(t,s)=\frac12\left(s^{2H}+t^{2H}-|t-s|^{2H}\right).$$
	Consider the linear operator $\mathcal{K}_H^*$ from the space $\mathcal{E}$ to $L^2([0,T];\R^d)$ defined by
	$$(\mathcal{K}_H^*\phi)(s)=K_H(T,s)\phi(s)+\int_s^T(\phi(t)-\phi(s))\frac{\partial K_H}{\partial t}(t,s)\d t.$$
	We can see that $\mathcal{K}_H^*$ induces an isometry between $\mathcal{E}$ and $L^2([0,T];\R^d)$ that can be extended to the Hilbert space $\mathcal{H}$. From this, we define the Wiener integral $B^H(\phi)=\int_0^T\langle(\mathcal{K}_H^*\phi)(s),\d W_s\rangle_{\R^d}$. Denote by $\mathcal{S}_H$ the set of smooth cylindrical random variables of the form
	$$F=f\left(B^H(\phi_1),\cdots,B^H(\phi_n)\right)$$
	where $n\geq1$, $\phi_i\in\mathcal{H}$ and $f:\R^n\mapsto\R$ is a $C^\infty$ bounded function with bounded derivatives. Then, the Malliavin derivative with respect to fBm is given by
	$$\D^H_tF=\sum_{i=1}^n\phi_i(t)\frac{\partial}{\partial x_i}f\left(B^H(\phi_1),\cdots,B^H(\phi_n)\right),\qquad F\in\mathcal{S}_H.$$
	More generally, we can introduce iterated derivatives by $\D^{H,k}_{t_1,\cdots,t_k}F=\D^H_{t_1}\cdots \D^H_{t_k}F$. For $p_1\geq1$, we define $\mathbb{D}^{k,p}_H$ as the closure of the class of cylindrical random variables with respect to the norm
	$$\Vert F\Vert_{k,p_1}=\left(\E[|F|^{p_1}]+\sum_{i=1}^k\E\Big[\left\Vert \D^{H,k} F\right\Vert^{p_1}_{\mathcal{H}^{\otimes i}}\Big]\right)^{\frac1{p_1}}.$$ 
	and $\mathbb{D}^\infty_H:=\cap_{p\geq1}\cap_{k\geq1}\mathbb{D}^{k,p}$. It is well known that the Malliavin calculus is readily applicable with respect to $W$. To distinguish the Malliavin derivatives associated with $B^H$ and those with $W$, we denote the Malliavin derivative (and the corresponding Sobolev spaces) for $W$ by $\D$ (and by $\mathbb{D}^{k,p}$, respectively). The relation between the two operators $\D^H$ and $\D$ is given by the following (see e.g. \cite[Proposition 5.2.1]{nualart2006malliavin})
	\begin{proposition}
		For any $F\in \mathbb{D}^{1,2}=\mathbb{D}^{1,2}_H$, we have $\boldsymbol{\mathrm{D}} F=\mathcal{K}_H^*\boldsymbol{\mathrm{D}}^HF$.
	\end{proposition}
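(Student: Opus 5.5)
The identity $\D F=\mathcal{K}_H^*\D^HF$ is linear in $F$ and both sides depend continuously on $F$, so I will prove it on the cylindrical class $\mathcal{S}_H$ and then pass to the closure. The first step is to make explicit the link between the two Gaussian structures. By the definition of the Wiener integral, $B^H(\phi)=\int_0^T\langle(\mathcal{K}_H^*\phi)(s),\d W_s\rangle_{\R^d}=W(\mathcal{K}_H^*\phi)$ for every $\phi\in\mathcal{H}$. For indicators this is the representation $B^H_t=\int_0^tK_H(t,s)\d W_s$. To see this, check that $(\mathcal{K}_H^*\mathbbm{1}_{[0,t]})(s)=K_H(t,s)\mathbbm{1}_{[0,t]}(s)$: for $s<t$ one computes
$$K_H(T,s)-\int_t^T\frac{\partial K_H}{\partial r}(r,s)\d r=K_H(t,s).$$
Since $\mathcal{K}_H^*$ is an isometry from $\mathcal{E}$ onto its image in $L^2$, the identity extends to all of $\mathcal{H}$ by $L^2(\Omega)$-continuity of both Wiener integrals. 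Consequently every $F=f(B^H(\phi_1),\dots,B^H(\phi_n))\in\mathcal{S}_H$ is also a smooth cylindrical functional of $W$, namely $F=f(W(h_1),\dots,W(h_n))$ with $h_i=\mathcal{K}_H^*\phi_i\in L^2([0,T];\R^d)$.

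Next I apply the definition of the Malliavin derivative with respect to $W$ to this cylindrical representation:
$$\D_sF=\sum_{i=1}^n\partial_if\big(W(h_1),\dots,W(h_n)\big)\,h_i(s)=\sum_{i=1}^n\partial_if(\cdots)\,(\mathcal{K}_H^*\phi_i)(s).$$
The operator $\mathcal{K}_H^*$ is linear and the coefficients $\partial_if(\cdots)$ are random scalars not depending on $s$. So the right-hand side equals $\mathcal{K}_H^*\big(\sum_i\partial_if(\cdots)\phi_i\big)(s)=(\mathcal{K}_H^*\D^HF)(s)$, which proves the identity on $\mathcal{S}_H$.

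Finally comes the closure argument, which is the only step needing care. Because $\mathcal{K}_H^*:\mathcal{H}\to L^2([0,T];\R^d)$ is an isometry, we have
$$\E\|\D F\|_{L^2}^2=\E\|\mathcal{K}_H^*\D^HF\|_{L^2}^2=\E\|\D^HF\|_{\mathcal{H}}^2\qquad\text{for }F\in\mathcal{S}_H.$$
Hence the norms $\|\cdot\|_{1,2}$ computed with respect to $W$ and with respect to $B^H$ coincide on $\mathcal{S}_H$. The obstacle is to check that $\mathcal{S}_H$ is also dense in the $W$-Sobolev space $\mathbb{D}^{1,2}$. The key fact is that the image of $\mathcal{K}_H^*$ is all of $L^2([0,T];\R^d)$: the operator is unitary, since $B^H$ and $W$ generate the same filtration, equivalently since $K_H$ is invertible. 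Indeed, $W_t=\int_0^t(K_H^{-1}\cdots)$ can be recovered from $B^H$. Given this surjectivity, every cylindrical functional of $W$ belongs to $\mathcal{S}_H$ up to $L^2$-approximation of the $h_i$. So the two closures agree, $\mathbb{D}^{1,2}=\mathbb{D}^{1,2}_H$, and for $F_k\to F$ in this common space we get $\D F_k\to\D F$ and $\mathcal{K}_H^*\D^HF_k\to\mathcal{K}_H^*\D^HF$ in $L^2(\Omega;L^2)$. The identity therefore passes to the limit.
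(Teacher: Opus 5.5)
Your proof is correct and is the standard argument behind the result, which the paper does not prove but simply cites (Nualart's Proposition 5.2.1): you verify $\D F=\mathcal{K}_H^*\D^HF$ on smooth cylindrical functionals via $B^H(\phi)=W(\mathcal{K}_H^*\phi)$, then pass to the closure using that $\mathcal{K}_H^*$ is an isometry of $\mathcal{H}$ onto $L^2([0,T];\R^d)$. The surjectivity step you only sketch does hold: the image of an isometry on the complete space $\mathcal{H}$ is closed, and any $h$ orthogonal to it satisfies $\int_0^tK_H(t,s)h(s)\,\d s=0$ for all $t$, so $h=0$ by injectivity of $K_H$. Surjectivity is needed only for $\mathbb{D}^{1,2}=\mathbb{D}^{1,2}_H$; the identity on $\mathbb{D}^{1,2}_H$ itself already follows from the isometry and the closability of $\D$.
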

	\subsection{Density Estimate}
	
	In this subsection, we will use some techniques introduced in  \cite{baudoin2016probability,lou2017local} to derive estimates for the derivatives of the joint density of fBm. For a random variable $F$, $k,p_1\geq0$ and $t\in[0,T]$, we define the conditional Sobolev norm by 
	$$\Vert F\Vert_{k,p_1;t}=\left(\E_t[|F|^{p_1}]+\sum_{i=1}^k\E_t\Big[\left\Vert \D^i F\right\Vert^{p_1}_{(L_t^2)^{\otimes i}}\Big]\right)^{\frac1{p_1}}$$
	where we set $L_t^2:=L^2([0,T];\R^d)$. By convention, we write $\Vert F\Vert_{p;t}=\Vert F\Vert_{0,p;t}$. The conditional Malliavin matrix is given by $\Gamma_{G,t}=\left(\langle \D G^{(i)}, \D G^{(j)}\rangle\right)_{1\leq i,j\leq d}$, where $G=(G^{(1)},\cdots,G^{(d)})$ is a random vector with components in $\mathbb{D}^\infty$. 
	\begin{proposition}\label{by-part}
		$($\cite[Proposition 5.6]{baudoin2016probability} $)$ Fix $k\geq1$. Let $F=(F_1,\cdots,F_d)$ be a random vector and $G$ a random variable. Assume both $F$ and $G$ are smooth in the Malliavin sense and $(\det\Gamma_{F,s})^{-1}$ has finite moments of all orders. Then, for any $\alpha=(\alpha_1,\cdots,\alpha_k)\in\{1,\cdots,d\}^k$, 
		\begin{equation}
			\E_s\left[(\partial_{x_{\alpha_1}}\cdots\partial_{x_{\alpha_k}}\varphi)(F)G\right]=\E_s\left[\varphi(F)H_\alpha^s(F,G)\right],\qquad \forall\,\varphi\in C^\infty(\mathbb{R}^d),
		\end{equation}
		where $H_\alpha^s(F,G)$ is recursively defined by
		$$H_{(i)}^s(F,G)=\sum_{j=1}^d\delta_s\left(G\left(\Gamma_{F,s}^{-1}\right)_{i,j}\boldsymbol{\mathrm{D}}F_j\right),$$
		$$H_\alpha^s(F,G)=H_{\alpha_k}^s(F,H^s_{\alpha_1,\cdots,\alpha_{k-1}}(F,G)),$$
		where $\delta_s$ denotes the Skorokhod integral with respect to the Wiener process $W$ $($see \cite[Section 1.3.2]{nualart2006malliavin} $)$. Furthermore, the following norm estimate holds true
		\begin{equation}
			\Vert H_\alpha^s(F,G)\Vert_{r;s}\leq C_{p,q}\Vert \Gamma_{F,s}^{-1}\boldsymbol{\mathrm{D}}F\Vert^k_{k,2^{k-1}p_1;s}\Vert G\Vert^k_{k,2^{k-1}q_1;s},
		\end{equation}
		where $1/r=1/p_1+1/q_1.$
	\end{proposition}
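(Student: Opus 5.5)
The plan is to prove the integration by parts identity by induction on $k$, working conditionally on $\mathcal{F}_s$, and then to get the norm bound from the same recursion. Throughout, $\D$ is the Malliavin derivative with respect to $W$ restricted to $[s,T]$. Its adjoint is $\delta_s$, the Skorokhod integral over $[s,T]$, so that under $\E_s$ the duality reads
$$\E_s[\langle \D G,u\rangle_{L^2([s,T])}]=\E_s[G\,\delta_s(u)].$$
This holds because, conditionally on $\mathcal{F}_s$, the increments $\{W_r-W_s\}_{r\ge s}$ form a Brownian motion independent of $\mathcal{F}_s$. Classical Malliavin calculus therefore applies to functionals of these increments, with $\mathcal{F}_s$-measurable quantities treated as constants.

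\textbf{Case $k=1$.} By the chain rule,
$$\langle \D(\varphi(F)),\D F_j\rangle=\sum_l \partial_l\varphi(F)\,(\Gamma_{F,s})_{lj}.$$
Since $(\det\Gamma_{F,s})^{-1}$ has all moments, $\Gamma_{F,s}$ is a.s. invertible. This gives
$$\partial_i\varphi(F)=\sum_j(\Gamma_{F,s}^{-1})_{ij}\langle \D(\varphi(F)),\D F_j\rangle.$$
Multiply by $G$, take $\E_s$, and apply the conditional duality with $u=G(\Gamma_{F,s}^{-1})_{ij}\D F_j$. This yields exactly $\E_s[\varphi(F)H^s_{(i)}(F,G)]$.

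The only point to check is that $u\in\mathrm{Dom}\,\delta_s$. This follows once $\Gamma_{F,s}^{-1}$ has entries in $\mathbb{D}^\infty$. That in turn follows from the formula
$$\D(\Gamma^{-1})=-\Gamma^{-1}(\D\Gamma)\Gamma^{-1},$$
together with the moment assumption on $(\det\Gamma)^{-1}$, by approximating $(\det\Gamma+\epsilon)^{-1}$ and letting $\epsilon\to0$. For general $\varphi\in C^\infty$, truncate with smooth cutoffs, or simply restrict to $\varphi$ with polynomial growth, which is what is needed later.

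\textbf{Induction step.} Apply the $k=1$ identity to the function $\partial_{\alpha_1}\cdots\partial_{\alpha_{k-1}}\varphi$ with weight $G$. Then iterate using the recursive definition of $H^s_\alpha$; this requires $H^s_{\alpha_1,\dots,\alpha_{k-1}}(F,G)\in\mathbb{D}^\infty$. That property holds because $\delta_s$ maps smooth processes to smooth variables, by Meyer's inequalities.

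\textbf{Norm estimate.} The key tool is a conditional Meyer inequality,
$$\Vert\delta_s(u)\Vert_{m,r;s}\le C\Vert u\Vert_{m+1,r;s},$$
obtained by applying the standard Meyer inequality to the independent Brownian increments after $s$. The constant $C$ is universal, since the law of those increments does not depend on $s$. Combine this with Hölder's inequality for products in the conditional Sobolev norms,
$$\Vert GU\Vert_{m,r;s}\le C\Vert G\Vert_{m,q_1;s}\Vert U\Vert_{m,p_1;s},\qquad 1/r=1/p_1+1/q_1.$$
For $k=1$ this gives
$$\Vert H^s_{(i)}\Vert_{r;s}\le C\Vert G\,\Gamma^{-1}\D F\Vert_{1,r;s}\le C\Vert\Gamma^{-1}\D F\Vert_{1,p_1;s}\Vert G\Vert_{1,q_1;s}.$$
For general $k$, one inducts on the statement
$$\Vert H^s_\alpha\Vert_{m,r;s}\le C\Vert\Gamma^{-1}\D F\Vert^k_{m+k,2^{k-1}p_1;s}\Vert G\Vert^k_{m+k,2^{k-1}q_1;s}.$$
At each recursive step, Hölder is applied with exponents split in half, which produces the doubling $2^{k-1}$ of the integrability indices. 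Raising the powers to $k$ is harmless, since we may bound by norms that are at least $1$, or absorb lower powers accordingly. Taking $m=0$ gives the claim.

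I expect the main obstacle to be the conditional versions of duality and Meyer's inequality with constants uniform in $s$, since the product rule bookkeeping is routine. The way through is to write $F$ and $G$ as functionals $\Phi(\omega_{[0,s]},W_{\cdot}-W_s)$. Freezing the first argument reduces every conditional statement to the unconditional one on an independent Wiener space, and the constants transfer unchanged. This is essentially the argument of \cite[Proposition 5.6]{baudoin2016probability}, which one can cite for the details.
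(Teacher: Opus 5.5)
The integration-by-parts part of your proposal is the standard argument of \cite{baudoin2016probability}: conditional duality between $\D$ on $[s,T]$ and $\delta_s$, inversion of $\Gamma_{F,s}$, and induction on $k$. The paper simply cites that source without proof, so there is no independent argument to compare against. Your version is fine once $\varphi$ is restricted to polynomial growth, as you note.

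The gap is in the last step of the norm estimate. Carried out carefully, your induction makes $G$ enter exactly once. Put $u_i=(\Gamma_{F,s}^{-1}\D F)_i$, $\alpha'=(\alpha_1,\dots,\alpha_{k-1})$ and $1/a=1/q_1+1/(2p_1)$. Then
\[
\Vert H^s_\alpha(F,G)\Vert_{m,r;s}\le C\Vert H^s_{\alpha'}(F,G)\,u_{\alpha_k}\Vert_{m+1,r;s}\le C\Vert H^s_{\alpha'}(F,G)\Vert_{m+1,a;s}\Vert u_{\alpha_k}\Vert_{m+1,2p_1;s}.
\]
Applying the induction hypothesis to $H^s_{\alpha'}$ with the pair $(2p_1,q_1)$ gives
\[
\Vert H^s_\alpha(F,G)\Vert_{m,r;s}\le C\Vert \Gamma_{F,s}^{-1}\D F\Vert^k_{m+k,2^{k-1}p_1;s}\Vert G\Vert_{m+k,q_1;s}.
\]
The doubling of exponents only hits the $\Gamma_{F,s}^{-1}\D F$ factor, and $\Vert G\Vert$ appears to the first power.

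You then say that raising $\Vert G\Vert$ to the power $k$ is harmless because the norms may be taken $\ge 1$. That is not valid: $\Vert G\Vert_{k,q;s}$ can be arbitrarily small, and nothing can be absorbed. In fact the displayed estimate with $\Vert G\Vert^k$ is false for $k\ge2$, because $H^s_\alpha(F,\lambda G)=\lambda H^s_\alpha(F,G)$ is linear in $\lambda$ while that right side is of order $\lambda^k$.

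Concretely, take $d=1$, $F=W_T-W_s$, $G=\lambda$. Then
\[
H^s_{(1,1)}(F,G)=\lambda\bigl((W_T-W_s)^2(T-s)^{-2}-(T-s)^{-1}\bigr),
\]
whose conditional $L^r$ norm is $c_r\lambda(T-s)^{-1}$ with $c_r>0$. The stated right side equals $C\lambda^2(T-s)^{-1}$, since $\Gamma_{F,s}^{-1}\D F=\mathbbm{1}_{[s,T]}/(T-s)$ is deterministic. Letting $\lambda\to0$ violates the inequality.

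So you cannot complete a proof of the inequality as displayed; what you should state and prove is the bound linear in $\Vert G\Vert$ above. That bound implies the displayed one whenever $\Vert G\Vert_{k,2^{k-1}q_1;s}\ge1$, in particular for $G=1$. That is the only case used in the proof of Proposition~\ref{sec2-thm.1}, so the misstatement does no harm downstream.
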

	
	Now, we present the key proposition in this subsection, whose proof is similar to that in \cite[Theorem 2.4]{lou2017local}.
	\begin{proposition}\label{sec2-thm.1}
		Let $\tilde{p}_{t_1,\cdots,t_n}(x_1,\cdots,x_n)$ be the joint density of the random vector $(B_{t_1}^H,B_{t_2}^H-B_{t_1}^H,\cdots,B_{t_n}^H-B_{t_{n-1}}^H),0<t_1<\cdots<t_n$. Then, for any multi-indexes $\alpha_1,\cdots,\alpha_n\in\mathbb{N}^d$,
		\begin{align}\label{density-estimate}
			&\partial_{x_1}^{\alpha_1}\cdots\partial_{x_n}^{\alpha_n}\tilde{p}_{t_1,\cdots,t_n}(x_1,\cdots,x_n)\nonumber\\
			&\leq C\frac{1}{t_1^{(d+|\alpha_1|)H}}e^{-\frac{|x_1|^2}{4t_1^{2H}}}\frac{1}{(t_2-t_1)^{(d+|\alpha_2|)H}}e^{-\frac{|x_2|^2}{4(t_2-t_1)^{2H}}}\cdots\frac{1}{(t_n-t_{n-1})^{(d+|\alpha_n|)H}}e^{-\frac{|x_n|^2}{4(t_n-t_{n-1})^{2H}}}.
		\end{align}
	\end{proposition}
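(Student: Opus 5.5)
We will prove \eqref{density-estimate} through the integration by parts formula of Proposition \ref{by-part}. We apply it conditionally and iteratively, peeling off one increment at a time from the last to the first, in the spirit of \cite[Theorem 2.4]{lou2017local}.

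Write $F_1=B^H_{t_1}$ and $F_k=B^H_{t_k}-B^H_{t_{k-1}}$ for $k\ge2$. Since everything is Gaussian, the density $\tilde p$ exists and is smooth. By a Fourier/duality argument it suffices to bound
$$\E\Big[\prod_{k=1}^n(\partial^{\alpha_k}\varphi_k)(F_k)\Big]$$
for test functions $\varphi_k$, combined with the standard representation
$$\tilde p(x)=\E\Big[\prod_k \mathbbm 1_{\{F_k>x_k\}}H_{(1,\dots,d)}(F_k,\cdot)\Big].$$
The key structural input is local nondeterminism of fBm. Condition on $\mathcal F_{t_{k-1}}$, the filtration of the underlying $W$. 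Then
$$F_k=\int_{t_{k-1}}^{t_k}K_H(t_k,s)\,\d W_s+(\text{an }\mathcal F_{t_{k-1}}\text{-measurable term}).$$
Hence the conditional Malliavin matrix $\Gamma_{F_k,t_{k-1}}$ is deterministic and equals $\int_{t_{k-1}}^{t_k}K_H(t_k,s)^2\,\d s\, I_d$. For $H<1/2$ this quantity is bounded below by $c(t_k-t_{k-1})^{2H}$, because $K_H(t,s)\ge c(t-s)^{H-1/2}$ there. Consequently, $\|\Gamma_{F_k,t_{k-1}}^{-1}\D F_k\|_{m,p;t_{k-1}}\lesssim (t_k-t_{k-1})^{-H}$. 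Higher Malliavin derivatives vanish, since $F_k$ lies in the first chaos.

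We then argue by backward induction on $k$. At step $n$, conditionally on $\mathcal F_{t_{n-1}}$, we apply Proposition \ref{by-part} with $F=F_n$ and with $G$ the product of the (already handled) earlier factors, which is $\mathcal F_{t_{n-1}}$-measurable. Since $\D_s G=0$ for $s>t_{n-1}$, the weights $H^{t_{n-1}}_\alpha(F_n,1)$ factor out. Each derivative costs $(t_n-t_{n-1})^{-H}$, and the $d$ extra derivatives used to produce the density cost $(t_n-t_{n-1})^{-dH}$. This yields the factor $(t_n-t_{n-1})^{-(d+|\alpha_n|)H}$. We then take $\E_{t_{n-2}}$ and repeat.

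For the Gaussian factor $e^{-|x_k|^2/(4(t_k-t_{k-1})^{2H})}$ we use H\"older's inequality. We bound $\mathbb P_{t_{k-1}}(|F_k|\ge |x_k|)^{1/2}$ coordinatewise (restricting the indicator to the appropriate orthant). Conditionally, $F_k$ is Gaussian with variance at most $C(t_k-t_{k-1})^{2H}$, but its conditional mean is the random $\mathcal F_{t_{k-1}}$-measurable term. This is the main obstacle. The conditional mean is not small, so the tail bound holds only after recentering, and the unconditional density of increments requires handling that mean.

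To deal with it, we would exploit the fact that the full vector $(F_1,\dots,F_n)$ is a centered Gaussian vector. Its covariance matrix $\Sigma$ satisfies $\Sigma\ge \mathrm{diag}\big(c(t_k-t_{k-1})^{2H}\big)$ by local nondeterminism, as encoded above through the conditional variances. It is also bounded above by $C\,\mathrm{diag}$-type bounds through the negative correlation of increments for $H<1/2$ and Cauchy--Schwarz. The Gaussian exponent $x^{\mathrm T}\Sigma^{-1}x$ is then compared with $\sum_k|x_k|^2/(4(t_k-t_{k-1})^{2H})$. If this comparison fails with the constant $4$, we would settle for the tail decay obtained by H\"older halving, which produces the $4$ in place of $2$, combined with the Malliavin weight bounds above. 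Derivatives of the Gaussian density are then handled via Hermite-type polynomial factors absorbed into the slightly weakened exponential.
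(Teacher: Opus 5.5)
\textbf{Verdict: there is a genuine gap.} The proposal never proves the Gaussian factor, and the backward iteration does not go through past the last block. The covariance route in your last paragraph is a sound repair, but it is only sketched, and part of its stated justification is wrong.

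\textbf{Where the Malliavin iteration fails.} Your first half is the paper's own argument. Condition on $\mathcal F_{t_{n-1}}$ and integrate by parts with the deterministic conditional Malliavin matrix $\sigma_n^2 I_d$, where $\sigma_n^2=\int_{t_{n-1}}^{t_n}K_H(t_n,s)^2\,\d s\gtrsim (t_n-t_{n-1})^{2H}$; this gives the prefactor $(t_n-t_{n-1})^{-(d+|\alpha_n|)H}$. Then apply Cauchy--Schwarz against $\E_{t_{n-1}}[\mathbbm{1}_{\{F_n>x_n\}}]^{1/2}$ and iterate. The paper gets the exponential by bounding that conditional probability with a centred Gaussian tail and then ``repeating''. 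That is exactly the step you flag, and you are right that it fails. Given $\mathcal F_{t_{n-1}}$, $F_n$ is Gaussian with covariance $\sigma_n^2 I_d$ but with the random, unbounded mean $\mu_n=\int_0^{t_{n-1}}(K_H(t_n,s)-K_H(t_{n-1},s))\,\d W_s$. So no almost sure bound of the form $e^{-c|x_n|^2/(t_n-t_{n-1})^{2H}}$ holds.

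The same mean also breaks your claim that the weights factor out after the first step. Once block $n$ is integrated, what remains is a function of $\mu_n$. It is $\mathcal F_{t_{n-1}}$-measurable but not $\mathcal F_{t_{n-2}}$-measurable, so it has a nonzero Malliavin derivative on $(t_{n-2},t_{n-1}]$ and must be carried through the integration by parts for $F_{n-1}$. You cannot replace it by a deterministic bound while the signed factors $\partial^{\alpha_k}\varphi_k(F_k)$, $k<n$, are still inside the expectation. Your fallback (``settle for the tail decay obtained by H\"older halving'') is precisely this invalid step.

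\textbf{What the covariance route needs.} For $H<1/2$ this route can be carried out, but not with the ingredients as stated. Let $\Sigma$ be the $n\times n$ covariance of one coordinate of $(F_1,\dots,F_n)$, and let $D=\mathrm{diag}((t_k-t_{k-1})^{2H})$.

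(a) One-sided conditional variances control only the Cholesky diagonal: $\det\Sigma\ge\prod_k\mathrm{Var}(F_k\mid\mathcal F_{t_{k-1}})\gtrsim\prod_k(t_k-t_{k-1})^{2H}$. This yields the prefactor $\prod_k(t_k-t_{k-1})^{-dH}$, but not a Loewner bound $\Sigma\ge cD$. That bound is in fact false uniformly in $n$: with equal spacing and $v=(1,\dots,1)$, $v^{\mathrm T}\Sigma v=t_n^{2H}$ while $v^{\mathrm T}Dv=n^{1-2H}t_n^{2H}$. What the derivative powers actually need is $(\Sigma^{-1})_{kk}=\mathrm{Var}(F_k\mid F_j,\,j\neq k)^{-1}\lesssim(t_k-t_{k-1})^{-2H}$. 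This is a two-sided nondeterminism estimate; it follows, for example, from $\Vert f\Vert_{\mathcal H}\simeq\Vert f\Vert_{\dot H^{1/2-H}(\R)}$ for step functions extended by zero, together with the embedding $\dot H^{1/2-H}(\R)\hookrightarrow L^{1/H}(\R)$. The Hermite terms are then controlled by $|(\Sigma^{-1}x)_k|\le(\Sigma^{-1})_{kk}^{1/2}(x^{\mathrm T}\Sigma^{-1}x)^{1/2}$ and $|(\Sigma^{-1})_{jk}|\le((\Sigma^{-1})_{jj}(\Sigma^{-1})_{kk})^{1/2}$.

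(b) The constant $4$ comes from exactly $\Sigma\le 2D$, which you leave open. It does follow from negative correlation, but not through entrywise Cauchy--Schwarz, which gives only $\Sigma\le nD$. Use either the row-sum identity $\sum_{k\neq j}|\Sigma_{jk}|=\Sigma_{jj}-\mathrm{Cov}(F_j,B^H_{t_n})\le\Sigma_{jj}$, or Perron--Frobenius for the entrywise nonnegative matrix $I-D^{-1/2}\Sigma D^{-1/2}$, whose spectral radius is at most $1$ because $\Sigma\ge0$.

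(c) Since $\Sigma\le2D$ leaves no slack, absorbing the Hermite polynomials when some $\alpha_k\neq0$ turns $4$ into $4+\varepsilon$, unless you also prove $\Sigma\le(2-\delta)D$. This is harmless for how the estimate is used later, but it is not literally the stated bound.
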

	\begin{proof}
		Observe that $\partial_{x_1}^{\alpha_1}\cdots\partial_{x_n}^{\alpha_n}\tilde{p}_{t_1,\cdots,t_n}(x_1,\cdots,x_n)$ can be expressed as
		\begin{align}\label{sec2-eq.1}
			&\partial_{x_1}^{\alpha_1}\cdots\partial_{x_n}^{\alpha_n}\tilde{p}_{t_1,\cdots,t_n}(x_1,\cdots,x_n)\nonumber\\
			&\quad=\partial_{x_1}^{\alpha_1}\cdots\partial_{x_n}^{\alpha_n}\E\left[\hat{\delta}_{x_1}(B_{t_1}^H)\hat{\delta}_{x_2}(B_{t_2}^H-B_{t_1}^H)\cdots\hat{\delta}_{x_n}(B_{t_n}^H-B_{t_{n-1}}^H)\right]\nonumber\\
			&\quad=\partial_{x_1}^{\alpha_1}\cdots\partial_{x_{n-1}}^{\alpha_{n-1}}\E\Bigg[\hat{\delta}_{x_1}(B_{t_1}^H)\hat{\delta}_{x_2}(B_{t_2}^H-B_{t_1}^H)\cdots\E_{t_{n-1}}\left[\partial_{x_n}^{\alpha_n}\hat{\delta}_{x_n}(B_{t_n}^H-B_{t_{n-1}}^H)\right]\Bigg],
		\end{align}
		where $\hat{\delta}_x(y)=\hat{\delta}(y-x)$ is the Dirac function. Without loss of generality, we assume that each component of the $d$-dimensional vector $x_n$ is positive. As we all know, $\hat{\delta}_(y)=(-1)^d\partial_x\mathbbm{1}_{\{y>x\}}$. Therefore, by Proposition \ref{by-part} and the Cauchy-Schwarz inequality, we have
		\begin{align*}
			&\E_{t_{n-1}}\left[\partial_{x_n}^{\alpha_n}\hat{\delta}_{x_n}(B_{t_n}^H-B_{t_{n-1}}^H)\right]\\
			&\quad\leq \E_{t_{n-1}}\left[\mathbbm{1}_{\{B_{t_n}^H-B_{t_{n-1}}^H>x_n\}}H_{\alpha_n'}(B_{t_n}^H-B_{t_{n-1}}^H,1)\right]\\
			&\quad\leq \E_{t_{n-1}}\left[\mathbbm{1}_{\{B_{t_n}^H-B_{t_{n-1}}^H>x_n\}}\right]^{\frac12}\E_{t_{n-1}}\left[\left|H_{\alpha_n'}(B_{t_n}^H-B_{t_{n-1}}^H,1)\right|^2\right]^{\frac12}\\
			&\quad\leq\E_{t_{n-1}}\left[\mathbbm{1}_{\{B_{t_n}^H-B_{t_{n-1}}^H>x_n\}}\right]^{\frac12}\Vert\Gamma^{-1}_{B_{t_n}^H-B_{t_{n-1}}^H,t_{n-1}}\D(B_{t_n}^H-B_{t_{n-1}}^H)\Vert_{|\alpha_n|+d,2^{|\alpha_n|+d};t_{n-1}}^{|\alpha_n|+d},
		\end{align*}
		where $\alpha_n'=\alpha_n+\boldsymbol{1}$ and $\boldsymbol{1}\in\mathbb{N}^d$ is the all-ones vector. By the definition of $\Gamma_{F,s}^{-1}$ for a random vector $F$, we have that the random matrix $\Gamma^{-1}_{B_{t_n}^H-B_{t_{n-1}}^H,t_{n-1}}$ is in fact a deterministic diagonal matrix with diagonal entries 
		\begin{align*}
			\Vert \D(B_{t_n}^H-B_{t_{n-1}}^H)\Vert_{L^2_{t_{n-1}}}^2&=\int_{t_{n-1}}^{t_n}\left(K_H(t_n,\theta)-K_H(t_{n-1},\theta)\right)^2\d \theta\\
			&=Var(B_{t_n}^H-B_{t_{n-1}}^H|\mathcal{F}_{t_{n-1}})=Var(B_{t_n}^H|\mathcal{F}_{t_{n-1}}).
		\end{align*} 
		As a consequence, we have
		$$\Vert\Gamma^{-1}_{B_{t_n}^H-B_{t_{n-1}}^H,t_{n-1}}\D(B_{t_n}^H-B_{t_{n-1}}^H)\Vert_{|\alpha_n|+d,2^{|\alpha_n|+d};t_{n-1}}^{|\alpha_n|+d}=C_d\,Var(B_{t_n}^H|\mathcal{F}_{t_{n-1}})^{-|\alpha_n|-d}$$
		Then, by the local non-determinism $C(t_n-t_{n-1})^H\leq Var(B_{t_n}^H|\mathcal{F}_{t_{n-1}})\leq (t_n-t_{n-1})^{2H}$, we have
		\begin{align*}
			\E_{t_{n-1}}\left[\partial_{x_n}^{\alpha_n}\hat{\delta}_{x_n}(B_{t_n}^H-B_{t_{n-1}}^H)\right]&\leq \frac{C}{(t_n-t_{n-1})^{(d+|\alpha_n|)H}}\E_{t_{n-1}}\left[\mathbbm{1}_{\{B_{t_n}^H-B_{t_{n-1}}^H>x_n\}}\right]^{\frac12}\\
			&\leq \frac{(2\pi)^{-d}}{(t_n-t_{n-1})^{(d+|\alpha_n|)H}}\prod_{\ell=1}^d\left[\frac{1}{(t_n-t_{n-1})^H}\int_{x_n^{(\ell)}}^\infty e^{-\frac{y^2}{2(t_n-t_{n-1})^{2H}}}\d y\right]^{\frac12}
		\end{align*}
		For $x_n^{(\ell)}>0$, there exists a constant $C$ such that
		$$\frac{1}{(t_n-t_{n-1})^H}\int_{x_n^{(\ell)}}^\infty e^{-\frac{y^2}{2(t_n-t_{n-1})^{2H}}}\d y\leq C\exp\left(-\frac{|x_n^{(\ell)}|^2}{2(t_n-t_{n-1})^{2H}}\right),$$
		which yields that
		\begin{equation}\label{sec2-eq.2}
			\E_{t_{n-1}}\left[\partial_{x_n}^{\alpha_n}\hat{\delta}_{x_n}(B_{t_n}^H-B_{t_{n-1}}^H)\right]\leq C\frac{1}{(t_n-t_{n-1})^{(d+|\alpha_n|)H}}e^{-\frac{|x_n|^2}{4(t_n-t_{n-1})^{2H}}}.
		\end{equation}
		Taking \eqref{sec2-eq.2} into \eqref{sec2-eq.1} leads to
		\begin{align*}
			&\partial_{x_1}^{\alpha_1}\cdots\partial_{x_n}^{\alpha_n}\tilde{p}_{t_1,\cdots,t_n}(x_1,\cdots,x_n)\\
			& \quad\leq C\frac{1}{(t_n-t_{n-1})^{(d+|\alpha_n|)H}}e^{-\frac{|x_n|^2}{4(t_n-t_{n-1})^{2H}}}\\
			&\qquad\qquad\times\partial_{x_1}^{\alpha_1}\cdots\partial_{x_{n-1}}^{\alpha_{n-1}}\E\left[\hat{\delta}_{x_1}(B_{t_1}^H)\hat{\delta}_{x_2}(B_{t_2}^H-B_{t_1}^H)\cdots\hat{\delta}_{x_{n-1}}(B_{t_{n-1}}^H-B_{t_{n-2}}^H)\right].
		\end{align*}
		By repeating the above steps, we obtain the density estimate \eqref{density-estimate}. 
	\end{proof}
	\begin{remark}\label{sec2-rm.1}
		Denote by $p_{t_1,\cdots,t_n}(x_1,\cdots,x_n)$ the joint density of $(B_{t_1}^H,\cdots,B_{t_n}^H)$. Note that $p_{t_1,\cdots,t_n}(x_1,\cdots,x_n)=\tilde{p}_{t_1,\cdots,t_n}(x_1,x_2-x_1,\cdots,x_n-x_{n-1})$. Thus, Proposition \ref{sec2-thm.1} implies that 
		\begin{align*}
			&\partial_{x_1}^{\alpha_1}\cdots\partial_{x_n}^{\alpha_n}p_{t_1,\cdots,t_n}(x_1,\cdots,x_n)\nonumber\\
			&\leq C\frac{1}{t_1^{(d+|\alpha_1|)H}}e^{-\frac{|x_1|^2}{4t_1^{2H}}}\frac{1}{(t_2-t_1)^{(d+|\alpha_2|)H}}e^{-\frac{|x_2-x_1|^2}{4(t_2-t_1)^{2H}}}\cdots\frac{1}{(t_n-t_{n-1})^{(d+|\alpha_n|)H}}e^{-\frac{|x_n-x_{n-1}|^2}{4(t_n-t_{n-1})^{2H}}} 
		\end{align*}
	\end{remark}
	\begin{remark}\label{sec2-rm.2}
		Let $\tilde{p}_{t_1,\cdots,t_n}(x_1,\cdots,x_n|t_0)$ be the conditional joint density of the random vector $(B_{t_1}^H,B_{t_2}^H-B_{t_1}^H,\cdots,B_{t_n}^H-B_{t_{n-1}}^H)$ for $\mathcal{F}_{t_0}$, $0\leq t_0<t_1\cdots<t_n$. Then, with a minor modification to the proof of Theorem \ref{sec2-thm.1}, we can prove that 
		\begin{equation*}
			\partial_{x_1}^{\alpha_1}\cdots\partial_{x_n}^{\alpha_n}p_{t_1,\cdots,t_n}(x_1,\cdots,x_n|t_0)\leq C\prod_{j=1}^n(t_j-t_{j-1})^{-(d+|\alpha_j|)H}e^{-\frac{|x_j|^2}{4(t_j-t_{j-1})^{2H}}}.
		\end{equation*}
	\end{remark}
		\begin{corollary}\label{sec2-col.1}
		Assume that $b_j\in L^p(\R^d),p\geq1,\alpha_j\in\mathbb{N}^d,j=1,\cdots,m$. Then, for $0\leq s_0<s_1<\cdots<s_m$, we have
		\begin{equation}
			\left|\E_{s_0}\left[\prod_{j=1}^m\partial_{\alpha_j}b_j(B_{s_j}^H)\right]\right|\leq C_{H,d,p} \prod_{j=1}^m\Vert b_j\Vert_{L^p}(s_j-s_{j-1})^{-H|\alpha_j|-\frac{Hd}{p}}
		\end{equation}
		Here, we adopt the convention that $\E_0[\cdot]=\E[\cdot]$.
	\end{corollary}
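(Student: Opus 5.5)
The plan is to write the conditional expectation as an integral against the conditional joint density of $(B_{s_1}^H,\dots,B_{s_m}^H)$ given $\mathcal{F}_{s_0}$, move all derivatives onto the density by integration by parts, and then apply the density bound of Remark \ref{sec2-rm.2} (together with the change of variables of Remark \ref{sec2-rm.1}), followed by Hölder's inequality one variable at a time.

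\textbf{Step 1: reduce to smooth functions.} First assume $b_j\in C_c^\infty(\R^d)$. The general case then follows by density of $C_c^\infty$ in $L^p$ once the bound is established. For $p<\infty$ the left-hand side should be read through the right-hand side of \eqref{eq:ibp-form} below, which depends continuously on $b_j$ in $L^p$.

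\textbf{Step 2: integrate by parts.} Conditionally on $\mathcal{F}_{s_0}$, the vector $(B_{s_1}^H,\dots,B_{s_m}^H)$ is Gaussian with conditional mean $\mu_j=\E_{s_0}[B_{s_j}^H]$, which is $\mathcal{F}_{s_0}$-measurable, and with deterministic conditional covariance. Let $q(y_1,\dots,y_m)$ be the conditional density of the increments $(B_{s_1}^H-\mu_1,\ (B_{s_2}^H-\mu_2)-(B_{s_1}^H-\mu_1),\dots)$. Put $\tilde b_j(\cdot)=b_j(\cdot+\mu_j)$; translation preserves $L^p$ norms. Then
\begin{equation}\label{eq:ibp-form}
\E_{s_0}\Big[\prod_{j}\partial_{\alpha_j}b_j(B^H_{s_j})\Big]=\int_{(\R^d)^m}\prod_j\partial_{\alpha_j}\tilde b_j(z_j)\,q(z_1,z_2-z_1,\dots)\,\d z .
\end{equation}
Integrating by parts in each $z_j$ moves all derivatives onto the density. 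By the chain rule, $\partial_{z_j}$ applied to $q(z_1,z_2-z_1,\dots)$ produces derivatives in the $j$-th and $(j+1)$-th increment variables, so expanding gives a finite sum of terms of the form $\partial^{\beta_1}_{y_1}\cdots\partial^{\beta_m}_{y_m}q$. Here each $\beta_j$ receives part of $\alpha_j$ and part of $\alpha_{j-1}$, and $\sum_j|\beta_j|=\sum_j|\alpha_j|$.

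\textbf{Step 3: estimate each term.} By Remark \ref{sec2-rm.2} applied to the centered process, each term is bounded by $C\prod_j g_j(z_j-z_{j-1})$, where
\[
g_j(y)=(s_j-s_{j-1})^{-(d+|\beta_j|)H}e^{-|y|^2/(4(s_j-s_{j-1})^{2H})},\qquad z_0=0 .
\]
Integrate successively in $z_m,z_{m-1},\dots,z_1$, using Hölder's inequality in each variable:
\[
\int|\tilde b_j(z_j)|\,g_j(z_j-z_{j-1})\,\d z_j\le\Vert b_j\Vert_{L^p}\Vert g_j\Vert_{L^{p'}} ,
\qquad
\Vert g_j\Vert_{L^{p'}}\le C(s_j-s_{j-1})^{-|\beta_j|H-\frac{Hd}{p}} ,
\]
since $\Vert g_j\Vert_{L^{p'}}\le C(s_j-s_{j-1})^{-(d+|\beta_j|)H+Hd/p'}$. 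Each of these bounds is uniform in $z_{j-1}$, which is what allows the iteration.

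\textbf{Main obstacle.} The mixed derivatives $\beta_j\neq\alpha_j$ coming from the change of variables are the difficult point. They give factors $(s_j-s_{j-1})^{-|\beta_j|H}$ with $\beta_j$ borrowed from $\alpha_{j-1}$, instead of the claimed $\prod_j(s_j-s_{j-1})^{-H|\alpha_j|}$.

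\emph{First attempt:} avoid the mixing by integrating by parts in the increment variables $y_j=z_j-z_{j-1}$ directly. Writing $\partial_{\alpha_j}b_j(z_j)=\partial_{y_j}^{\alpha_j}$ applied to $b_j(y_1+\dots+y_j)$ lets each $\partial^{\alpha_j}$ be placed on the $y_j$-slot only, so the exponents match exactly. However, the function then also depends on $y_{j+1},\dots$ through the later $b$'s, so this works cleanly only after a Fubini reordering.

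\emph{Fallback:} if this reordering fails, accept the mixed exponents. When $s_{j}-s_{j-1}$ is comparable to or larger than $s_{j-1}-s_{j-2}$, the factors can be bounded up to constants. In general one reorganizes as in Davie's/Rezakhanlou's combinatorial argument, absorbing the discrepancy into the constant $C_{H,d,p}$ (which may depend on $m$ and the $\alpha_j$).
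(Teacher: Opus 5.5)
There is a genuine gap, and it is the step you flag as the main obstacle. Neither of your remedies works, and no remedy can, because the stated inequality is false in general when a derivative sits on a factor other than the last one.

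Your first attempt fails because integrating by parts in the increment variable $y_j$ also differentiates every $b_i(y_1+\cdots+y_i)$ with $i>j$. So part of $\alpha_j$ is forced into later slots whatever order you integrate in. Your fallback fails because the discrepancy factor $\bigl((s_j-s_{j-1})/(s_{j-1}-s_{j-2})\bigr)^{-H}$ is unbounded on the simplex. Davie/Rezakhanlou-type arguments never remove the redistribution pointwise; they only absorb it after integrating over the simplex.

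For a concrete counterexample take $d=1$, $m=2$, $s_0=0$, $s_1=1$, $s_2=1+\epsilon$, $\alpha_1=1$, $\alpha_2=0$, and $b_1=\phi(\lambda\,\cdot)$, $b_2=\psi(\lambda\,\cdot)$ with $\phi,\psi\in C_c^\infty(\R)$ and $\lambda=\epsilon^{-H}$. Put $X=\lambda B_1^H$ and $Y=\lambda(B_{1+\epsilon}^H-B_1^H)$. Then $Y\sim\mathcal{N}(0,1)$, and $X$ has density $\lambda^{-1}\rho(\cdot/\lambda)$ with $\rho$ the standard Gaussian density. Also $\mathrm{Cov}(X,Y)=\tfrac12\epsilon^{-2H}\bigl((1+\epsilon)^{2H}-1\bigr)-\tfrac12\to-\tfrac12$, so for bounded $x$ the law of $Y$ given $X=x$ tends to $\mathcal{N}(0,1)$. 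Hence
\[
\E\bigl[b_1'(B_1^H)\,b_2(B_{1+\epsilon}^H)\bigr]=\int_{\R}\phi'(x)\,\rho(x/\lambda)\,\E\bigl[\psi(x+Y)\,\big|\,X=x\bigr]\,\d x\;\longrightarrow\;\rho(0)\int_{\R}\phi'(x)\,(\psi*\rho)(x)\,\d x .
\]
This limit is nonzero for suitable $\phi,\psi$. For example, take $\psi\geq0$ supported in $(0,\infty)$ and $\phi$ a narrow nonnegative bump at $0$ with $\int\phi=1$; the limit is then approximately $-\rho(0)\int_0^\infty u\rho(u)\psi(u)\,\d u$. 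The claimed bound, however, is $C\Vert b_1\Vert_{L^p}\Vert b_2\Vert_{L^p}\epsilon^{-H/p}=C\lambda^{-1/p}\Vert\phi\Vert_{L^p}\Vert\psi\Vert_{L^p}\to0$ for every finite $p\geq1$. For $p=\infty$, take $b_1=\sin(\lambda\,\cdot)\chi$ and $b_2=\cos(\lambda\,\cdot)\chi$ with a fixed cutoff $\chi$; this gives a left-hand side of order $\lambda$ against a bounded right-hand side. The true expectation carries the singularity $(s_2-s_1)^{-H}$ of the derivative pushed into slot $2$, which is precisely your mixed term.

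What your Steps 1--3 do establish, granting Remark \ref{sec2-rm.2}, is the redistributed bound. It is a sum of at most $2^{\sum_{j<m}|\alpha_j|}$ terms $\prod_j\Vert b_j\Vert_{L^p}(s_j-s_{j-1})^{-H|\beta_j|-Hd/p}$, and it reduces to the stated form when $\alpha_1=\cdots=\alpha_{m-1}=0$.

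The paper's proof does not get around this either. It integrates $\prod_j\partial_{\alpha_j}b_j(x_j)$ against the increment density $\tilde p_{s_1,\dots,s_m}(x_1,\dots,x_m|s_0)$ as if the $x_j$ were increments, then applies the product bound of Remark \ref{sec2-rm.2}. Equivalently, Remark \ref{sec2-rm.1} transfers Proposition \ref{sec2-thm.1} to the position density while dropping exactly the chain-rule terms $-\partial_{y_{j+1}}$ that you found. It also ignores the random conditional mean, which your shift $\tilde b_j=b_j(\cdot+\mu_j)$ handles correctly. Consequently, later estimates that apply the corollary with a derivative on every factor, such as Lemma \ref{sec4-lm.2}, would have to be redone with the redistributed bound.
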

	\begin{proof}
		By the integration-by-part formula, we have
		\begin{align*}
			\left|\E_{s_0}\left[\prod_{j=1}^m\partial_{\alpha_j}b_j(B_{s_j}^H)\right]\right|&=\left|\int_{(\mathbb{R}^d)^m}\prod_{j=1}^m\partial_{\alpha_j}b_j(x_j)\tilde{p}_{t_1,\cdots,t_n}(x_1,x_2,\cdots,x_n|s_0)\d x\right|\\
			&=\left|\int_{(\mathbb{R}^d)^m}\prod_{j=1}^mb_j(x_j)\partial_{x_1^{\alpha_1}}\cdots\partial_{x_n^{\alpha_n}}\tilde{p}_{t_1,\cdots,t_n}(x_1,x_2,\cdots,x_n|s_0)\d x\right|.
		\end{align*}
		Then, combining Remark \ref{sec2-rm.2} with the H\"older inequality yields that
		\begin{align*}
			\left|\E_{s_0}\left[\prod_{j=1}^m\partial_{\alpha_j}b_j(B_{s_j}^H)\right]\right|&\leq\prod_{j=1}^m\Vert b_j\Vert_{L^p}\left(\int_{(\mathbb{R}^d)^m}|\partial_{x_1^{\alpha_1}}\cdots\partial_{x_n^{\alpha_n}}\tilde{p}_{t_1,\cdots,t_n}(x_1,x_2,\cdots,x_n|s_0)|^{p'}\d x\right)^{\frac1{p'}}\\
			&\leq \prod_{j=1}^m\frac{\Vert b_j\Vert_{L^p}}{(s_j-s_{j-1})^{(d+|\alpha_j|)H}}\prod_{j=1}^m\left(\int_{\mathbb{R}^d}e^{-\frac{p'|x_j|^2}{4(s_j-s_{j-1})^{2H}}}\d x_j\right)^{\frac1{p'}}\\
			&\leq C_{H,d,p} \prod_{j=1}^m\Vert b_j\Vert_{L^p}(s_j-s_{j-1})^{-H|\alpha_j|-\frac{Hd}{p}}.
		\end{align*}
		The proof is complete.
	\end{proof}
    
	\subsection{Compactness Criterion for $L^2$ Random Field}\label{sec2-2}
		The following result was established by \cite{da_prato1992compact}, which provides a compactness criterion for the random fields on $L^2(\Omega)$ space. This criterion plays a crucial role in establishing the strong well-posedness of the SDE \eqref{sec1-eq.1}.
	\begin{theorem}\label{sec2-thm.2}
		Assume $C>0$ and that the sequence of $\mathcal{F}_T$-measurable random variables $\{F_n\}_{n=1}^\infty\subset L^2(\Omega)$ satisfies the following three conditions:
		\begin{align*}
			&\sup_n\E\Vert F_n\Vert^2\leq C,\\
			&\sup_n\int_0^T\E\Vert\boldsymbol{\mathrm{D}}_{\theta}F_n\Vert^2\d \theta\leq C,
		\end{align*}
		and there exists a constant $\beta>0$ such that
		\begin{equation*}
			\sup_n\int_0^T\int_0^T\frac{\E\Vert\boldsymbol{\mathrm{D}}_{\theta}F_n-\boldsymbol{\mathrm{D}}_{\theta'}F_n\Vert^2}{|\theta-\theta'|^{1+2\beta}}\d \theta\leq C.
		\end{equation*}
		Then, the sequence $\{F_n\}_{n=1}^\infty$ is relatively compact in $L^2(\Omega)$.
	\end{theorem}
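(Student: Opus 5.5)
The plan is to prove compactness through the Wiener chaos decomposition with respect to the underlying Brownian motion $W$. Every $F_n\in L^2(\Omega,\mathcal{F}_T)$ has an expansion $F_n=\sum_{k\ge0}I_k(f_k^n)$, where $f_k^n\in L^2([0,T]^k)$ are symmetric kernels. (For $\R^d$-valued $W$ or $F_n$, one adds component indices; this changes nothing below.) By the isometry and the standard formula $\D_\theta I_k(f)=kI_{k-1}(f(\cdot,\theta))$, the first two hypotheses read
\begin{equation*}
\sup_n\sum_{k\ge0}k!\,\Vert f_k^n\Vert^2_{L^2([0,T]^k)}\le C,\qquad \sup_n\sum_{k\ge1}k\,k!\,\Vert f_k^n\Vert^2_{L^2([0,T]^k)}\le C.
\end{equation*}

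\textbf{Step 1: tails in the chaos order.} The second bound gives the uniform tail estimate
\begin{equation*}
\sup_n\sum_{k>N}k!\Vert f_k^n\Vert^2\le \frac{C}{N}.
\end{equation*}
Hence the projections $P_NF_n=\sum_{k\le N}I_k(f_k^n)$ approximate $F_n$ in $L^2(\Omega)$, uniformly in $n$. By a standard diagonal/total-boundedness argument, it therefore suffices to show that for each fixed $k$ the family $\{f_k^n\}_n$ is relatively compact in $L^2([0,T]^k)$. Indeed, a sequence that is relatively compact in every finite chaos, with uniformly small remainder, is totally bounded in $L^2(\Omega)$.

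\textbf{Step 2: the third hypothesis as fractional regularity of the kernels.} The difference $\D_\theta F_n-\D_{\theta'}F_n$ equals $\sum_k kI_{k-1}\big(f_k^n(\cdot,\theta)-f_k^n(\cdot,\theta')\big)$. Applying the isometry, the third hypothesis becomes
\begin{equation*}
\sup_n\sum_{k\ge1}k\,k!\int_{[0,T]^{k-1}}\int_0^T\int_0^T\frac{|f_k^n(\bar t,\theta)-f_k^n(\bar t,\theta')|^2}{|\theta-\theta'|^{1+2\beta}}\,\d\theta\,\d\theta'\,\d\bar t\le C.
\end{equation*}
This bounds, for each fixed $k$, the Gagliardo $W^{\beta,2}$-seminorm of $f_k^n$ in its last variable, uniformly in $n$. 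Since $f_k^n$ is symmetric, the same bound holds in every variable.

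\textbf{Step 3: compactness in a fixed chaos (main obstacle).} The remaining task is to pass from this separate-variable fractional regularity, together with the $L^2$ bound, to compactness in $L^2([0,T]^k)$. I will use the Kolmogorov--Riesz--Fr\'echet criterion on the bounded domain. For a shift $h=(h_1,\dots,h_k)$, I telescope $f(t+h)-f(t)$ into $k$ one-variable shifts. Each one-variable translation difference is controlled by averaging the Gagliardo integrand over a window of size $|h_i|$, which gives $\Vert f(\cdot+h_ie_i)-f\Vert_{L^2}\le C|h_i|^{\beta}[f]_{W^{\beta,2}_{(i)}}$ up to boundary effects. The boundary effects are handled by restricting to interior cubes and noting that thin boundary layers have small mass; that smallness follows from the uniform fractional bound via the Hardy-type inequality for $\beta$ small, or alternatively by even reflection. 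As an alternative, I could extend to the torus and use Fourier coefficients: the separate-variable $H^\beta$ bounds imply decay of $\sum_{|\xi|>R}|\hat f(\xi)|^2$ uniformly in $n$, which is Rellich compactness.

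Combining Steps 1--3 with a diagonal extraction over $k$ yields a subsequence of $\{F_n\}$ that is Cauchy in $L^2(\Omega)$. This proves that $\{F_n\}$ is relatively compact.
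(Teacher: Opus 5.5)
Your proof is correct and self-contained. The paper gives no proof of this statement; it quotes the corollary of Da Prato--Malliavin--Nualart, so the comparison is with their argument. That corollary comes from an abstract criterion: the set $\{G\in\mathbb{D}^{1,2}:\Vert G\Vert_{L^2(\Omega)}+\Vert C^{-1}\D G\Vert_{L^2(\Omega;L^2([0,T]))}\le c\}$ is relatively compact in $L^2(\Omega)$ for any compact self-adjoint $C$ with dense range. It is then applied to an operator $C$ that is diagonal in the Haar basis, together with a lemma bounding $\Vert C^{-1}g\Vert$ by the $W^{\beta,2}(0,T)$-norm of $g$.

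You avoid both the abstract operator and the Haar lemma. You work with the chaos kernels directly and get the uniform tail bound $\sum_{k>N}k!\Vert f_k^n\Vert^2\le C/N$ from the second hypothesis. You then read the third hypothesis as a uniform directional $W^{\beta,2}$ bound on each kernel, extend it to all variables by symmetry, and finish with a concrete fractional Rellich argument on the cube. Your translation estimate works cleanly because the cube is a product, so every intermediate point of the telescoping stays inside it.

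What your route buys is transparency: the derivative bound controls the high chaoses, and the fractional bound gives compactness inside each chaos. What the DMN route buys is generality: it works on any Gaussian space and for any regularity of $\D F$ that can be encoded by a compact operator on the underlying Hilbert space.

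One point you should state explicitly. Because $|\theta-\theta'|\le T$, the third hypothesis for $\beta$ implies it for every $\beta'\in(0,\beta)$. This is what lets you take $\beta<1/2$ before invoking the fractional Hardy inequality, and it is also why the criterion can be stated for arbitrary $\beta>0$.
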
 
	\subsection{Girsanov Transformation}
	Girsanov's theorem for fBm was first established by \cite{nualart2002regularization}. Different from \cite[Theorem 2]{nualart2002regularization}, we will introduce Kazamaki's version of Girsanov's theorem. Before we state it explicitly, we first recall some basic definitions on fractional calculus. More details can be found in \cite{kilbas2006theory}. Let $\alpha\in(0,1)$. For $f\in L^p([a,b])$ with $1\leq p<1/\alpha$, we define
	\begin{equation*}
		(I^\alpha_{a+}f)(x)=\frac1{\Gamma(\beta)}\int_a^xf(t)(t-x)^{\alpha-1}\d t
	\end{equation*}
	as the \textit{left-sided Riemann-Liouville fractional integral} of order $\alpha$. On the other hand, the fractional derivative was introduced as the inverse operation. Denote by $I^\alpha_{a+}(L^p)$ the image of $L^p([a,b])$ by the operator $I^\alpha_{a+}$. Then, the \textit{left-sided Riemann-Liouville fractional derivative} of order $\alpha$
	is defined by
	\begin{equation*}
		(D^\alpha_{a+}f)(x)=\frac1{\Gamma(1-\alpha)}\frac{\d }{\d x}\int_a^xf(t)(t-x)^{\alpha-1}\d t,
	\end{equation*}
	where $f\in I^d_{a+}(L^p)$. Consider the linear operator $K_H$ on $L^2([0,T];\R^d)$ defined by $(K_H\phi)(t):=\int_0^tK_H(t,s)\phi(s)\d s$. With the notation introduced above, $K_H$ admits the following representation: $(K_H\phi)(s)=I_{0+}^{2H}s^{1/2-H}I_{0+}^{1/2-H}s^{H-1/2}\phi,\forall\phi\in L^2([0,T];\R^d)$. Recall by construction that $D^{\alpha}_{a+}(I^\alpha_{a+}f)=f,\forall f\in L^p([a,b])$. Hence, the inverse of $K_H$ is given by
	$$(K_H^{-1}\phi)(s)=s^{\frac12-H}D_{0+}^{\frac12-H}s^{H-\frac12}D_{0+}^{2H}\phi,\qquad\phi\in I_{0+}^{H+\frac12}(L^2([0,T];\R^d)).$$
	
	Let $W$ be a standard Brownian motion on the filtered probability space $(\Omega,\mathcal{F},\mathbb{P},(\mathcal{F})_{t\in[0,T]})$ and let $M_t=\int_0^tu_s\d W_s$ where $u_s$ is a measurable process and $L^2$-integrable. Set 
	$$\xi_t=\exp\left(M_t-\frac12[M,M]_t\right),$$
	where $[M,M]_t$ denotes the quadratic variation of the process $M$. Classical Girsanov's theo-rem \cite{girsanov1960transforming} tells us that if $\xi_t$ is a martingale, i.e., $\E[\xi_T]=1$, then a probability measure $\tilde{\mathbb{P}}$ can be defined on $(\Omega,\mathcal{F})$ such that the Radon-Nikodym derivative of $\tilde{\mathbb{P}}$ with respect to $\mathbb{P}$ satisfies $\d \tilde{\mathbb{P}}=\xi_T\d \mathbb{P}$, and the shifted process $\tilde{W_t}=W_t-\int_0^tu_s\d s$ is a $\mathcal{F}^{W}$-Brownian motion under the new probability $\tilde{\mathbb{P}}$. To apply Girsanov's theorem, a natural question is how to verify that $\xi_t$ is a martingale. More generally, if we only assume $M_t$ to be a continuous martingale with $M_0=0$, many sufficient conditions have been proposed to verify the martingale property $\xi_t$.
	\begin{enumerate}
		\item Novikov \cite{novikov1973identity}: $\E\exp([M,M]_T/2)<\infty\Rightarrow \E[\xi_T]=1$.
		\item Kazamaki \cite{kazamaki1977problem}: $\sup_{t\in[0,T]}\E\exp(M_t/2)<\infty\Rightarrow \E[\xi_T]=1$.
		\item Krylov \cite{krylov2002simple}: $\lim\limits_{\varepsilon\downarrow0}\varepsilon\log\E \exp((1-\varepsilon)[M,M]_T/2)<\infty\Rightarrow\E[\xi_T]=1$.
		\item Krylov \cite{krylov2002simple}: $\lim\limits_{\varepsilon\downarrow0}\varepsilon\log\sup_{t\in[0,T]}\E \exp((1-\varepsilon)M_t/2)<\infty\Rightarrow\E[\xi_T]=1$.
	\end{enumerate}
	
	Now, we present Kazamaki's version of Girsanov's theorem for fBm.
	\begin{proposition}\label{sec2-prop.1}
		Let $B^H$ be a $\mathcal{F}_t$-fractional Brownian motion with the Hurst parameter $H<1/2$. Consider the shifted process
		$\tilde{B}_t^H=B_t^H+\int_0^tu_s\d s$,
		where $(u_s)_{s\in[0,T]}$ is an $\mathcal{F}_t$-adapted process with integrable trajectories. Assume that 
		\begin{enumerate}
			\item $\int_0^{\cdot}u_s\d s\in I_{0+}^{H+1/2}(L^2([0,T];\R^d))$ almost surely;
			\item There exists a constant $C>0$ such that
			$$\sup_{t\in[0,T]}\E\exp\left(-\frac12\int_0^tK_H^{-1}\left(\int_0^{\cdot}u_r\d r\right)(s)\d W_s\right)\leq C$$
		\end{enumerate}  
		Then, the shifted process $\tilde{B}_t^H$ is a $\mathcal{F}_t^{B^H}$-fractional Brownian motion with the Hurst parameter $H$ under a new probability $\tilde{\mathbb{P}}$ defined by $\d\tilde{ \mathbb{P}}=\xi_T\d \mathbb{P}$, where 
		\begin{equation}\label{Girsanov}
			\xi_T=\exp\left(-\int_0^TK_H^{-1}\left(\int_0^{\cdot}u_r\d r\right)(s)\d W_s-\frac12\int_0^T\left|K_H^{-1}\left(\int_0^{\cdot}u_r\d r\right)(s)\right|^2\d s\right).
		\end{equation}
	\end{proposition}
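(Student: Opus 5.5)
The plan is to reduce Proposition \ref{sec2-prop.1} to the classical Girsanov theorem for the underlying Wiener process $W$, following the scheme of \cite[Theorem 2]{nualart2002regularization}. The only change is that Novikov's criterion is replaced by Kazamaki's.

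\textbf{Setting up the drift in the Wiener picture.} By assumption 1, the process $v_s:=K_H^{-1}\left(\int_0^{\cdot}u_r\d r\right)(s)$ is well defined and lies in $L^2([0,T];\R^d)$ almost surely. I will check that $v$ is $\mathcal{F}_t$-adapted. This follows from the Volterra structure of $K_H^{-1}=s^{\frac12-H}D_{0+}^{\frac12-H}s^{H-\frac12}D_{0+}^{2H}$: each left-sided Riemann--Liouville operator evaluated at time $s$ involves only the values of its argument on $[0,s]$, and $u$ is adapted. Since $v\in L^2$ almost surely, the stochastic integral $M_t:=-\int_0^tv_s\d W_s$ is a continuous local martingale with $M_0=0$.

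\textbf{Verifying the martingale property.} Assumption 2 says exactly that $\sup_{t\in[0,T]}\E\exp(M_t/2)\leq C<\infty$. Kazamaki's criterion then gives $\E[\xi_T]=1$ for $\xi_T=\exp(M_T-\frac12[M,M]_T)$, and $[M,M]_T=\int_0^T|v_s|^2\d s$, which is \eqref{Girsanov}. The criterion is stated above for continuous martingales, but Kazamaki's theorem is valid for continuous local martingales (via the submartingale property of $\exp(M_t/2)$ along a localizing sequence), so no extra integrability of $v$ is needed. This step is the main point where care is required. Hence $\tilde{\mathbb{P}}$ with $\d\tilde{\mathbb{P}}=\xi_T\d\mathbb{P}$ is a probability measure. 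By the classical Girsanov theorem, $\tilde W_t:=W_t+\int_0^tv_s\d s$ is an $\mathcal{F}_t$-Brownian motion under $\tilde{\mathbb{P}}$, since the drift of $M$ is $-v$.

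\textbf{Transferring back to fBm.} Using $B^H_t=\int_0^tK_H(t,s)\d W_s$ and the identity $(K_H\phi)(t)=\int_0^tK_H(t,s)\phi(s)\d s$, I compute
$$\int_0^tK_H(t,s)\d\tilde W_s=B_t^H+(K_Hv)(t)=B_t^H+K_HK_H^{-1}\left(\int_0^{\cdot}u_r\d r\right)(t)=B_t^H+\int_0^tu_r\d r=\tilde B_t^H.$$
Here $K_HK_H^{-1}=\mathrm{Id}$ on $I_{0+}^{H+1/2}(L^2)$, which follows from $D^\alpha_{a+}I^\alpha_{a+}=\mathrm{Id}$ together with the factorization of $K_H$. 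The first equality needs the deterministic kernel integral against $\d\tilde W$ to split into its $\d W$ part and its $v\,\d s$ part. This is immediate because $K_H(t,\cdot)\in L^2$ and $v\in L^2$ almost surely, so the Lebesgue integral $\int_0^tK_H(t,s)v_s\d s$ converges.

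\textbf{Conclusion.} The previous step writes $\tilde B^H$ as the Volterra transform of a $\tilde{\mathbb{P}}$-Brownian motion $\tilde W$ with the same kernel $K_H$. By the definition of $\mathcal{F}_t$-fractional Brownian motion, $\tilde B^H$ is therefore a fractional Brownian motion with Hurst parameter $H$ under $\tilde{\mathbb{P}}$, adapted to the relevant filtration. Its natural filtration coincides with that of $\tilde W$, because $K_H$ is invertible on the path space, and this gives the filtration claim. The remaining work is routine bookkeeping of the operator identities.
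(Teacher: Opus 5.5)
Your proof follows the paper's proof: classical Girsanov for $W$ with Kazamaki's criterion applied to $M=-\int_0^\cdot v_s\,\d W_s$, where $v=K_H^{-1}(\int_0^\cdot u_r\,\d r)$, followed by $\int_0^tK_H(t,s)\,\d\tilde W_s=B^H_t+(K_Hv)(t)=\tilde B^H_t$. The details you add are correct: causality of $K_H^{-1}$ gives adaptedness of $v$, $K_HK_H^{-1}=\mathrm{Id}$ on $I_{0+}^{H+1/2}(L^2)$, and the Wiener integral against $\d\tilde W$ splits as claimed.

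\textbf{The gap.} The step that does not hold as written is the one you flagged yourself. Under the stated hypotheses $v\in L^2$ only almost surely, so $M$ is only a local martingale. Your justification ``via the submartingale property of $\exp(M_t/2)$ along a localizing sequence'' does not go through.
Each stopped process $\exp(M_{t\wedge\tau_n}/2)$ is indeed a submartingale. But letting $n\to\infty$ in $\exp(M_{s\wedge\tau_n}/2)\le\E[\exp(M_{t\wedge\tau_n}/2)\mid\mathcal F_s]$ requires uniform integrability of $\{\exp(M_{t\wedge\tau_n}/2)\}_n$. That is control at stopping times, and a bound at deterministic times does not give it. Fatou's lemma yields the inequality in the useless direction.
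The versions of Kazamaki's criterion valid for local martingales assume either that $\exp(M/2)$ is a uniformly integrable submartingale, or that $\sup_\tau\E\exp(M_\tau/2)<\infty$ over bounded stopping times $\tau$.

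\textbf{The repair.} The simplest fix is to make $M$ a true martingale, for example by requiring $\E\int_0^T|v_s|^2\,\d s<\infty$.
Then conditional Jensen gives $\exp(M_t/2)\le\E[\exp(M_T/2)\mid\mathcal F_t]$, and $\exp(M_T/2)\in L^1$ by hypothesis 2 at $t=T$. So $\exp(M/2)$ is a uniformly integrable submartingale on $[0,T]$, and Kazamaki applies.
This matches how the paper states Kazamaki's criterion, namely for martingales. It also matches the paper's only use of the proposition: there, Lemma \ref{sec3-lm.1} with $n=1$ gives exactly $\E\int_0^T|v_s|^2\,\d s<\infty$.
The paper's proof passes over this point silently. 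Your instinct to address it was right, but the fix is an extra hypothesis (or the stopping-time form of condition 2), not localization.

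\textbf{A smaller point.} What the argument actually produces is that $\tilde W$ is an $(\mathcal F_t)$-Brownian motion under $\tilde{\mathbb P}$, hence $\tilde B^H$ is an $(\mathcal F_t)$-fBm. The coincidence of the natural filtrations of $\tilde B^H$ and $\tilde W$ does not give the $\mathcal F^{B^H}_t$ version of the conclusion. That version needs $u$ to be $\mathcal F^{B^H}$-adapted, which holds in the application $u_s=b(s,B^H_s)$.
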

	\begin{proof}
		$\int_0^{\cdot}u_s\d s\in I_{0+}^{H+1/2}(L^2([0,T];\R^d))$ implies that $K_H^{-1}(\int_0^{\cdot}u_r\d r)(s)$ is square integrable. Then, by applying the classical Girsanov's theorem and Kazamaki's condition to the process $-K_H^{-1}(\int_0^{\cdot}u_r\d r)(s)$, we obtain that $\tilde{W}_t=W_t+\int_0^tK_H^{-1}(\int_0^{\cdot}u_r\d r)(s)\d s$ is a $\mathcal{F}_t^W$-Brownian motion under the new probability $\tilde{\mathbb{P}}$. Hence,
		$$\tilde{B_t^H}=B_t^H+\int_0^tu_r\d r=\int_0^tK_H(t,s)\d \tilde{W}_s$$
		is a $\mathcal{F}_t^{B^H}$-fractional Brownian motion. The proof is complete.
	\end{proof}
	\section{Weak Existence}\label{sec3}
	To construct a weak solution, we apply Kazamaki's version of the Girsanov transformation. To this end, the following quantitative bound will be important.
	\begin{lemma}\label{sec3-lm.1}
		Assume \eqref{cond1}, $p,q\geq2$ and $H<1/2$. Then, for $b(t,\cdot)\in L^p_xL^q_t$, there exists a constant $C=C(H,d,p,q,T)$ such that
		\begin{align}\label{sec3-eq.1}
			J_n:=\E\left[\int_0^Ts^{2H-1}\left|\int_0^s(s-r)^{-\frac12-H}r^{\frac12-H}b(r,B_r^H)\d r\right|^2\d s\right]^n\leq C^{2n}\Vert b\Vert_{L^p_xL^q_t}^{2n}n^{2(1-\kappa+\varepsilon)n}
		\end{align}
		where $\kappa=1-H-Hd/p-1/q>0$ and $0<\varepsilon<\kappa$.
	\end{lemma}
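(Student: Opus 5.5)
We plan to expand the $n$-th power of the $\d s$-integral and reduce everything to the moment bounds of Corollary \ref{sec2-col.1}. Write
$$\Phi(s):=s^{2H-1}\Big|\int_0^s(s-r)^{-\frac12-H}r^{\frac12-H}b(r,B_r^H)\d r\Big|^2 .$$
Then
$$J_n=\int_{[0,T]^n}\E\Big[\prod_{i=1}^n\Phi(s_i)\Big]\d s_1\cdots\d s_n .$$
Each $\Phi(s_i)$ is a double integral in $(r_i,r_i')$, so the integrand becomes a $2n$-fold time integral
$$\int_{[0,s_1]^2\times\cdots\times[0,s_n]^2}\prod_i s_i^{2H-1}(s_i-r_i)^{-\frac12-H}(s_i-r_i')^{-\frac12-H}(r_ir_i')^{\frac12-H}\,\E\Big[\prod_{k=1}^{2n}b(u_k,B^H_{u_k})\Big]\d r\,\d r' ,$$
where $(u_1,\dots,u_{2n})$ lists all the $r_i,r_i'$. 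Since $|b|$ can replace $b$, we work with nonnegative $b$.

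We order the $2n$ times $u_{(1)}<\cdots<u_{(2n)}$ (at most $(2n)!$ orderings) and apply Corollary \ref{sec2-col.1} with all $\alpha_j=0$ and $b_j=b(u_{(j)},\cdot)$. This gives
$$\E\prod_k b(u_k,B^H_{u_k})\le C^{2n}\prod_{j=1}^{2n}\Vert b\Vert_{L^p_x}(u_{(j)})\,(u_{(j)}-u_{(j-1)})^{-\frac{Hd}{p}} .$$

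Next we remove the kernel in $s$. For fixed $r,r'\le s$, the weight $s^{2H-1}(s-r)^{-\frac12-H}(s-r')^{-\frac12-H}$ is not integrable near $s=r\vee r'$, because $-1-2H+2H-1<-1$. So the $s$-integration cannot be done naively. We instead keep the inner structure: for each $i$, bound $(r_i)^{\frac12-H}\le T^{\frac12-H}$, and use the fact that $\int_0^s(s-r)^{-\frac12-H}\,g(r)\d r$ with $\frac12+H<1$ is a fractional integral. We would apply Hölder in $r$ with exponent $q$ on $\Vert b\Vert_{L^p_x}(r)$ and exponent $q'$ on the product of singular kernels $(s_i-r)^{-\frac12-H}(u_{(j)}-u_{(j-1)})^{-Hd/p}$. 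The exponent condition $q'(\frac12+H)<1$ is not automatic, so the cleaner route is to integrate out the singular kernels in $u$ iteratively, as in Dirichlet/simplex integrals. Each of the $2n$ time variables carries a singular power: $Hd/p$ from the density, $\frac12+H$ from the fractional kernel, and $1/q$ coming from Hölder with $\Vert b\Vert_{L^p_x}(\cdot)\in L^q_t$. The sum of these stays below $1$ for a pair of variables sharing one $s_i$ precisely when $\kappa>0$, after we pair off the $s_i^{2H-1}$ factor against the two $(s_i-\cdot)^{-\frac12-H}$ factors via the substitution $r=s\rho$. That substitution gives $\int_0^1(1-\rho)^{-\frac12-H}\rho^{\frac12-H}\dots\d\rho$, a Beta-type integral that converges.

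The simplex integration yields a Dirichlet-type bound
$$\frac{\prod_j\Gamma(\text{exponent}_j)}{\Gamma\big(1+\sum_j \text{exponent}_j\big)},$$
with the total exponent of order $2n\kappa$ (up to $\varepsilon$ lost in Hölder). The $\Gamma(1+2n\kappa')$ in the denominator with $\kappa'=\kappa-\varepsilon$ then defeats most of the combinatorial factor $(2n)!\sim(2n)^{2n}$, leaving $n^{2n(1-\kappa+\varepsilon)}$ by Stirling. This is exactly the claimed rate, and the constant absorbs $C^{2n}\Vert b\Vert^{2n}_{L^p_xL^q_t}$.

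The main obstacle is the middle step: the $(s_i-r_i)^{-\frac12-H}$ kernels tie two $u$-variables to an external $s_i$ and interleave with the ordering of the $u$'s. The iterated simplex integration therefore does not factor directly. The first attempt will be to apply Hölder across the $\d s$ and the $\d r$ products separately, taking the $L^{q}$ norm in time of $\Vert b\Vert_{L^p_x}$ and a small loss $\varepsilon$. This reduces the problem to bounding
$$\int_{\Delta^{2n}}\prod_j (u_{(j)}-u_{(j-1)})^{-\gamma}\d u\le \frac{C^{2n}}{\Gamma(2n(1-\gamma)+1)},\qquad \gamma=\frac{Hd}{p}+\frac1q+\varepsilon<1-H ,$$
with the remaining $H$ of room absorbing the fractional kernel after the Beta-integral in $\rho$.
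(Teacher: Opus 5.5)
There is a genuine gap. The step that carries the lemma, namely controlling the fractional kernels together with the ordered density bound using only \eqref{cond1}, is left as a plan, and the plan as stated does not work.

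The $s$-integration is not the obstruction you describe. For $r\neq r'$ the weight is integrable at $s=r\vee r'$. Since $r,r'<s$ and $H<\frac12$, one has $s^{2H-1}(rr')^{\frac12-H}\le 1$ and $\int_{r\vee r'}^T(s-r)^{-\frac12-H}(s-r')^{-\frac12-H}\,\d s\lesssim|r-r'|^{-2H}$. So integrating out the $s_i$ simply leaves $\prod_i|r_i-r_i'|^{-2H}$ times the bound from Corollary \ref{sec2-col.1}. By contrast, the Beta substitution $r=s\rho$ cannot be carried out on its own. The integrand still contains $\Vert b\Vert_{L^p_x}(s\rho)$, which is only in $L^q_t$, and the gap factors $(u_{(j)}-u_{(j-1)})^{-Hd/p}$, which depend on where $s\rho$ falls among the other $2n-1$ times.

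More seriously, no pointwise splitting followed by H\"older on the ordered simplex (Lemma \ref{appendix-lm.3}) can give a uniform exponent $\gamma=\frac{Hd}p+\frac1q+\varepsilon$ per gap with ``$H$ of room'' left for the kernel. Whenever $r_i$ and $r_i'$ are adjacent in the ordering, $|r_i-r_i'|^{-2H}$ is a single gap to the power $-2H$. Keeping $s_i$ in the ordering does not help, because integrability in $s_i$ caps how much of $(s_i-r_i)^{-\frac12-H}$ can be moved onto $s_i-r_i'$. That gap therefore carries $2H+\frac{Hd}p$, and H\"older in time requires $2H+\frac{Hd}p+\frac1q<1$. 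This is strictly stronger than \eqref{cond1} (take $H=0.3$, $q=2$, $p$ large) and already fails for $n=1$. Your fallback of H\"older directly on $(s_i-r)^{-\frac12-H}$ needs $q'(\frac12+H)<1$, as you noticed. The loss is an artifact of H\"older: the convolution structure in time has to be used.

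The missing idea is a multiscale splitting that makes each $r$-variable carry only $H+\varepsilon$ of kernel singularity, and this is what the paper does.

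\emph{Set-up.} It first uses $r^{\frac12-H}\le s^{\frac12-H}$ to remove the power weights. It then cuts $[0,T]$ at $t_i=2^{-i}T$ and, for $s\in[t_i,t_{i-1}]$, writes $\int_0^s=\int_{t_i}^s+\int_0^{t_i}$. Products over different blocks are decoupled by conditioning on $\mathcal F_{t_i}$ block by block.

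\emph{Local pieces $A_i$.} These are handled by a convolution (Young) inequality in $s$, which trades the kernel for a power of the block length $2^{-i}T$. Corollary \ref{sec2-col.1} (conditioned on $\mathcal F_{t_i}$) and Lemma \ref{appendix-lm.3} give the complementary power for the moments of $\int_{t_i}^{t_{i-1}}|b(r,B^H_r)|\,\d r$. The net factor is $2^{-2i\kappa}$ per block, which is summable in $i$. One correction to the paper's choice of exponents: since $u^{-\frac12-H}\notin L^2(0,h)$, the kernel must be put in $L^{p_1}$ with $p_1(\frac12+H)<1$, not in $L^2$ as written. Correspondingly $|b(r,B^H_r)|$ goes in $L^{q_1}$ with $\max\{\frac12,\frac{Hd}p+\frac1q\}<\frac1{q_1}=\frac32-\frac1{p_1}<1-H$, which \eqref{cond1} permits, and the net exponent is still $2\kappa$.

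\emph{Non-local pieces $B_i$.} Here every $s$ lies to the right of the block containing the $r$'s. Hence $(s-r)^{-\frac12-H}\le(s-t_\ell)^{-\frac12+\varepsilon}(r_{\mathrm{next}}-r)^{-H-\varepsilon}$, which places exactly $H+\varepsilon$ on each $r$'s own forward gap (for the last $r$, the gap up to the block endpoint). The $s$-integrals $\int(s-t_\ell)^{-1+2\varepsilon}\,\d s$ are finite. Lemma \ref{appendix-lm.3} then only needs $q'(H+\frac{Hd}p+\varepsilon)<1$, i.e.\ $\varepsilon<\kappa$. The block-selection count $S_m=(2m-1)!!$ together with Stirling gives $n^{2(1-\kappa+\varepsilon)n}$, which is where the $\varepsilon$ in the statement comes from.
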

	\begin{proof}
		The proof will be divided into four steps.
		\\[2pt] \textbf{Step 1}. Decompose $J_n$. Set $t_i=2^{-i}T$. Then, $J_n$ can be decomposed as follows.
		\begin{align}\label{sec3-eq.2}
			J_n&\leq\E \left[\sum_{i=1}^\infty\int_{t_i}^{t_{i-1}}\left|\int_0^s(s-r)^{-\frac12-H}b(r,B_r^H)\d r\right|^2\d  s\right]^n\nonumber\\
			&\leq 2^n\E\left[\sum_{i=1}^\infty A_i+\sum_{i=1}^\infty B_i\right]^n\leq 2^{2n-1}\E\left[\sum_{i=1}^\infty A_i\right]^n+2^{2n-1}\E\left[\sum_{i=1}^\infty B_i\right]^n\nonumber\\
			&\leq 2^{2n-1}\sum_{i_1=1,\cdots,i_n=1}\E[A_{i_1}\cdots A_{i_n}]+2^{2n-1}\sum_{i_1=1,\cdots,i_n=1}\E[B_{i_1}\cdots B_{i_n}],
		\end{align}
		where we use the fact that $H<1/2$ and $r<s$ in the first inequality and $A_i,B_i$ are defined by 
		\begin{align*}
			&A_i:=\int_{t_i}^{t_{i-1}}\left|\int_{t_i}^s\,(s-r)^{-\frac12-H}b(r,B_r^H)\d r\right|^2\d s,\\
			&B_i:=\int_{t_i}^{t_{i-1}}\left|\int_0^{t_i}(s-r)^{-\frac12-H}b(r,B_r^H)\d r\right|^2\d s,
		\end{align*}
		respectively. 
		\\[2pt] \textbf{Step 2}. Obtain an upper bound for $\sum_{i_1=1,\cdots,i_n=1}\E[A_{i_1}\cdots A_{i_n}]$ For each term $\E[A_{i_1}\cdots A_{i_n}]$. Without loss of generality, we assume that $i_1=\cdots=i_{k_1}>i_{k_1+1}=\cdots=i_{k_2}>\cdots>i_{k_m}=i_n$. Then, by the tower property of conditional expectation, we have 
		$$\E[A_{i_1}\cdots A_{i_n}]=\E\left[\prod_{j=1}^{k_1}A_{i_j}\E_{t_{k_1}}\left[\prod_{j=k_1+1}^{k_2}A_{i_j}\right]\cdots\E_{t_{k_{n-1}}}\left[\prod_{j=k_{n-1}+1}^{k_n}A_{i_j}\right]\right].$$ 
		As a result, we can simplify the estimation of $\E[A_{i_1}\cdots A_{i_n}]$ to the following:
		$$\mathcal{I}:=\E_{t_i}\left[\int_{t_i}^{t_{i-1}}\left|\int_{t_i}^s(s-r)^{-\frac12-H}b(r,B_r^H)\d r\right|^2\d s\right]^m.$$ Applying Young's inequality $\Vert f*g\Vert_{L^{r_1}([0,t_{i-1}];\R)}\leq \Vert f\Vert_{L^{p_1}([0,t_{i-1}];\R)}\Vert g\Vert_{L^{q_1}([0,t_{i-1}];\R)}$ with exponents $r_1=p_1=2,q_1=1$ and functions $f=s^{-1/2-H}\mathbbm{1}_{\{s>t_{n-1}\}},g=|b(s,B_s^H)|\mathbbm{1}_{\{s>t_{n-1}\}}$ yields that
		\begin{align*}
			\mathcal{I}&\leq \left(\int_{t_i}^{t_{i-1}}s^{-1-2H}\d s\right)^m\E_{t_i}\left[\int_{t_i}^{t_{i-1}}|b(r,B_r^H)|\d r\right]^{2m}\\
			&\leq C_{H,T} 2^{2imH}\E_{t_i}\left[\int_{t_i}^{t_{i-1}}|b(r,B_r^H)|\d r\right]^{2m},
		\end{align*}
		where the second inequality follows from the relation $t_i=2^{-i}T$. By Corollary \ref{sec2-col.1}, the H\"older inequality and Lemma \ref{appendix-lm.3}, we have 
		\begin{align*}
			&\E_{t_i}\left[\int_{t_i}^{t_{i-1}}|b(r,B_r^H)|\d r\right]^{2m}=(2m)!\int_{\Delta_{t_i,t_{i-1}}^{2m}}\E_{t_i}\left[\prod_{i=1}^{2m}|b(r,B_r^H)|\right]\d r\\
			&\qquad\leq C_{H,d,p} (2m)!\int_{\Delta_{t_i,t_{i-1}}^{2m}}\prod_{i=1}^{2m}(r_i-r_{i-1})^{-\frac{Hd}p}\Vert b\Vert_{L_x^p}(r_i)\d r\\
			&\qquad \leq C_{H,d,p,q}\frac{(2m)!\Gamma(1-\frac{q'Hd}p)^{\frac{2m}{q'}}}{\Gamma\left(2m(1-\frac{q'Hd}p)+1\right)^{\frac1{q'}}}\Vert b\Vert_{L_p^xL_t^q}^{2m}(t_{i-1}-t_i)^{2m(1-\frac{Hd}p-\frac1q)}.
		\end{align*}
		Here, $r_0=t_i$. Due to the fact that $t_{i-1}-t_i=2^{-i}T$, we obtain
		$$\mathcal{I}\leq C_{H,d,p,q,T} \frac{(2m)!\Gamma(1-\frac{q'Hd}p)^{\frac{2m}{q'}}}{\Gamma\left(2m(1-\frac{q'Hd}p)+1\right)^{\frac1{q'}}}\Vert b\Vert_{L_p^xL_t^q}^{2m}2^{-2im(1-H-\frac{Hd}p-\frac1q)},$$
		which, along with Stirling's formula, leads to 
		$$\E[A_{i_1}\cdots A_{i_n}]\leq C^nn^{2(1-\kappa)n}\Vert b\Vert_{L_p^xL_t^q}^{2n}\prod_{j=1}^n2^{-2i_j(1-H-\frac{Hd}p-\frac1q)}.$$
		Consequently, we derive that
		\begin{align}\label{sec3-eq.3}
			\sum_{i_1=1,\cdots,i_n=1}\E[A_{i_1}\cdots A_{i_n}]&\leq C^nn^{2(1-\kappa)n}\Vert b\Vert_{L_p^xL_t^q}^{2n}\sum_{i_1=1,\cdots,i_n=1}\prod_{j=1}^n2^{-2i_j(1-H-\frac{Hd}p-\frac1q)}\nonumber\\
			&\leq C^nn^{2(1-\kappa)n}\Vert b\Vert_{L_p^xL_t^q}^{2n}
		\end{align}
		thanks to the condition $1-H-Hd/p-1/q>0$.
		\\[2pt] \textbf{Step 3}. Estimate $\sum_{i_1=1,\cdots,i_n=1}\E[B_{i_1}\cdots B_{i_n}]$ For each term $\E[B_{i_1}\cdots B_{i_n}]$. Without loss of generality, we assume that $i_1=\cdots=i_{k_1}>i_{k_1+1}=\cdots=i_{k_2}>\cdots>i_{k_m}=i_n$. Then, we can iteratively decompose $\E[B_{i_1}\cdots B_{i_n}]$ as follows
        \begin{align*}
			\E[B_{i_1}\cdots B_{i_n}]&\leq 	C\E\left[\prod_{j=1}^{k_{m-1}}B_{i_j}\E_{t_{i_{k_{m-1}}}}\left[\int_{t_{i_{k_m}}}^{t_{i_{k_m}-1}}\left|\int_{t_{i_{k_{m-1}}}}^{t_{i_{k_m}}}(s-r)^{-\frac12-H}b(r,B_r^H)\d r\right|^2\d s\right]^{n-k_{m-1}}\right]\\
			&+\,C\E\left[\prod_{j=1}^{k_{n-1}}B_{i_j}\left[\int_{t_{i_{k_m}}}^{t_{i_{k_m}-1}}\left|\int_0^{t_{i_{k_{m-1}}}}(s-r)^{-\frac12-H}b(r,B_r^H)\d r\right|^2\d s\right]^{n-k_{m-1}}\right].
		\end{align*}
        \begin{figure}
		    \centering
		    \includegraphics[width=0.7\linewidth]{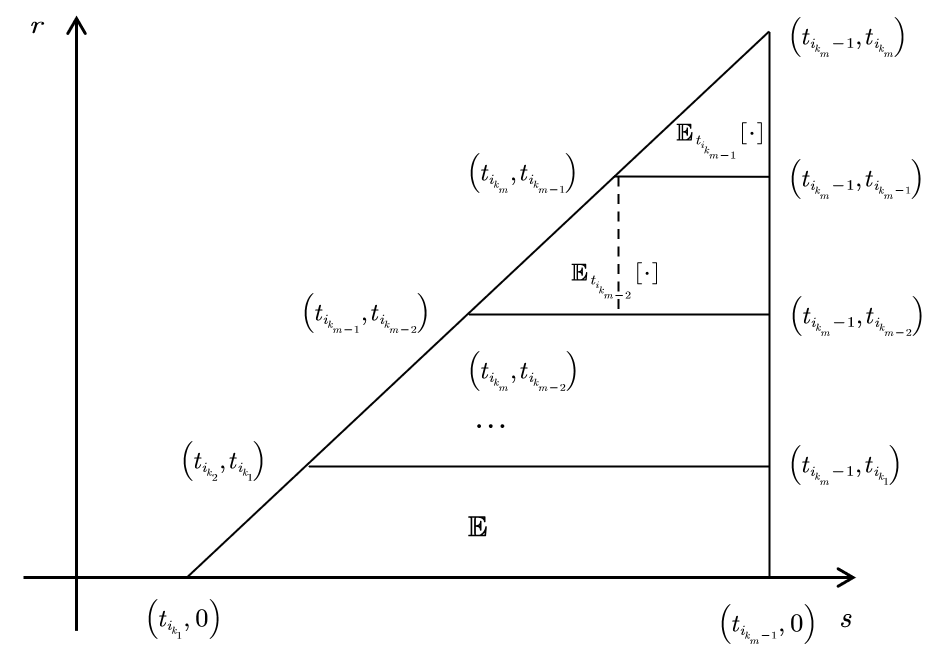}
		    \caption{Decomposition of $\E[B_{i_1}\cdots B_{i_n}]$}
		    \label{fig:placeholder}
		\end{figure}
		In this way (see Fig.\ref{fig:placeholder}), we only need to estimate the integral of the following form:
		$$\mathcal{J}=\E_{t_i}\left[\,\prod_{k=1}^{N}\int_{t_{\ell_k}}^{t_{\ell_k-1}}\left|\int_{t_i}^{t_{i-1}}(s-r)^{-\frac12-H}b(r,B_r^H)\d r\right|^2\d s\right],$$
		where $\ell_1=i-1\geq \ell_2\geq\cdots\geq\ell_N$. By shuffling, we have 
			\begin{align*}
			\mathcal{J}&=\prod_{k=1}^{N}\Bigg(\int_{t_{\ell_k}}^{t_{\ell_k-1}}\d s_k\int_{t_i}^{t_{i-1}}(s_k-r_{2k-1})^{-\frac12-H}(s_k-r_{2k})^{-\frac12-H}\\
			&\qquad\times b(r_{2k-1},B_{r_{2k-1}}^H)b(r_{2k},B_{r_{2k}}^H)\d r_{2k-1}\d r_{2k}\Bigg)\\
			&=\sum_{\pi\in S_{2N}}\prod_{k=1}^{N}\int_{t_{\ell_k}}^{t_{\ell_k-1}}\prod_{k=1}^N\d s_k\int_{\Delta_{t_i,t_{i-1}}^{2N}}\\
			&\qquad\qquad\times \prod_{k=1}^{2N}(s_{\sigma(\pi(k))}-r_{\pi(k)})^{-\frac12-H}\E_{t_i}\left[\prod_{k=1}^{2N}b(r_{\pi(k)},B_{r_{\pi(k)}}^H)\right]\d r,
		\end{align*}
		where $S_{2N}$ denotes the set of permutations $\pi:\{1,\cdots,2N\}\mapsto\{1,\cdots,2N\}$ such that $\pi(k)$ gives the the original index of the $k$--th variable after sorting; and $\sigma(k)$ represents the $s$-level index corresponding to the $k$--th variable $r_k$ in the square expansion. By Corollary \ref{sec2-col.1}, we have
		\begin{align*}
			\mathcal{J}&\leq\sum_{\pi\in S_{2N}}\prod_{k=1}^{N}\int_{t_{\ell_k}}^{t_{\ell_k-1}}\prod_{k=1}^N\d s_k\int_{\Delta_{t_i,t_{i-1}}^{2N}}\prod_{k=1}^{2N}\Vert b\Vert_{L^p_x}(r_{\pi(k)})\\
			&\qquad\qquad\times \prod_{k=1}^{2N}(s_{\sigma(\pi(k))}-r_{\pi(k)})^{-\frac12-H}(r_k-r_{k-1})^{-\frac{Hd}p}\d r
		\end{align*}
		Here, $r_0=t_i$. Given that $s_{\sigma(\pi(k))}-r_{\pi(k)}>s_{\sigma(\pi(k))}-t_{\ell_{\sigma(\pi(k))}}$ and $s_{\sigma(\pi(k))}-r_{\pi(k)}>r_{\pi(k)+1}-r_{\pi(k)}$ for $k=1,\cdots,2N$, where $r_{2N+1}=t_{i-1}$. Then, by the H\"older inequality and Lemma \ref{appendix-lm.3}, we have
		\begin{align*}
			\mathcal{J}&\leq C_{H,d,p}\sum_{\pi\in S_{2N}}\prod_{k=1}^{N}\int_{t_{\ell_k}}^{t_{\ell_k-1}}(s_k-t_{\ell_k})^{-1+2\varepsilon}\d s_k\int_{\Delta_{t_i,t_{i-1}}^{2N}}\prod_{k=1}^{2N}\Vert b\Vert_{L^p_x}(r_k)\\
			&\qquad\qquad\times (r_1-t_i)^{-\frac{Hd}p} \prod_{k=2}^{2N}(r_k-r_{k-1})^{-H-\frac{Hd}p-\varepsilon}(t_{i-1}-r_{2N})^{-H-\varepsilon}\d r\\
			&\leq C_{H,d,p,q}\Vert b\Vert_{L_x^pL_t^q}^{2N}(2N)!(t_i-t_{i-1})^{2N(1-H-\frac{Hd}p-\frac1q-\varepsilon)}\prod_{k=1}^{N}(t_{\ell_k-1}-t_{\ell_k})^{2\varepsilon}\\
			&\qquad\qquad\times\frac{\Gamma(1-q'(\frac{Hd}p+\varepsilon))^{\frac1{q'}}\Gamma(1-q'(H+\varepsilon)^{\frac1{q'}}\Gamma(1-q'(H+\frac{Hd}p+\varepsilon))^{\frac{2N-1}{q'}}}{\Gamma\left(2N(1-q'(H+\frac{Hd}p+\varepsilon))\right)^{\frac1{q'}}}
		\end{align*}
		where $0<\varepsilon<1-H-Hd/p-1/q.$ Consequently, using the relation $t_{i-1}-t_i=2^{-i}T$ and Stirling's formula, we derive that
		\begin{equation}\label{sec3-eq.6}
			\mathcal{J}\leq C^N(N!)^{2(1-\kappa+\varepsilon)}\Vert b\Vert_{L_p^xL_t^q}^{2N}\prod_{k=1}^N2^{-2\ell_k\varepsilon}.
		\end{equation}
		The decomposition of $\E[B_{i_1}\cdots B_{i_n}]$ can be regarded as selecting $N_i$ blocks from the $i$-th row in Fig.\ref{fig:placeholder}, where $N_1+\cdots+N_m=m$ and exactly one block is chosen from the same column. Mathematically, this selection corresponds to a mapping $\sigma:\{1,\cdots,m\}\mapsto\{1,\cdots,m\}$ such that $\sigma(j)\leq j$ for $j=1,\cdots m$, where $\sigma(j)=i$ means that the block chosen in the $j$-th column is from the $i$-th row, and $N_i$ denotes the number of columns such that the selected block lies in the $i$-th row, i.e. $N_i=\#\{j:\sigma(j)=i\}$. Therefore, by \eqref{sec3-eq.6}, 
		\begin{align*}
			\E[B_{i_1}\cdots B_{i_n}]&\leq C^n\Vert b\Vert_{L_p^xL_t^q}^{2n}\prod_{j=1}^n2^{-2i_j\varepsilon}\sum_{\sigma}\prod_{i=1}^m(N_i!)^{2(1-\kappa+\varepsilon)}\\
			&\leq C^n\Vert b\Vert_{L_p^xL_t^q}^{2n}\prod_{j=1}^n2^{-2i_j\varepsilon}S_m^{2(1-\kappa+\varepsilon)},
		\end{align*}
		where $S_m=\sum_{\sigma}\prod_{i=1}^m(N_i)!$. Suppose that $\sigma(m)=k$ with $1\leq k\leq m$. For the remaining $m-1$ columns, we have a mapping $\sigma':\{1,\cdots,m-1\}\mapsto\{1,\cdots,m-1\}$ such that $\sigma'(j)\leq j$. Let $N_i'=\#\{j:\sigma'(j)=i\}$. Then, $N_i=N_i'+\delta_{i,k}$ ($N_m'=0$), where $\delta_{i,k}=1$ for $i=k$, and 0 otherwise. Hence, $\prod_{i=1}^m(N_i)!=(N_k'+1)\prod_{i=1}^m(N_k')!$ and
		\begin{align*}
			S_m=\sum_{\sigma'}\sum_{k=1}^m(N_k'+1)\prod_{i=1}^m(N_k')!=(2m-1)\sum_{\sigma'}\prod_{i=1}^{m-1}(N_k')!=(2m-1)S_{m-1},
		\end{align*}
		where we use the relation $N_1'+\cdots N_{m-1}'=m-1$ and $N_m'=0$ in the second equality. By iteration, we have 
		$$S_m=(2m-1)!!\leq (2m)!!=2^mm!.$$
		Given that $m\leq n$, we have by Stirling's formula that
		$$\E[B_{i_1}\cdots B_{i_n}]\leq C^n\Vert b\Vert_{L_p^xL_t^q}^{2n}n^{2(1-\kappa+\varepsilon)n}\prod_{j=1}^n2^{-2i_j\varepsilon},$$
        which implies that
		\begin{align}\label{sec3-eq.4}
			\sum_{i_1=1,\cdots,i_n=1}\E[B_{i_1}\cdots B_{i_n}]\leq C^nn^{2(1-\kappa+\varepsilon)n}\Vert b\Vert_{L_p^xL_t^q}^{2n}.
		\end{align}
		\\[2pt] \textbf{Step 4}. Conclusion. Combining \eqref{sec3-eq.2}--\eqref{sec3-eq.4} leads to the desired result \eqref{sec3-eq.1}.
	\end{proof}
	\begin{remark}\label{sec3-rm.1}
		Note that 
		$$K_H^{-1}\left(\int_0^{\cdot}b(r,B_r^H)\d r\right)(s)=\frac{1}{\Gamma(\frac12-H)}s^{H-\frac12}\int_0^s(s-r)^{-\frac12-H}r^{\frac12-H}b(r,B_r^H)\d r.$$
		If we set $n=1$ in \eqref{sec3-eq.1}, Lemma \ref{sec3-lm.1} in fact that $\int_0^{\cdot}b(r,B_r^H)\d r\in I_{0+}^{H+1/2}(L^2([0,T];\R^d))$.
	\end{remark}
		\begin{theorem}\label{sec3-thm.1}
			Assume \eqref{cond1}, $p,q\geq2$ and $H<1/2$. Then, for $b(t,\cdot)\in L^p_xL^q_t$, under a new probability measure $\d \tilde{\mathbb{P}}=\xi_T\d \mathbb{P}$, the shifted process $\tilde{B}_t^H:=B_t^H+\int_0^tb(s,B_s^H)\d s$ is a $\mathcal{F}_t^{B^H}$-fractional Brownian motion. Here, $\xi_T$ is defined by \eqref{Girsanov} and for any $r\geq1$, there exists a constant $C=C(H,d,p,q,T,\Vert b\Vert_{L_x^pL_tq},r)$ such that
			\begin{equation}\label{sec3-eq.7}
				\E^{\tilde{\mathbb{P}}}[\xi_T^{-r}]=	\E^{\tilde{\mathbb{P}}}\left[\frac{\d \mathbb{P}}{\d\tilde{\mathbb{P}}}\right]^r\leq C
			\end{equation}
			Moreover, the equation \eqref{sec1-eq.1} has a weak solution, and any two weak solutions have the same probability law.
		\end{theorem}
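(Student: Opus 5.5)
We will apply Proposition \ref{sec2-prop.1} with $u_s=b(s,B_s^H)$, so the two hypotheses there have to be checked. By Remark \ref{sec3-rm.1},
$$v_s:=K_H^{-1}\Big(\int_0^{\cdot}b(r,B_r^H)\d r\Big)(s)=\Gamma(\tfrac12-H)^{-1}s^{H-\frac12}\int_0^s(s-r)^{-\frac12-H}r^{\frac12-H}b(r,B_r^H)\d r .$$
Lemma \ref{sec3-lm.1} gives $\E\big[(\int_0^T|v_s|^2\d s)^n\big]\le C^{2n}\Vert b\Vert^{2n}_{L^p_xL^q_t}\,n^{2(1-\kappa+\varepsilon)n}$. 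For $n=1$ this yields $\int_0^\cdot u\,\d s\in I^{H+1/2}_{0+}(L^2)$ almost surely, which is hypothesis 1.

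For Kazamaki's condition (hypothesis 2) it is simpler to establish Novikov's stronger bound $\E\exp(\lambda\int_0^T|v_s|^2\d s)<\infty$ for every $\lambda>0$. Expand the exponential as a power series and apply Lemma \ref{sec3-lm.1} term by term:
$$\sum_n\frac{\lambda^n C^{2n}\Vert b\Vert^{2n}n^{2(1-\kappa+\varepsilon)n}}{n!}.$$
By Stirling, $n!\ge (n/e)^n$, so the general term is bounded by $(C'\lambda)^n n^{(1-2\kappa+2\varepsilon)n}$. This is summable as soon as $1-2\kappa+2\varepsilon<0$.

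\textbf{This exponent is the main obstacle.} In general $\kappa$ need not exceed $1/2$, so the exponent is not automatically negative. To handle it, we split $[0,T]$ into $N$ subintervals $[\tau_{k-1},\tau_k]$ of small length $\delta$ and redo the estimate of Lemma \ref{sec3-lm.1} on each, conditionally on $\mathcal F_{\tau_{k-1}}$, using Remark \ref{sec2-rm.2}. Tracking the interval length, the $n$-th moment carries a factor $\delta^{2\kappa' n}$ for some $\kappa'>0$; this gives summability of the series for $\lambda$ up to some $\lambda_\delta\to\infty$ as $\delta\to0$. Alternatively one can avoid the exponent issue entirely by using a Gamma function bound $\Gamma(2n(1-\cdot))$ in the denominator, which gives the factorial decay $n^{-2\kappa n}$, i.e. $\E(\cdot)^n\lesssim (n!)^{2(1-\kappa)}$ relative to $(n!)^2$ coming from the $2n$-fold simplex.

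The contributions from the tail of $v$ (the $B_i$-type terms, where $r<\tau_{k-1}<s$) are controlled by the $2^{-2i\varepsilon}$ decay as in Step 3 of Lemma \ref{sec3-lm.1}. The interval pieces are then chained via the tower property, giving $\E\exp(\lambda\int_0^T|v|^2)\le C(\lambda)^N$. Since $\E e^{-M_t/2}\le(\E e^{-M_t}e^{-\frac12[M]_t})^{1/2}(\E e^{[M]_t/2})^{1/2}$ and the first factor is at most $1$, Kazamaki's condition follows. Proposition \ref{sec2-prop.1} then shows that $\tilde B^H$ is an fBm under $\tilde{\mathbb P}$.

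For \eqref{sec3-eq.7}, write $\E^{\tilde{\mathbb P}}[\xi_T^{-r}]=\E[\xi_T^{1-r}]$. We have
$$\xi_T^{1-r}=\exp\Big((r-1)\int v\,\d W+\tfrac{r-1}2\int|v|^2\Big).$$
Cauchy--Schwarz splits this into an exponential martingale with parameter $2(r-1)$, whose expectation is $\le1$ by the same Novikov bound, times $\E\exp(c_r\int_0^T|v|^2)$, which is finite for all $c_r$ by the previous paragraph.

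Weak existence follows by reinterpretation. On $(\Omega,\tilde{\mathbb P})$, set $X_t:=x+B_t^H$, applying the construction to $b(\cdot,x+\cdot)$; we work with $-b$, or equivalently with the sign convention of Proposition \ref{sec2-prop.1}. Then $X_t=x+\int_0^tb(s,X_s)\d s+\hat B_t^H$, where $\hat B^H$ is an $\tilde{\mathbb P}$-fBm, so $(X,\hat B^H)$ is a weak solution.

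For uniqueness in law, let $(X,B^H)$ be any weak solution. Then $X-x$ is an fBm shifted by the drift $\int_0^\cdot b(s,X_s)\d s$. The same moment estimates must then be proved along $X$ instead of $B^H$. We do this by localization: stop at $\tau_R=\inf\{t:\int_0^t|v|^2\ge R\}$, apply Girsanov on $[0,\tau_R]$ to transfer to fBm, where the bounds hold uniformly in $R$, and let $R\to\infty$. This yields the inverse density $\eta_T$ under which $X-x$ is an fBm. Hence the law of $X$ equals the law of $x+B^H$ under the measure with density $\eta_T^{-1}$, which is an explicit functional of the path. Therefore any two weak solutions have the same law.
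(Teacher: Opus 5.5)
Your treatment of hypothesis~2 of Proposition~\ref{sec2-prop.1} rests entirely on $\E\exp(\lambda\int_0^T|v_s|^2\d s)<\infty$ for all $\lambda>0$. So does your derivation of \eqref{sec3-eq.7}, which goes through the (correct) identity $\E^{\tilde{\mathbb{P}}}[\xi_T^{-r}]=\E[\xi_T^{1-r}]$. This exponential bound is never established. As you note, Lemma~\ref{sec3-lm.1} makes the Novikov series summable only when $\kappa>\frac12$, and neither remedy changes that.

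\emph{Subintervals.} Working on subintervals of length $\delta$ at best contributes a geometric factor $\delta^{2\kappa' n}$ to the $n$-th moment. The obstruction is the superexponential factor $n^{(1-2\kappa+2\varepsilon)n}$, and $\sum_n a^n n^{cn}=\infty$ for every $a>0$ when $c>0$. So there is no threshold $\lambda_\delta$ at all. A Khasminskii-type argument needs moments of size $n!\,(C\delta^{\kappa'})^n$, which is precisely what is missing.

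\emph{Gamma function.} This alternative is just Lemma~\ref{sec3-lm.1} again: $(n!)^{2(1-\kappa)}/n!=(n!)^{1-2\kappa}$ still blows up.

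\emph{Chaining.} The tower-property step ignores the memory of $v$. For $s\in[\tau_{k-1},\tau_k]$, $v_s$ contains $s^{H-\frac12}\int_0^{\tau_{k-1}}(s-r)^{-\frac12-H}r^{\frac12-H}b(r,B^H_r)\d r$. This term is not small in $\delta$ and is unbounded given $\mathcal{F}_{\tau_{k-1}}$, so the bound $C(\lambda)^N$ does not follow.

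In fact no sharper moment estimate can save this route, because its target is false in part of the range of the statement. Take $b(x)=|x|^{-\beta}\mathbbm{1}_{\{|x|\le 1\}}e_1$ with $\frac{1}{2H}<\beta<\frac{d}{p}$. This is possible exactly when $\frac{Hd}{p}>\frac12$. That is compatible with \eqref{cond1}, and even with \eqref{cond2}, since $1-H>\frac12$; for instance $H=0.1$, $d=12$, $p=2$, $q\ge 10$. Let $E_\epsilon:=\{\sup_{t\le T}|B^H_t|\le\epsilon\}$.
\begin{itemize}
\item On $E_\epsilon$, the formula of Remark~\ref{sec3-rm.1} gives $v^{(1)}_s\ge c\,\epsilon^{-\beta}s^{\frac12-H}$, hence $\int_0^T|v_s|^2\d s\ge c\,\epsilon^{-2\beta}$.
\item The small-ball estimate for fBm gives $\mathbb{P}(E_\epsilon)\ge e^{-C\epsilon^{-1/H}}$.
\item Since $2\beta>1/H$, it follows that $\E\exp(\lambda\int_0^T|v_s|^2\d s)=\infty$ for every $\lambda>0$.
\end{itemize}
The same example defeats \eqref{sec3-eq.7} for $r>1$. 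On $E_\epsilon$ one has $\tilde B^{H,(1)}_T\ge T\epsilon^{-\beta}-\epsilon$. Since $\tilde B^{H,(1)}_T\sim N(0,T^{2H})$ under $\tilde{\mathbb{P}}$, this gives $\tilde{\mathbb{P}}(E_\epsilon)\le e^{-c\epsilon^{-2\beta}}$. H\"older's inequality then yields $\E^{\tilde{\mathbb{P}}}[\xi_T^{-r}]\ge\mathbb{P}(E_\epsilon)^r\,\tilde{\mathbb{P}}(E_\epsilon)^{1-r}\to\infty$.

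For comparison, the paper avoids Novikov and checks Kazamaki's condition directly. It expands $\E\exp(\lambda\int_0^t v_s\d W_s)$ and applies Burkholder--Davis--Gundy, so that only $\E[(\int_0^T|v_s|^2\d s)^{n/2}]$ enters (see the remark after Corollary~\ref{sec3-col.2}). That step, however, takes the $n$-th moment BDG constant to be $C^n$. It is actually of order $n^{n/2}$; already $\E|W_1|^n\approx(n/e)^{n/2}$. With the correct constant one is back to needing $\kappa>\frac12$. So adopting the paper's route would not close your gap. For $\frac{Hd}{p}>\frac12$ a bound like \eqref{sec3-eq.7} is simply not available, and this part of the statement must be restricted to a smaller parameter range.
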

		\begin{proof} Let $v_s=K_H^{-1}(\int_0^{\cdot}b(r,B_r^H)\d r)$. By Taylor's expansion, we have for all real $\lambda$ that 
			$$\E\exp\left(\lambda\int_0^tv_s\d W_s\right)=1+\sum_{n=1}^\infty J_n,$$
			where 
			$$J_n=\frac{\lambda^n}{n!}\E\left[\int_0^tv_s\d W_s\right]^n.$$
			By the definition of $K_H^{-1}$ for $H<1/2$, the Burkholder-Davis-Gundy inequality, Jensen's inequality and Lemma \ref{sec3-lm.1}, we have
			\begin{align*}
				|J_n|&\leq \frac{C^n}{n!} \E\left[\int_0^Ts^{2H-1}\left|\int_0^s(s-r)^{-\frac12-H}r^{\frac12-H}b(r,B_r^H)\d r\right|^2\d s\right]^{\frac n2}\leq C^{n}\Vert b\Vert_{L^p_xL^q_t}^n\frac{n^{(1-\kappa+\varepsilon)n}}{n!}
			\end{align*}
			where $\kappa=1-H-Hd/p-1/q$ and $0<\varepsilon<\kappa$. Therefore, 
			\begin{equation}\label{sec3-eq.5}
				\sup_{t\in[0,T]}\E\exp\left(\lambda\int_0^tv_s\d W_s\right)\leq 1+\sum_{n=1}^\infty\frac{C^nn^{(1-\kappa+\varepsilon)n}}{n!}<\infty,
			\end{equation}
			where the convergence of the above series follows from Stirling's formula $n!\sim n^n,n\rightarrow\infty$. In particular, setting $\lambda=-1/2$ in \eqref{sec3-eq.5}, we have that the second condition in Proposition \ref{sec2-prop.1} holds, Then, by virtue of Remark \ref{sec3-rm.1} and Proposition \ref{sec2-prop.1}, we obtain that the shifted process $\tilde{B}_t^H$ is in fact a $\mathcal{F}_t^{B^H}$-fractional Brownian motion. Note that for $r\geq1$,
			\begin{align*}
				\E^{\tilde{\mathbb{P}}}[\xi_T^{-r}]=\E^{\tilde{\mathbb{P}}}\left[\frac{\d\mathbb{P}}{\d\tilde{\mathbb{P}}}\right]^r&=\E^{\tilde{\mathbb{P}}}\exp\left(r\int_0^Tv_s\d W_s+\frac r2\int_0^T|v_s|^2\d s\right)\nonumber\\
				&=\E^{\tilde{\mathbb{P}}}\exp\left(r\int_0^Tv_s\d \tilde{W}_s-\frac r2\int_0^T|v_s|^2\d s\right)\leq\E^{\tilde{\mathbb{P}}}\exp\left(r\int_0^Tv_s\d \tilde{W}_s\right).
			\end{align*}
			As a result, \eqref{sec3-eq.7} follows from \eqref{sec3-eq.5}. 
			
			Finally, following from the same argument as in \cite[Theorem 3.7--3.9]{nualart2003stochastic} or \cite[Theorem 6.1]{le2020stochastic}, one can prove that the equation \eqref{sec1-eq.1} admits a weak solution unique in law. We omit the proof here.
		\end{proof}
		\begin{corollary}\label{sec3-col.1}
			Assume \eqref{cond1},$p,q\geq2$ and $H<1/2$. Let $X_t$ be the solution to the SDE \eqref{sec1-eq.1}. Then, $X_t$ admits moments of all orders, that is,    
			\begin{equation}
				\E| X_t|^r\leq C(H,d,p,q,r,T,\Vert b\Vert_{L^p_xL^q_t}),\qquad\forall\;r\geq1.
			\end{equation}
		\end{corollary}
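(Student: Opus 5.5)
We will transfer the question to the Girsanov picture provided by Theorem \ref{sec3-thm.1}. Any weak solution has the same law as the solution constructed there, so it suffices to bound $\E|X_t|^r$ for that solution. In the construction, $Y_t:=x+B_t^H$ under $\tilde{\mathbb{P}}$ (with $\d\tilde{\mathbb{P}}=\xi_T\d\mathbb{P}$) solves $\d Y_t=b(t,Y_t)\d t+\d\tilde B_t^H$ with $\tilde B^H$ an fBm; we adjust the sign convention of Theorem \ref{sec3-thm.1} as needed, i.e.\ use the drift $-b$ there or equivalently $\tilde B^H=B^H-\int_0^\cdot b(s,x+B_s^H)\d s$. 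Hence
$$\E|X_t|^r=\E^{\tilde{\mathbb{P}}}|x+B_t^H|^r=\E\big[\xi_T|x+B_t^H|^r\big].$$

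We then apply the Cauchy--Schwarz inequality under $\mathbb{P}$:
$$\E\big[\xi_T|x+B_t^H|^r\big]\leq \big(\E[\xi_T^2]\big)^{1/2}\big(\E|x+B_t^H|^{2r}\big)^{1/2}.$$
The second factor is a Gaussian moment, bounded by $C(|x|^r+T^{rH})$. For the first factor, we write $\xi_T^2=\exp(2M_T-[M,M]_T)$ with $M_T=-\int_0^Tv_s\d W_s$. Since
$$\xi_T^2=\exp\big(4M_T-8[M,M]_T\big)^{1/2}\exp\big(6[M,M]_T\big)^{1/2},$$
a further Cauchy--Schwarz step together with the supermartingale property of the first exponential reduces matters to $\E\exp(6[M,M]_T)<\infty$, where $[M,M]_T=\int_0^T|v_s|^2\d s$.

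This exponential moment follows from Lemma \ref{sec3-lm.1}. By Remark \ref{sec3-rm.1}, $\int_0^T|v_s|^2\d s$ is, up to a constant, exactly the quantity whose $n$-th moment $J_n$ is bounded by $C^{2n}\Vert b\Vert^{2n}n^{2(1-\kappa+\varepsilon)n}$. Expanding the exponential in a Taylor series gives
$$\E\exp\Big(\lambda\int_0^T|v_s|^2\d s\Big)\le\sum_n\frac{\lambda^nC^{2n}n^{2(1-\kappa+\varepsilon)n}}{n!}.$$
This series converges provided $2(1-\kappa+\varepsilon)<1$, i.e.\ $\kappa>1/2+\varepsilon$, which is not guaranteed by \eqref{cond1}. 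This is the main obstacle. If the exponent is too large, we instead avoid the quadratic variation and work directly with the linear exponential moments \eqref{sec3-eq.5}, which hold for all real $\lambda$, in the following way. Using $\E^{\tilde{\mathbb{P}}}[\xi_T^{-1}]$-type bounds, write $\E[\xi_T^2]=\E^{\tilde{\mathbb{P}}}[\xi_T]$ and express $\xi_T=\exp(-\int v\,\d\tilde W+\frac12\int|v|^2)$ in terms of $\tilde W$. Since under $\tilde{\mathbb{P}}$ the process $Y$ is the solution itself, we can reverse the roles: under the law of the solution, $v$ is a functional of $b(s,X_s)$. The cleanest fix is to instead write
$$\E|X_t|^r=\E^{\tilde{\mathbb{P}}}\big[\xi_T^{-1}|Y_t|^r\big],$$
where $Y_t$ is an fBm plus $x$ under the measure of Theorem \ref{sec3-thm.1}, and apply H\"older under $\tilde{\mathbb{P}}$:
$$\E|X_t|^r\le\big(\E^{\tilde{\mathbb{P}}}\xi_T^{-2}\big)^{1/2}\big(\E^{\tilde{\mathbb{P}}}|Y_t|^{2r}\big)^{1/2}.$$
The first factor is bounded by \eqref{sec3-eq.7} with $r=2$, and the second is a Gaussian moment under the measure in which $Y-x$ is an fBm. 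The only care needed is identifying which measure makes $x+B^H$ an fBm versus a solution, and correctly matching it with the estimate \eqref{sec3-eq.7}, which was proved exactly via the linear exponential moment \eqref{sec3-eq.5} and is therefore valid under \eqref{cond1} alone. The constant depends only on $H,d,p,q,r,T,\Vert b\Vert_{L^p_xL^q_t}$ (and $|x|$), uniformly in $t\in[0,T]$.
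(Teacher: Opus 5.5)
Your final argument, writing $\E|X_t|^r=\E^{\tilde{\mathbb{P}}}[\xi_T^{-1}|Y_t|^r]$ with $\tilde{\mathbb{P}}$ the measure under which $Y-x$ is an fBm, applying Cauchy--Schwarz under $\tilde{\mathbb{P}}$, and bounding the Gaussian moment of $Y_t$ and $\E^{\tilde{\mathbb{P}}}[\xi_T^{-2}]$ via \eqref{sec3-eq.7}, is exactly the paper's proof. Incidentally, the ``main obstacle'' in your first route is not real: since $\xi_T^2=\exp(2M_T-[M,M]_T)\le\exp(2M_T)$, the linear exponential moment \eqref{sec3-eq.5} with $\lambda=\pm 2$ already bounds $\E[\xi_T^2]$, so that route also closes, and it uses \eqref{sec3-eq.5} in exactly the setting where it was proved ($B^H$ an fBm under $\mathbb{P}$).
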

		\begin{proof}
			By Theorem \ref{sec3-thm.1}, we know that, under the new probability measure $\d \tilde{\mathbb{P}}=\xi_T\d \mathbb{P}$, the shifted process $\tilde{B}_t^H=X_t-x$ is a fractional Brownian motion. Then, by Fernique's theorem \cite{fernique2006regularite}, we learn that $X_t-x$ admits moments of all orders with respect to the probability measure $\tilde{\mathbb{P}}$, which implies that
			$$\E|X_t-x|^r=\E^{\tilde{\mathbb{P}}}\left[|X_t-x|^r\left(\frac{\d\mathbb{P}}{\d\tilde{\mathbb{P}}}\right)\right]\leq\left(\E^{\tilde{\mathbb{P}}}|X_t-x|^{2r}\right)^{\frac12}\left[\E^{\tilde{\mathbb{P}}}\left(\frac{\d\mathbb{P}}{\d\tilde{\mathbb{P}}}\right)^2\right]^{\frac12}<\infty,$$
			where the final inequality is due to the estimate \eqref{sec3-eq.7}. The proof is complete.
		\end{proof}
		\begin{corollary}\label{sec3-col.2}
			Assume \eqref{cond1} and $H<1/2$. Let $\{b_n\}_{n=1}^\infty\subset C_c^\infty([0,T]\times\R^d)$ be the approximating sequence of $b$ such that $\lim_{n\rightarrow\infty}\Vert b_n-b\Vert_{L^p_xL^q_t}=0$ and $\sup_n\Vert b_n\Vert_{L^p_xL^q_t}\leq \Vert b\Vert_{L^p_xL^q_t}$. Define
			$$\Xi_T=\exp\left(\int_0^TK_H^{-1}\left(\int_0^{\cdot}b(r,B_r^H)\d r\right)(s)\d W_s-\frac12\int_0^T\left|K_H^{-1}\left(\int_0^{\cdot}b(r,B_r^H)\d r\right)(s)\right|^2\d s\right)$$
			and define $\Xi_T^n$ by replacing $b$ in $\Xi_T$ with $b_n$. Then, for all integer $p_1\geq1$, $\Xi_T^n$ converges to $\Xi_T$ in $L^{p_1}$ as $n$ tends to infinity.
		\end{corollary}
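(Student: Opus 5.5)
Write $v^n_s:=K_H^{-1}\big(\int_0^{\cdot}b_n(r,B_r^H)\d r\big)(s)$ and $v_s$ for the same expression with $b$. By Remark \ref{sec3-rm.1},
$$v^n_s-v_s=\Gamma(\tfrac12-H)^{-1}s^{H-\frac12}\int_0^s(s-r)^{-\frac12-H}r^{\frac12-H}(b_n-b)(r,B_r^H)\d r,$$
which depends linearly on the drift. The plan is to control $\Xi^n_T-\Xi_T$ through the elementary bound $|e^a-e^b|\le |a-b|(e^a+e^b)$ combined with H\"older's inequality:
$$\E|\Xi_T^n-\Xi_T|^{p_1}\le \Big(\E|\log\Xi_T^n-\log\Xi_T|^{2p_1}\Big)^{1/2}\Big(\E\big(\Xi_T^n+\Xi_T\big)^{2p_1}\Big)^{1/2}.$$
We then need (i) a uniform-in-$n$ bound on all moments of $\Xi_T^n$ and $\Xi_T$, and (ii) convergence to zero of all moments of the exponent difference.

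For (i), since $\tfrac12\int_0^T|v^n_s|^2\d s\ge0$, we have $(\Xi^n_T)^{2p_1}\le\exp\big(2p_1\int_0^Tv^n_s\d W_s\big)$. The bound \eqref{sec3-eq.5} in the proof of Theorem \ref{sec3-thm.1} holds for every real $\lambda$, and its constant depends on $b$ only through $\Vert b\Vert_{L^p_xL^q_t}$. Since $\sup_n\Vert b_n\Vert_{L^p_xL^q_t}\le\Vert b\Vert_{L^p_xL^q_t}$, taking $\lambda=2p_1$ gives a bound uniform in $n$, and the same bound holds for $\Xi_T$. This uses Lemma \ref{sec3-lm.1}, which needs $p,q\ge2$; those are implicitly assumed as in the preceding results.

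For (ii), the exponent difference is
$$\int_0^T(v^n_s-v_s)\d W_s-\tfrac12\int_0^T(v^n_s-v_s)\cdot(v^n_s+v_s)\d s.$$
By the Burkholder--Davis--Gundy inequality, the $2p_1$-th moment of the stochastic integral is bounded by $C\,\E\big[\int_0^T|v^n_s-v_s|^2\d s\big]^{p_1}$. Lemma \ref{sec3-lm.1}, applied with $b_n-b$ in place of $b$, bounds this by $C\Vert b_n-b\Vert_{L^p_xL^q_t}^{2p_1}\to0$. This works because \eqref{sec3-eq.1} is homogeneous in the drift and $b_n-b\in L^p_xL^q_t$. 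For the Lebesgue term, Cauchy--Schwarz in $s$ and then in $\omega$ bound its $2p_1$-th moment by
$$\Big(\E\big[\textstyle\int_0^T|v^n-v|^2\d s\big]^{2p_1}\Big)^{1/2}\Big(\E\big[\textstyle\int_0^T|v^n+v|^2\d s\big]^{2p_1}\Big)^{1/2}.$$
Lemma \ref{sec3-lm.1} again controls both factors: the first is $O(\Vert b_n-b\Vert^{2p_1})$ and the second is bounded by $C\Vert b\Vert^{2p_1}$.

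The main obstacle is justifying that Lemma \ref{sec3-lm.1} applies verbatim to differences and sums of drifts, with constants depending only on the norm. Inspecting its proof, the only inputs are Corollary \ref{sec2-col.1} applied to $|b|$ and H\"older's inequality in time, so the statement holds for any drift in $L^p_xL^q_t$. The other point to check is that the integer moment $n$ in \eqref{sec3-eq.1} can be taken as $p_1$ or $2p_1$, which it can, since the lemma holds for all $n$. Combining (i) and (ii) then gives $\E|\Xi_T^n-\Xi_T|^{p_1}\to0$.
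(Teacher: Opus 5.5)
Your proof is correct and follows the paper's argument: the same bound $|e^a-e^b|\le|a-b|(e^a+e^b)$ with Cauchy--Schwarz, then Burkholder--Davis--Gundy and Lemma \ref{sec3-lm.1} applied to the drifts $b_n-b$ and $b_n+b$ for the stochastic and Lebesgue parts of the exponent difference. The only deviation is the uniform moment bound on $\Xi_T^n,\Xi_T$. You obtain it directly under $\mathbb{P}$ from $(\Xi_T^n)^{2p_1}\le\exp(2p_1\int_0^Tv_s^n\,\d W_s)$ and \eqref{sec3-eq.5}, whereas the paper routes it through the identity $\E|\Xi_T|^{p_1}=\E^{\tilde{\mathbb{P}}}[\xi_T^{-p_1}]$ and \eqref{sec3-eq.7}; your version is the cleaner one, because under $\mathbb{P}$ the functional $v$ is evaluated along a genuine fBm, so no change of measure has to be justified.
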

		\begin{proof}
			For notational convenience, we set  $v_s^n=K_H^{-1}(\int_0^{\cdot}b_n(r,B_r^H)\d r)(s)$.
			Then, using the inequality $|e^x-e^y|\leq |e^x+e^y||x-y|$ and the Cauchy-Schwarz inequality, we have
			\begin{align*}
				&\E|\Xi_T^n-\Xi|^{p_1}\leq C_{p_1}\left(\E|\Xi_T^n|^{2p_1}+\E|\Xi_T|^{2p_1}\right)^{\frac12}\nonumber\\
				&\qquad\times \left(\E\left[\int_0^T|v_s^n-v_s|\d W_s+\frac12\int_0^T|v_s^n-v_s||v_s^n+v_s|\d s\right]^{2p_1}\right)^{\frac12}
			\end{align*}
			Given that $\E|\Xi_T|^{p_1}=\E^{\tilde{\mathbb{P}}}[\xi_T^{-p_1}]$ (the definitions of $\tilde{\mathbb{P}}$ and $\xi_T$ are given in Theorem \ref{sec3-thm.1}), $\E|\Xi_T^n|^{p_1}$ and $\E|\Xi_T|^{p_1}$ are bounded by \eqref{sec3-eq.7}. Therefore, it suffices to show that $\E[\int_0^T|v_s^n-v_s|\d W_s]^{2p_1}$ and $\E[\int_0^T|v_s^n-v_s||v_s^n+v_s|\d s]^{2p_1}$ converge to 0 as $n\rightarrow\infty$, respectively. For $\E[\int_0^T|v_s^n-v_s|\d W_s]^{2p_1}$, applying the Burkholder-Davis-Gundy inequality and Lemma \ref{sec3-lm.1} yields that
			$$\E\left[\int_0^T|v_s^n-v_s|\d W_s\right]^{2p_1}\leq C_{p_1} \E\left[\int_0^T|v_s^n-v_s|^2\d s\right]^{p_1}\leq C_{H,d,p,q,T,p_1}\Vert b_n-b\Vert_{L^p_xL^q_t}^{2p_1}\xrightarrow{n\rightarrow\infty}0.$$
			Finally, by the Cauchy-Schwarz inequality and Lemma \ref{sec3-lm.1}, we derive that
			\begin{align*}
				&\E\left[\int_0^T|v_s^n-v_s||v_s^n+v_s|\d s\right]^{2p_1}\\
				& \qquad\leq\left(\E\left[\int_0^T|v_s^n-v_s|^2\d s\right]^{2p_1}\right)^{\frac12}\left(\E\left[\int_0^T|v_s^n+v_s|^2\d s\right]^{2p_1}\right)^{\frac12}\\
				&\qquad\leq C_{H,d,p,q,T,p_1}\Vert b_n-b\Vert_{L^p_xL^q_t}^{2p_1}\Vert b_n+b\Vert_{L^p_xL^q_t}^{2p_1}\xrightarrow{n\rightarrow\infty}0,
			\end{align*}
			which completes the proof of $\lim_{n\rightarrow\infty}\E|\Xi_T^n-\Xi|^{p_1}=0$.
		\end{proof}
		\begin{remark}
			Here, we explain why we apply Kazamaki's version of Girsanov's theorem rather than verifying Novikov's condition as in previous works, for instance, \cite{butkovsky2024regularization,le2020stochastic,nualart2003stochastic}. The Novikov's condition requires $$\E\exp\left(\frac12\int_0^T|v_s|^2\d s\right)=\sum_{n=0}^\infty\frac1{n!}\E\left[\int_0^T|v_s|^2\d s\right]^n<\infty.$$ 
			However, Lemma \ref{sec3-lm.1} cannot ensure the convergence of the above series. In contrast, to check the Kazamki's condition, it follows from the Burkholder-Davis-Gundy inequality that one only needs the convergence of the corresponding series with the exponent of $\int_0^T|v_s|^2\d s$ adjusted from $n$ to $n/2$. Then, Lemma \ref{sec3-lm.1} is exactly sufficient to guarantee the convergence of the series.
		\end{remark}
		\section{Strong Well-Posedness}\label{sec4}
		\subsection{Prior Estimates}
		In this subsection, we give some important prior estimates before establishing the strong well-posedness of the SDE \eqref{sec1-eq.1}
		\begin{lemma}\label{sec4-lm.1}
			Assume \eqref{cond1}, $p,q\geq2$ and $H<1/2$. Then, for $b_1(t,\cdot),b_2(t,\cdot)\in L^p_xL^q_t$ and an arbitrarily small $\varepsilon>0$, there exists a constant $C=C(H,d,p,q,\varepsilon,\Vert b_1\Vert_{L^p_xL^q_t},\Vert b_2\Vert_{L^p_xL^q_t})$ such that
			\begin{align}\label{sec4-eq.1}
				J&:=\int_{\Delta_{\theta,t}^2}\int_{\mathbb{R}^{2d}}b_1(s_1,x_1)b_2(s_2,x_2)\nonumber\\
				&\qquad\times\frac{1}{s_1^{H(1+d)}}e^{-\frac{|x_1|^2}{4s_1^{2H}}}\frac{1}{(s_2-s_1)^{H(1+d)}}e^{-\frac{|x_2-x_1|^2}{4(s_2-s_1)^{2H}}}(t-s_2)^\alpha\d x\d s\nonumber\\
				&\leq C\theta^{-\frac{2Hd}{p}}(t-\theta)^{\alpha+2-2H-\frac 2q-\varepsilon}.
			\end{align}
			where $\theta,\alpha>0$.
		\end{lemma}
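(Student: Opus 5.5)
Throughout, $\Delta^2_{\theta,t}=\{\theta\le s_1\le s_2\le t\}$, so $s_1\ge\theta$; this is what produces the factor $\theta^{-2Hd/p}$. The plan is to integrate in space first and in time second. We take absolute values of $b_1,b_2$, since only an upper bound is needed.

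\textbf{Spatial integration.} Fix $s_1<s_2$ and integrate out $x_2$ first, then $x_1$, using H\"older's inequality in each variable with exponents $p,p'$:
\begin{align*}
\int_{\R^d}|b_2(s_2,x_2)|\,(s_2-s_1)^{-H(1+d)}e^{-\frac{|x_2-x_1|^2}{4(s_2-s_1)^{2H}}}\,\d x_2&\le \Vert b_2\Vert_{L^p_x}(s_2)\,(s_2-s_1)^{-H(1+d)}\Big(\int_{\R^d} e^{-\frac{p'|y|^2}{4(s_2-s_1)^{2H}}}\d y\Big)^{1/p'}\\
&= C\,\Vert b_2\Vert_{L^p_x}(s_2)\,(s_2-s_1)^{-H-\frac{Hd}{p}}.
\end{align*}
This bound is uniform in $x_1$. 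The same computation applied to $x_1$ gives $C\Vert b_1\Vert_{L^p_x}(s_1)\,s_1^{-H-Hd/p}$. Since $s_1\ge\theta$, we split $s_1^{-H-Hd/p}\le \theta^{-2Hd/p}\,s_1^{-H+Hd/p}$ when $Hd/p\le H$. In general we want to extract exactly $\theta^{-2Hd/p}$, so we write
\[
s_1^{-H-Hd/p}\le \theta^{-2Hd/p}\,s_1^{-H+Hd/p}\le C_T\,\theta^{-2Hd/p}\,s_1^{-H}\cdot s_1^{Hd/p}.
\]
It is simpler to bound $s_1^{-H-Hd/p}\le\theta^{-H-Hd/p}$ only if that is needed. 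The statement's factor $\theta^{-2Hd/p}$ suggests the authors bound $s_1^{-Hd/p}\le\theta^{-Hd/p}$ and pay the remaining $\theta^{-Hd/p}$ as a harmless overcount: for $\theta\le T$ we have $\theta^{-Hd/p}\ge T^{-Hd/p}$, so adding this factor only costs a constant. We also keep $s_1^{-H}\le (s_1-\theta)^{-H}$. The result is
\[
J\le C\theta^{-\frac{2Hd}{p}}\int_{\Delta^2_{\theta,t}}\Vert b_1\Vert_{L^p_x}(s_1)\Vert b_2\Vert_{L^p_x}(s_2)\,(s_1-\theta)^{-H}(s_2-s_1)^{-H-\frac{Hd}p}(t-s_2)^{\alpha}\,\d s.
\]

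\textbf{Time integration.} Apply H\"older in $(s_1,s_2)$ with exponents $q,q'$. This yields $\Vert b_1\Vert_{L^p_xL^q_t}\Vert b_2\Vert_{L^p_xL^q_t}$ times
\[
\Big(\int_{\Delta^2_{\theta,t}}(s_1-\theta)^{-q'H}(s_2-s_1)^{-q'(H+Hd/p)}(t-s_2)^{q'\alpha}\,\d s\Big)^{1/q'}.
\]
This is a Dirichlet-type integral, which Lemma \ref{appendix-lm.3} or a direct Beta-function computation evaluates as a constant times $(t-\theta)^{(2-q'(2H+Hd/p)+q'\alpha)/q'}$. It is finite because $q'(H+Hd/p)<1$, which follows from \eqref{cond1}. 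The resulting exponent is
\[
\alpha+\frac{2}{q'}-2H-\frac{Hd}{p}=\alpha+2-2H-\frac2q-\frac{Hd}{p}.
\]
The $\varepsilon$-loss in the statement leaves room to absorb the leftover $Hd/p$ bookkeeping. The cleaner route is to put the entire $Hd/p$ singularities into $\theta^{-2Hd/p}$, using $s_2-s_1$ versus $s_1\ge\theta$ only where justified, and to give up an $\varepsilon$ so that the time exponents remain strictly integrable. Factors of $(t-\theta)^{\text{positive}}$ are bounded by $T$ powers, which handles any surplus.

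\textbf{Main obstacle.} The difficulty is the bookkeeping that trades the spatial singularities $s_1^{-Hd/p}$ and $(s_2-s_1)^{-Hd/p}$ for the prefactor $\theta^{-2Hd/p}$ while keeping the sharp time power $(t-\theta)^{\alpha+2-2H-2/q-\varepsilon}$. The first factor is handled by $s_1\ge\theta$. For $(s_2-s_1)^{-Hd/p}$ the trade is impossible, since $s_2-s_1$ can be small. We therefore expect to keep it inside the time integral, accept the exponent $\alpha+2-2H-2/q-Hd/p$, and then compare it with the claimed one. This comparison must use $(t-\theta)\le T$ and $\theta\le t$; for example, write $(t-\theta)^{-Hd/p}$ in terms of $\theta$ when $t-\theta\ge\theta$, and otherwise split the range. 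This case split, $t-\theta\gtrless\theta$, is where we would spend the most care.
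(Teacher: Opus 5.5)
You have a genuine gap: your argument does not reach the stated bound. After H\"older in each spatial variable and the Dirichlet integral you get $J\le C\theta^{-2Hd/p}(t-\theta)^{\alpha+2-2H-\frac2q-\frac{Hd}{p}}$. Its $(t-\theta)$-exponent falls short of the claimed one by the fixed amount $\frac{Hd}{p}-\varepsilon>0$.

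Neither proposed repair works. First, $\varepsilon$ is arbitrarily small, so it cannot absorb the fixed loss $\frac{Hd}{p}$. Second, the split $t-\theta\gtrless\theta$ fails exactly when $t-\theta<\theta$: for $\theta$ bounded away from $0$ and $t-\theta\downarrow 0$, the factor $(t-\theta)^{-Hd/p}$ is not bounded by $C\theta^{-a}(t-\theta)^{-\varepsilon}$ for any $a$. That regime, $\theta\uparrow t$, is the one the lemma exists for; the remark following it explains that its purpose is to move this singularity to $\theta\downarrow0$.

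The loss comes from your own estimate. Taking the $L^{p'}$-norm of the transition Gaussian $(s_2-s_1)^{-Hd}e^{-|x_2-x_1|^2/(4(s_2-s_1)^{2H})}$ is what produces $(s_2-s_1)^{-Hd/p}$. As you note yourself, this factor cannot be traded for a power of $\theta$ when $s_2-s_1\ll s_1$.

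The paper splits $\Delta^2_{\theta,t}$ into two regions. On $\Delta_1=\{s_1\le s_2-s_1\}$ one has $(s_2-s_1)^{-Hd/p}\le s_1^{-Hd/p}$, and essentially your computation plus Lemma \ref{appendix-lm.2} gives the bound. On $\Delta_2=\{s_2-s_1<s_1\}$ the paper uses a box decomposition in space and a Plancherel argument; your proposal has no counterpart to this step.

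A shorter repair uses the hypothesis $p\ge2$, which your argument never touches:
\begin{itemize}
\item Estimate the transition kernel in $L^1$ rather than $L^{p'}$. By Young's inequality, $\Vert\,|b_2(s_2,\cdot)|*k_{s_2-s_1}\Vert_{L^p}\le C\Vert b_2\Vert_{L^p_x}(s_2)$, where $k_r(y)=r^{-Hd}e^{-|y|^2/(4r^{2H})}$ and $\Vert k_r\Vert_{L^1}=(4\pi)^{d/2}$.
\item Apply H\"older with $\frac1p+\frac1p+\frac{p-2}{p}=1$, putting both spatial losses on the first Gaussian: $\Vert s_1^{-Hd}e^{-|\cdot|^2/(4s_1^{2H})}\Vert_{L^{p/(p-2)}}=Cs_1^{-2Hd/p}\le C\theta^{-2Hd/p}$, read as $L^\infty$ when $p=2$.
\item The time integrand becomes $\theta^{-2Hd/p}\Vert b_1\Vert_{L^p_x}(s_1)\Vert b_2\Vert_{L^p_x}(s_2)(s_1-\theta)^{-H}(s_2-s_1)^{-H}(t-s_2)^{\alpha}$.
\item Your time-H\"older and Dirichlet step then applies, and is finite since $q'H<1$. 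It yields $(t-\theta)^{\alpha+2-2H-2/q}$, even without the $\varepsilon$-loss.
\end{itemize}
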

		\begin{remark}
			Before presenting the proof, we first explain why we need this lemma. If we apply the H\"older inequality straightforwardly, we can only find 
			$$J\leq C_{H,d,p,q}\Vert b_1\Vert_{L_x^pL_t^q}\Vert b_2\Vert_{L_x^pL_t^q}\left(\int_\theta^ts_1^{-q'\frac{Hd}p}(t-s_1)^{q'(\alpha+1-H-\frac{Hd}p-\frac1q)}\d s_1\right)^{\frac1{q'}}.$$
			By Lemma \ref{appendix-lm.2}, the exponent of $(t-\theta)$ is at most $\alpha+2-H-Hd/p-2/q$. Conversely, Lemma \ref{sec4-lm.1} shows that this exponent can be improved to be $\alpha+2-2H-2/q$. In this way, Lemma \ref{sec4-lm.1} transfers the potential singularity as $\theta\uparrow t$ to that at $\theta\downarrow0$, in particular for large values of $p$. Further details can be found in the proof of Lemma \ref{sec4-lm.4} and \ref{sec4-lm.4}.
		\end{remark}
		\begin{proof}
			Firstly, we partition the domain of integration into two parts $\Delta_{\theta,t}^2=\Delta_1\cup\Delta_2$, where the sets $\Delta_1$ and $\Delta_2$ are defined by
			$$\Delta_1:=\{(s_1,s_2)\in\Delta_{\theta,t}^2:s_1\leq s_2-s_1\},$$
			$$\Delta_2:=\{(s_1,s_2)\in\Delta_{\theta,t}^2:s_2-s_1< s_1\}.$$
			Denote by $J_i$ the integral over $\Delta_i$. For $J_{\Delta_1}$, applying the H\"oder inequality gives
			\begin{align}\label{sec4-eq.2}
				J_1&\leq C_{H,d,p}\int_{\Delta_1}\Vert b_1\Vert_{L^p_x}(s_1)\Vert b_2\Vert_{L^p_x}(s_2)\frac{1}{s_1^{H+\frac{Hd}p}}\frac{1}{(s_2-s_1)^{H+\frac{Hd}p}}(t-s_2)^\alpha\d s\nonumber\\
				&\leq C_{H,d,p}\int_{\Delta_1}\Vert b_1\Vert_{L^p_x}(s_1)\Vert b_2\Vert_{L^p_x}(s_2)\frac{1}{s_1^{H+\frac{2Hd}p}}\frac{1}{(s_2-s_1)^H}(t-s_2)^\alpha\d s_1\d s_2\nonumber\\
				&\leq C_{H,d,p}\Vert b_2\Vert_{L^p_xL^q_t}\int_\theta^t\Vert b_1\Vert_{L^p_x}(s_1)\frac{1}{s_1^{H+\frac{2Hd}p}}(t-s_1)^{\alpha+1-H-\frac1q}\d s_1\nonumber\\
				&\leq C_{H,d,p}\Vert b_1\Vert_{L^p_xL^q_t}\Vert b_2\Vert_{L^p_xL^q_t}\left(\int_\theta^t s_1^{-q'(H+\frac{2Hd}p)}(t-s_1)^{q'(\alpha+1-H-\frac 1q)}\d s_1\right)^{\frac 1{q'}}\nonumber\\
				&\leq C_{H,d,p,q,\varepsilon}\Vert b_1\Vert_{L^p_xL^q_t}\Vert b_2\Vert_{L^p_xL^q_t}\theta^{-\frac{2Hd}{p}}(t-\theta)^{\alpha+2-2H-\frac 2q-\varepsilon}
			\end{align}
			where for the second inequality we use the fact that $s_2-s_1\geq s_1$ in the set $\Delta_1$, and for the last we use Lemma \ref{appendix-lm.2} since the condition $1/q<1-H-Hd/p<1-H$ ensures $1/q'=1-1/q> H$. 
			
			Then, we decompose $J_2$ as $J_{21}+J_{22}$ where $J_{21}$ is obtained from $J_2$ by restricting the domain of $\d x_1\d x_2$-integration to a set of points $(x_1,x_2)$ such that $|s_2-s_1|/s_1^{2H}$ stays away from zero. For this purpose, we define 
			$$B_k(s):=\{x=(x^{(1)},\cdots,x^{(d)})\in\R^d:x^{(i)}/s^H\in[k^{(i)},k^{(i+1)}),i=1,\cdots,d\}$$ 
			for a $d$-dimensional vector $k=(k^{(1)},\cdots,k^{(d)})$ and set
			$$\mathcal{S}:=\left\{(k,\ell):k,\ell\in\mathbb{Z}^d,|k|_1\notin\Big[|\ell|_1-4d,2^H|\ell|_1+4d\Big]\right\}.$$ 
			where we write $|k|_1=\sum_{i=1}^dk^{(i)}$ for the $L^1$-norm of $k$. In this way, $J_2$ can be decomposed as
			$$J_{21}=\sum_{(k,\ell)\in\mathcal{S}}J(k,\ell),\qquad J_{22}=\sum_{(k,\ell)\in\mathcal{S}^c}J(k,\ell),$$
			where 
			\begin{align*}J(k,l)=
			&\int_{\Delta_2}\int_{B_k(s_1)\times B_\ell(s_2)}b_1(s_1,x_1)b_2(s_2,x_2)\\
			&\qquad\times\frac{1}{s_1^{H(1+d)}}e^{-\frac{|x_1|^2}{4s_1^{2H}}}\frac{1}{(s_2-s_1)^{H(1+d)}}e^{-\frac{|x_2-x_1|^2}{4(s_2-s_1)^{2H}}}(t-s_2)^\alpha\d x\d s.
			\end{align*}
			
			\textbf{For} $\boldsymbol{J_{21}}$. Note that $x\in B_k(s)$ implies $0\leq x^{(i)}-s^H k^{(i)}<s^H$ for $i=1,\cdots,d$, and hence $|x-s^{2H}k|_1<ds^H$. For $(x_1,x_2)\in B_k(s_1)\times B_{\ell}(s_2)$, it follows that either $|k|_1>2^{H}|\ell|_1+4d$ or $|\ell|_1>|k|_1+4d$. If the former occurs, then
			\begin{align*}
				|x_1-x_2|_1&=|(x_1-s_1^Hk)-(x_2-s_2^H\ell)+s_1^{H}k-s_2^{H}\ell|_1\\
				&\geq |s_1^Hk-s_2^H\ell|_1-|x_1-s_1^Hk|_1-|x_2-s_2^H|_1\\
				&\geq s_1^H|k|_1-s_2^H|\ell|_1-d(s_1^H+s_2^H)\\
				&\geq s_1^H|k|_1-(2s_2)^H|\ell|_1-d(2^H+1)s_1^H\\
				&\geq\left(|k|_1-2^H|\ell|_1-(2^H+1)d\right)s_1^{2H}\geq ds_1^{2H},
			\end{align*}
			where we use $s_2<2s_1$ in the set $\Delta_2$ for the third inequality. If the latter occurs, similarly, we have
			\begin{align*}
				|x_1-x_2|_1&\geq |\ell|s_2^H-|k|_1s_1^H-d(s_1^H+s_2^H)\\
				&\geq \left(|\ell|_1-|k|_1-(2^H+1)d\right)s_1^H\geq ds_1^{2H}.
			\end{align*}
			In summary, we have
			$$|x_1-x_2|^2\geq d^{-1}|x_2-x_1|_1^2\geq ds_1^{2H}$$
			when $(x_1,x_2)\in B_k(s_1)\times B_{\ell}(s_2)$ for some $k,\ell\in\mathcal{S}$. From this and the H\"older inequality, we have
			\begin{align}\label{sec4-eq.3}
				J_{21}&=\sum_{(k,\ell)\in\mathcal{S}}\int_{\Delta_2}\int_{B_k(s_1)\times B_\ell(s_2)}b_1(s_1,x_1)b_2(s_2,x_2)\nonumber\\
				&\qquad\times\frac{1}{s_1^{H(1+d)}}e^{-\frac{|x_1|^2}{4s_1^{2H}}}\frac{1}{(s_2-s_1)^{H(1+d)}}e^{-\frac{|x_2-x_1|^2}{4(s_2-s_1)^{2H}}}(t-s_2)^\alpha\d x\d s\nonumber\\
				&=\sum_{(k,\ell)\in\mathcal{S}}\int_{\Delta_2}\int_{B_k(s_1)\times B_\ell(s_2)}b_1(s_1,x_1)b_2(s_2,x_2)\nonumber\\
				&\qquad\times\frac{1}{s_1^{H(1+d)}}e^{-\frac{|x_1|^2}{4s_1^{2H}}}\frac{1}{|x_1-x_2|^{\frac dp}}\frac{|x_1-x_2|^{\frac dp}}{(s_2-s_1)^{H(1+d)}}e^{-\frac{|x_2-x_1|^2}{4(s_2-s_1)^{2H}}}(t-s_2)^\alpha\d x\d s\nonumber\\
				&\leq \int_{\Delta_{\theta,t}^2}\int_{\R^d}b_1(s_1,x_1)\frac{1}{s_1^{H+Hd+\frac{Hd}p}}e^{-\frac{|x_1|^2}{4s_1^{2H}}}\d x_1\nonumber\\
				&\qquad\times\int_{\R^d}b_2(s_2,x_2)\frac{|x_1-x_2|^{\frac{Hd}p}}{(s_2-s_1)^{H(1+d)}}e^{-\frac{|x_2-x_1|^2}{4(s_2-s_1)^{2H}}}(t-s_2)^\alpha\d x_2\d s\nonumber\\
				&\leq C_{H,d,p}\int_{\Delta_{\theta,t}^2}\Vert b_2\Vert_{L^p_x}(s_2)(t-s_2)^\alpha\int_{\R^d}b_1(s_1,x_1)\frac{1}{s_1^{H+Hd+\frac{Hd}p}}e^{-\frac{|x_1|^2}{4s_1^{2H}}}\d x_1\d s\nonumber\\
				&\leq C_{H,d,p}\int_\theta^t\Vert b_1\Vert_{L^p_x}(s_1)\frac{1}{s_1^{H+\frac{2Hd}p}}\d s_1\int_{s_1}^t\Vert b_2\Vert_{L^p_x}(s_2)(t-s_2)^\alpha\d s_2\nonumber\\
				&\leq C_{H,d,p}\Vert b_1\Vert_{L^p_xL^q_t}\Vert b_2\Vert_{L^p_xL^q_t}\left(\int_\theta^t s_1^{-q'(H+\frac{2Hd}p)}(t-s_1)^{q'(\alpha+1-H-\frac 1q)}\d s_1\right)^{\frac 1{q'}}\nonumber\\
				&\leq C_{H,d,p,q,\varepsilon}\Vert b_1\Vert_{L^p_xL^q_t}\Vert b_2\Vert_{L^p_xL^q_t}\theta^{-\frac{2Hd}{p}}(t-\theta)^{\alpha+2-2H-\frac 2q-\varepsilon}
			\end{align} 
			We obtain the bound \eqref{sec4-eq.1} for $J_{21}$. It remains to estimate $J_{22}$.
			
			\textbf{For} $\boldsymbol{J_{22}}$. We define the heat kernel for fBm as
			$$p^H(t,x)=\frac{1}{(4\pi t)^{Hd}}\exp\left(\frac{|x|^2}{4t^{2H}}\right)$$
			for $t>0$. Since $s_1>s_2-s_1$ in the set $\Delta_2$, we have
			\begin{align*}
				J(k,\ell)&\leq\int_{\Delta_2}\int_{B_k(s_1)\times B_\ell(s_2)}b_1(s_1,x_1)b_2(s_2,x_2)\\
				&\qquad\times\frac{1}{s_1^{Hd}}e^{-\frac{|x_1|^2}{4s_1^{2H}}}\frac{1}{(s_2-s_1)^{H(2+d)}}e^{-\frac{|x_2-x_1|^2}{4(s_2-s_1)^{2H}}}(t-s_2)^\alpha\d x\d s\\
				&\leq C_{H,d} \int_{\Delta_2}\frac{1}{s_1^{Hd}}(t-s_2)^\alpha\int_{B_k(s_1)\times B_\ell(s_2)}b_1(s_1,x_1)b_2(s_2,x_2)\\
				&\qquad\times e^{-\frac{|x_1|^2}{4s_1^{2H}}}\partial_x^2p^H(s_2-s_1,x_2-x_1)\d x\d s
			\end{align*}
			Set 
			\begin{align*}
				b_{1k}(s_1,x_1)&=b_1(s_1,x_1)\mathbbm{1}_{\{x_1\in B_k(s_1)\}}e^{-\frac{|x_1|^2}{4s_1^{2H}}}\\
				b_{2\ell}(s_2,x_2)&=b_1(s_2,x_2)\mathbbm{1}_{\{x_1\in B_\ell(s_2)\}}
			\end{align*}
			Then, $I(k,\ell)$ is bounded by
			$$\tilde{J}(k,\ell):=\int_{\Delta_2}\frac{1}{s_1^{Hd}}(t-s_2)^\alpha\int_{\R^{2d}}b_{1k}(s_1,x_1)b_{2\ell}(s_2,x_2)\partial_x^2p^H(s_2-s_1,x_2-x_1)\d x\d s.$$
			By Plancheral's formula and the elementary inequality $|ab|\leq\delta^{-1}|a|^2+\delta|b|^2$ for $\delta>0$, we have
			\begin{align*}
				\tilde{J}(k,\ell)&=\int_{\Delta_2}\frac{1}{s_1^{Hd}}(t-s_2)^\alpha\d s\int_{\R^d}\hat{b}_{1k}(s_1,\xi)\overline{\hat{b}_{2\ell}(s_1,\xi)}|\xi|^2e^{-(s_2-s_1)^{2H}|\xi|^2}\d \xi\\
				&\leq \int_{\Delta_2}\frac{1}{s_1^{Hd}}(t-s_2)^\alpha\d s\int_{\R^d}\left(\delta^{-1}|\hat{b}_{1k}(s_1,\xi)|^2+\delta|\hat{b}_{2\ell}(s_2,\xi)|^2\right)\\
				&\qquad\qquad\times |\xi|^2e^{-(s_2-s_1)^{2H}|\xi|^2}\d \xi\\
				&=:\tilde{J}_1(k)+\tilde{J}_2(\ell).
			\end{align*}
			
			For the $\tilde{J}_1(k)$ term, we have
			\begin{align*}
				\tilde{J}_1(k)&\leq\delta^{-1}\int_\theta^ts_1^{-Hd}(t-s_1)^\alpha\d s_1\int_{\R^d}|\hat{b}_{1k}(s_1,\xi)|^2\d \xi\\
				&\qquad\times\int_{s_1}^t(s_2-s_1)^{1-2H}(s_2-s_1)^{2H-1}|\xi|^2e^{(s_2-s_1)^{2H}|\xi|^2}\d s_2\\
				&\leq \delta^{-1}\int_\theta^ts_1^{-Hd}(t-s_1)^{\alpha+1-2H}\d s_1\int_{\R^d}|b_{1k}(s_1,x_1)|^2\d x_1\\
				&\leq C_{H,d}\delta^{-1} e^{-\frac14|k|_1^2}\int_\theta^t s_1^{-Hd}(t-s_1)^{\alpha+1-2H}\d s_1\int_{B_k(s_1)}|b_1(s_1,x_1)|^2\d x_1\\
				&\leq C_{H,d}\delta^{-1} e^{-\frac14|k|_1^2}\int_\theta^t s_1^{-Hd}(t-s_1)^{\alpha+1-2H}\\
				&\qquad\times|B_k(s_1)|^{1-\frac2p}\left(\int_{B_k(s_1)}|b_1(s_1,x_1)|^p\right)^{\frac2p}\d x_1\d s_1\\
				&\leq C_{H,d,p}\delta^{-1} e^{-\frac14|k|_1^2}\int_\theta^t\Vert b_1\Vert^2_{L^p_x}(s_1) s_1^{-\frac{Hd}p}(t-s_1)^{\alpha+1-2H}\d s_1\\
				&\leq C_{H,d,p,q}\delta^{-1} e^{-\frac14|k|_1^2}\Vert b_1\Vert^2_{L^p_xL^q_t}\theta^{-\frac{Hd}p}(t-\theta)^{\alpha+2-2H-\frac 2q}.
			\end{align*}
			The second inequality relies on the Parseval identity and the conditions $1-2H>0,t-s_2<t-s_1$. The fourth and final inequalities are obtained by the H\"older inequality. We now proceed to explain the third and penultimate inequalities. Note that
			$$|x/s^H|_1=|x/s^H-k+k|_1\geq\big||k|_1-|x/s^H-k|_1\big|.$$
			By the definition of $B_k(s_1)$, we know that 
			$|x/s_1^H-k|_1<d$ when $x\in B_k(s_1)$. Hence, there are only finite $k\in\mathbb{Z}^d$ and the corresponding sets $B_k(s_1)$ such that $\big||k|_1-|x-k|_1\big|\geq|k|_1/2$ fails for $x\in B_k(s_1)$, whose contribution can be absorbed into a constant $C$. Overall, we have
			$$|b_{1k}(s_1,x_1)|^2=|b_1(s_1,x_1)|^2\mathbbm{1}_{\{x_1\in B_k(s_1)\}}e^{-\frac{|x_1|^2}{2s_1^{2H}}}\leq C|b_1(s_1,x_1)|^2\mathbbm{1}_{\{x_1\in B_k(s_1)\}}e^{-\frac14|k|_1^2}$$
			which is exactly the third inequality. In the penultimate inequality, $|B_k(s)|$ denotes the volume of the set $B_k(s)$, which is approximately equal to $s^{Hd}$. 
			
			For the $\tilde{J}(\ell)$ term, similarly, 
			\begin{align*}
				\tilde{J}_2(\ell)&\leq C_{H,d}\delta\int_\theta^ts_2^{-Hd}(t-s_2)^\alpha\d s_2\int_{\R^d}|\hat{b}_{2\ell}(s_2,\xi)|^2\d \xi\\
				&\qquad\times\int_\theta^{s_2}(s_2-s_1)^{1-2H}(s_2-s_1)^{2H-1}|\xi|^2e^{(s_2-s_1)^{2H}|\xi|^2}\d s_1\\
				&\leq C_{H,d}\delta\int_\theta^ts_2^{-Hd}(t-s_2)^{\alpha+1-2H}\d s_2\int_{B_\ell(s_2)}|b_2(s_2,x_2)|^2\d x_2\\
				&\leq C_{H,d,p}\delta\int_\theta^t\Vert b_2\Vert^2_{L^p_x}(s_1) s_2^{-\frac{Hd}p}(t-s_s)^{\alpha+1-2H}\d s_2\\
				&\leq C_{H,d,p,q}\delta\Vert b_2\Vert^2_{L^p_xL^q_t}\theta^{-\frac{Hd}p}(t-\theta)^{\alpha+2-2H-\frac 2q}
			\end{align*}
			where we use the fact that $s_2\geq2s_2$ in the set $\Delta_2$ for the first inequality. In summary, by choosing $\delta=e^{-|k|_1^2/8}$, we have
			$$J(k,\ell)\leq C_{H,p,d,q}e^{-\frac{|k|_1^2}8}\left[\Vert b_1\Vert^2_{L^p_xL^q_t}+\Vert b_2\Vert^2_{L^p_xL^q_t}\right]\theta^{-\frac{Hd}p}(t-\theta)^{\alpha+2-2H-\frac 2q}$$
			For $(k,\ell)\in\mathcal{S}^c$, we know that 
			$$\frac{|k|_1-4d}{2^H}\leq|\ell|_1\leq |k|_1+4d,$$
			which means that the number of admissible $\ell$ for a given $k$ is bounded by a polynomial $P(|k|_1)$. Consequently, we have
			\begin{align}\label{sec4-eq.4}
				J_{22}&=\sum_{(k,\ell)\in\mathcal{S}^c}J(k,\ell)\nonumber\\
				&\leq C_{H,p,d,q}\left[\Vert b_1\Vert^2_{L^p_xL^q_t}+\Vert b_2\Vert^2_{L^p_xL^q_t}\right]\theta^{-\frac{Hd}p}(t-\theta)^{\alpha+2-2H-\frac 2q}\sum_{k\in\mathbb{Z}^d}P(|k|_1)e^{-\frac{|k|^2}8}\nonumber\\
				&\leq C(H,d,p,q,\Vert b_1\Vert_{L^p_xL^q_t},\Vert b_2\Vert_{L^p_xL^q_t})\theta^{-\frac{2Hd}{p}}(t-\theta)^{\alpha+2-2H-\frac 2q}.
			\end{align}
			Combining \eqref{sec4-eq.2}-\eqref{sec4-eq.4} gives the desired estimate \eqref{sec4-eq.1} for $J$. The proof is complete.
		\end{proof}
		
		In the sequel of this section, we let $\{b_n\}_{n=1}^\infty\subset C_c^\infty([0,T]\times\R^d)$ denote the approximating sequence of $b$ such that $\lim_{n\rightarrow\infty}\Vert b_n-b\Vert_{L^p_xL^q_t}=0$ and $\sup_n\Vert b_n\Vert_{L^p_xL^q_t}\leq \Vert b\Vert_{L^p_xL^q_t}$. By standard results on SDEs, for each smooth coefficient $b_n$, there exists a unique strong solution $X_{\cdot}^n$ to the SDE
 	 	$$\d X_t^n=b_n(t,X_t^n)\d t+\d B_t^H,\qquad X_0^n=x\in\R^d.$$
	 	
	 	Recall that $\D^H$ is the Malliavin derivative operator with respect to $B^H$. By the chain rule of the Malliavin derivative, we have
		$$\D^H_\theta X_t^n=I_d+\int_{\theta}^t\nabla b_n(s,X_s^n)\D^H_\theta X_s^n\d s$$
		for a.e. $0<\theta<t$, and $\D^H_\theta X_t=0$ for a.e. $0<t\leq\theta$. By iteration, we obtain
		\begin{equation}\label{sec4-eq.5}
			\D^H_\theta X_t^n=I_d+\sum_{m=1}^\infty\int_{\Delta_{\theta,t}^m}\prod_{j=1}^m\nabla b_j(s_j,X_{s_j}^n)\d s. 
		\end{equation}
		Similarly, 
		\begin{align}\label{sec4-eq.6}
			\D^H_\theta X_t^n-\D^H_{\theta'}X_t^n&=\int_\theta^t\nabla b_n(s,X_s^n)\D^H_\theta X_s^n\d s-\int_{\theta'}^t\nabla b_n(s,X_s^n)\D^H_{\theta'} X_s^n\d s\nonumber\\
			&=\int_\theta^t\nabla b_n(s,X_s^n)\left(\D^H_\theta X_t^n-\D^H_{\theta'} X_t^n\right)\d s+\int_{\theta'}^\theta\nabla b_n(s,X_s)\D_{\theta'}^HX_\theta^n\d s\nonumber\\
			&=\int_\theta^t\nabla b_n(s,X_s^n)\left(\D^H_\theta X_t^n-\D^H_{\theta'} X_t^n\right)\d s+\D_{\theta'}^HX_\theta^n-I_d\nonumber\\
			&=\cdots=\left(I_d+\sum_{m=1}^\infty\int_{\Delta_{\theta,t}^m}\prod_{j=1}^m\nabla b_j(s_j,X_{s_j}^n)\d s\right)\left(\D_{\theta'}^HX_\theta^n-I_d\right)\nonumber\\
			&=\left(I_d+\sum_{m=1}^\infty\int_{\Delta_{\theta,t}^m}\prod_{j=1}^m\nabla b_j(s_j,X_{s_j}^n)\d s\right)\left(\sum_{m=1}^\infty\int_{\Delta_{\theta',\theta}^m}\prod_{j=1}^m\nabla b_j(s_j,X_{s_j}^n)\d s\right).
		\end{align}
		for a.e. $0<\theta'<\theta<t$. Next, we proceed to compute the quantity
		$$J_r=\left\Vert\sum_{m=1}^\infty\int_{\Delta_{\theta,t}^m}\prod_{j=1}^m\nabla b_j(s_j,x+B_{s_j}^H)\d s\right\Vert^r$$
		for some integers $r\geq1$. By a straightforward calculation, we have
		\begin{align*}
			J_r&\leq  C\sum_{m=1}^\infty\sum_{i,j=1}^d\sum_{k_1,\cdots,k_{m-1}}^d\Bigg(\int_{\Delta_{\theta,t}^m}\frac{\partial}{\partial x_{k_1}}b_n^{(i)}(s_1,x+B_{s_1}^H)\frac{\partial}{\partial x_{k_2}}b_n^{(k_1)}(s_2,x+B_{s_2}^H)\cdots\\
			&\qquad\qquad\qquad\times\frac{\partial}{\partial x_{k_{m-2}}}b_n^{(k_{m-1})}(s_{m-1},x+B_{s_{m-1}}^H)\frac{\partial}{\partial x_j}b_n^{(k_{m-1})}(s_m,x+B_{s_m}^H)\d s\Bigg)^r.
		\end{align*}
		By shuffling,
		\begin{align*}&\Bigg(\int_{\Delta_{\theta,t}^m}\frac{\partial}{\partial x_{k_1}}b_n^{(i)}(s_1,x+B_{s_1}^H)\frac{\partial}{\partial x_{k_2}}b_n^{(k_1)}(s_2,x+B_{s_2}^H)\cdots\\
		&\qquad\qquad\qquad\;\;\times\frac{\partial}{\partial x_{k_{m-2}}}b_n^{(k_{m-1})}(s_{m-1},x+B_{s_{m-1}}^H)\frac{\partial}{\partial x_j}b_n^{(k_{m-1})}(s_m,x+B_{s_m}^H)\d s\Bigg)^r
		\end{align*}
		can be rewritten as a sum of at most $r^{rm}$ terms of the form
		$$\int_{\Delta_{\theta,t}^{rm}}\frac{\partial}{\partial x_{\ell_1'}}b_n^{(\ell_1)}(s_1,x+B_{s_1}^H)\frac{\partial}{\partial x_{\ell_2'}}b_n^{(\ell_2)}(s_2,x+B_{s_2}^H)\cdots\frac{\partial}{\partial x_{\ell_{rm}'}}b_n^{(\ell_{rm})}(s_{rm},x+B_{s_{rm}}^H)\d s.$$
		This is because there are $\binom{rm}{m}\binom{(r-1)m}{m}\cdots\binom{m}{m}=\frac{(rn)!}{(n!)^r}\leq r^{rn}$ ways to choose $m$ elements at a time from $r\times m$ elements, repeated $r$ times.
		
		Consequently, we have
		\begin{align}\label{shuffle}
			J_r\leq  C\sum_{m=1}^\infty\sum_{i,j=1}^d\sum_{k_1,\cdots,k_{m-1}}^d\sum_{\ell,\ell'}\int_{\Delta_{\theta,t}^{rm}}\frac{\partial}{\partial x_{\ell_1'}}b_n^{(\ell_1)}(s_1,x+B_{s_1}^H)\cdots\frac{\partial}{\partial x_{\ell_{rm}'}}b_n^{(\ell_{rm})}(s_{rm},x+B_{s_{rm}}^H)\d s
		\end{align}
		\begin{lemma}\label{sec4-lm.2}
			Assume \eqref{cond1} and $H<1/2$. Let $\{b_n\}_{n=1}^\infty\subset C_c^\infty([0,T]\times\R^d)$ be the approximating sequence of $b$ and $X_t^n$ be the corresponding strong solutions to \eqref{sec1-eq.1} with $b$ replaced by $b_n$. Then, for any integer $r\geq1$, we have
			\begin{equation}
				\sup_n\E\Vert \boldsymbol{\mathrm{D}}^H_\theta X_t^n\Vert^r\leq C(H,d,p,q,T,r,\Vert b\Vert_{L^p_xL^q_t}).
			\end{equation}
		\end{lemma}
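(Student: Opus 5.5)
Our plan is to bound the series \eqref{sec4-eq.5} in $L^r(\mathbb{P})$ uniformly in $n$. We transfer the expectation to the fBm itself via the Girsanov transform of Theorem \ref{sec3-thm.1}, and then estimate the block integrals with the density bounds of Section \ref{sec2}.

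\textbf{Step 1 (removing the drift).} Under $\tilde{\mathbb{P}}$ built from $b_n$, the process $X^n_t-x$ is an fBm. Applying H\"older with the uniform moment bound \eqref{sec3-eq.7} (its constant depends only on $\Vert b_n\Vert_{L^p_xL^q_t}\leq\Vert b\Vert_{L^p_xL^q_t}$) gives
$$\E\Vert\D^H_\theta X_t^n\Vert^r\leq C\Big(\E\Big\Vert I_d+\sum_{m\geq1}\int_{\Delta^m_{\theta,t}}\prod_{j=1}^m\nabla b_n(s_j,x+B^H_{s_j})\d s\Big\Vert^{2r}\Big)^{1/2}.$$
So it suffices to bound $\E J_{2r}$, with $J_r$ as defined before the lemma, uniformly in $n$. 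Using \eqref{shuffle}, $\E J_{2r}$ is dominated by a sum over $m$ of at most $d^{C m}(2r)^{2rm}$ terms of the form
$$\Big|\E\int_{\Delta^{2rm}_{\theta,t}}\prod_{j=1}^{2rm}\partial_{\ell_j'}b_n^{(\ell_j)}(s_j,x+B^H_{s_j})\d s\Big|.$$

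\textbf{Step 2 (block estimate).} For fixed $s$, we integrate by parts in the spatial variables, moving all derivatives onto the joint density of $(B^H_{s_1},\dots,B^H_{s_N})$ with $N=2rm$. By Remark \ref{sec2-rm.1} and Corollary \ref{sec2-col.1} (which applies pointwise in time after a H\"older step in $x$), the integrand is bounded by
$$C^N\prod_j\Vert b_n\Vert_{L^p_x}(s_j)(s_j-s_{j-1})^{-H-Hd/p}.$$
Integrating over the simplex with H\"older in time (exponent $q$) and the Dirichlet/Beta computation of Lemma \ref{appendix-lm.3} yields
$$C^N\Vert b\Vert^N_{L^p_xL^q_t}(t-\theta)^{N\kappa'}\Gamma(N\kappa'+1)^{-1/q'},\qquad \kappa'=1-H-\tfrac{Hd}{p}-\tfrac1q=\kappa>0,$$
with $q'$ adjustments as in Lemma \ref{sec3-lm.1}. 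The condition $q'(H+Hd/p)<1$ needed here follows from \eqref{cond1}. Multiplying by the combinatorial factor $C^m(2r)^{2rm}$, the resulting series in $m$ is $\sum_m C^m/\Gamma(2rm\kappa'+1)^{1/q'}<\infty$ by Stirling's formula. This gives $\sup_n\E J_{2r}\leq C$ and hence the lemma.

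\textbf{Main obstacle.} The main obstacle is the first-order singularity $(s_j-s_{j-1})^{-H}$ produced by each derivative. Putting all $N$ derivatives on consecutive increments requires $q'(H+Hd/p)<1$, which is precisely where \eqref{cond1} is used. A subtlety is that the naive bound in Remark \ref{sec2-rm.1} puts each $\partial_{x_j}$ on the increment $x_j-x_{j-1}$ only. Since $p_{t_1,\dots,t_N}$ depends on the differences, a derivative in $x_j$ also hits the factor involving $x_{j+1}-x_j$; we handle this by working with $\tilde p$ and the change of variables, as in Corollary \ref{sec2-col.1}. If the simplex Gamma-decay turns out to be too weak against $(2r)^{2rm}$, we would fall back on the dyadic decomposition of Lemma \ref{sec3-lm.1}, conditioning on $\mathcal{F}_{s_0}$ via Remark \ref{sec2-rm.2}, which yields factorial decay $(N!)^{-\kappa+\varepsilon}$.
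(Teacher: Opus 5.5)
Your outline follows the paper's own proof of this lemma step for step: Girsanov plus Cauchy--Schwarz as in \eqref{sec4-eq.9}, the shuffle bound \eqref{shuffle}, Corollary~\ref{sec2-col.1} applied pointwise in time, H\"older in time with Lemma~\ref{appendix-lm.3}, then Stirling. Writing the constant as $C^N$ is the correct form. Your direct Dirichlet integral on $\Delta^N_{\theta,t}$, using $s_1^{-a}\le(s_1-\theta)^{-a}$, is a harmless substitute for Lemma~\ref{appendix-lm.2}.

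The trouble is the step you yourself flag as a ``subtlety''. It is a genuine gap, and passing to $\tilde p$ does not close it. In increment coordinates $y_j=x_j-x_{j-1}$ one has $\partial_{x_j}=\partial_{y_j}-\partial_{y_{j+1}}$ for $j<N$. So $\partial_{x_1}\cdots\partial_{x_N}p_{s_1,\dots,s_N}$ is a sum of $2^{N-1}$ terms $\pm\partial_{y_1}^{k_1}\cdots\partial_{y_N}^{k_N}\tilde p_{s_1,\dots,s_N}$ with $k_j\in\{0,1,2\}$ and $\sum_j k_j=N$. Even granting Proposition~\ref{sec2-thm.1}, a term with $k_j=2$ carries $(s_j-s_{j-1})^{-2H-Hd/p}$. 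Your H\"older/Dirichlet step then requires $2H+\frac{Hd}{p}+\frac1q<1$, which is strictly stronger than \eqref{cond1} (e.g.\ $H=0.4$, $d/p=1$, $q=\infty$).

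This is not an artifact of the expansion: the pointwise-in-time bound you need is false. Take $d=1$ and $h=s_2-s_1\ll s_1$. Write $B^H_{s_2}=\rho B^H_{s_1}+\sigma Z$ with $Z$ a standard Gaussian independent of $B^H_{s_1}$. Then $\rho\to1$ and, by local nondeterminism, $\sigma^2\asymp h^{2H}$. For $b_1(y)=\cos(\rho y/\sigma)$ and $b_2(y)=\cos(y/\sigma)$, a direct computation gives
\[
\E\big[b_1'(B^H_{s_1})\,b_2'(B^H_{s_2})\big]=\frac{\rho\,e^{-1/2}}{\sigma^2}\,\E\big[\sin^2(\rho B^H_{s_1}/\sigma)\big]\asymp h^{-2H},
\]
whereas Corollary~\ref{sec2-col.1} and Remark~\ref{sec2-rm.1} would give $O(s_1^{-H}h^{-H})$. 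A spatial cutoff gives the same picture for $p<\infty$.

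The proof of that corollary evaluates $b_j$ at positions while integrating against the increment density $\tilde p$. That is exactly the conflation you noticed, so citing the corollary does not rescue the step, and the paper's proof, which uses it in the same way, has the same gap. A correct argument must exploit cancellation in the time integrals over $\Delta^{N}_{\theta,t}$, as Davie's argument does in the Brownian case, rather than a pointwise density bound. Your fallback to the dyadic scheme of Lemma~\ref{sec3-lm.1} does not help, since that lemma involves no derivatives of $b$.
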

		\begin{proof}
			By \eqref{sec4-eq.5}, Theorem \ref{sec3-thm.1} and the Cauchy-Schwarz inequality, we have
			\begin{align}\label{sec4-eq.9}
				\E\Vert\D_\theta^H X_t^n\Vert^r&=\E^{\mathbb{P}}\left\Vert I_d+\sum_{m=1}^\infty\int_{\Delta_{\theta,t}^m}\prod_{j=1}^m\nabla b_j(s_j,X_{s_j}^n)\d s\right\Vert^r\nonumber\\
				&=\E^{\tilde{\mathbb{P}}}\Bigg[\left\Vert I_d+\sum_{m=1}^\infty\int_{\Delta_{\theta,t}^m}\prod_{j=1}^m\nabla b_j(s_j,x+\tilde{B}_{s_j}^H)\d s\right\Vert^r\left(\frac{\d \mathbb{P}}{\d\tilde{\mathbb{P}}}\right)\Bigg]\nonumber\\
				&\leq\left(\E^{\tilde{\mathbb{P}}}\left\Vert I_d+\sum_{m=1}^\infty\int_{\Delta_{\theta,t}^m}\prod_{j=1}^m\nabla b_j(s_j,x+\tilde{B}_{s_j}^H)\d s\right\Vert^{2r}\right)^{\frac12}\left[\E^{\tilde{\mathbb{P}}}\left(\frac{\d \mathbb{P}}{\d\tilde{\mathbb{P}}}\right)^2\right]^{\frac12}\nonumber\\
				&\leq C\left(\E\left\Vert I_d+\sum_{m=1}^\infty\int_{\Delta_{\theta,t}^m}\prod_{j=1}^m\nabla b_j(s_j,x+B_{s_j}^H)\d s\right\Vert^{2r}\right)^{\frac12}.
			\end{align}
			Then, \eqref{shuffle} gives
			\begin{align*}
				&\E\left\Vert I_d+\sum_{m=1}^\infty\int_{\Delta_{\theta,t}^m}\prod_{j=1}^m\nabla b_j(s_j,x+B_{s_j}^H)\d s\right\Vert^{2r}\\
				&\leq Cd^r+C\sum_{m=1}^\infty\sum_{i,j=1}^d\sum_{k_1,\cdots,k_{m-1}}^d\sum_{\ell,\ell'}\E\int_{\Delta_{\theta,t}^{2rm}}\frac{\partial}{\partial x_{\ell_1'}}b_n^{(\ell_1)}(s_1,x+B_{s_1}^H)\cdots\frac{\partial}{\partial x_{\ell_{2rm}'}}b_n^{(\ell_{2rm})}(s_{2rm},x+B_{s_{2rm}}^H)\d s.
			\end{align*}
			For each term in the sum, using Corollary \ref{sec2-col.1} and the H\"older inequality, we obtain
			\begin{align}\label{sec4-eq.10}
				&\left|\E\int_{\Delta_{\theta,t}^{2rm}}\frac{\partial}{\partial x_{\ell_1'}}b_n^{(\ell_1)}(s_1,x+B_{s_1}^H)\cdots\frac{\partial}{\partial x_{\ell_{2rm}'}}b_n^{(\ell_{2rm}')}(s_{2rm},x+B_{s_{2rm}}^H)\d s\right|\nonumber\\
				&\qquad\leq C_{H,d,p}\int_{\Delta_{\theta,t}^{2rm}}\Vert b_n^{(\ell_1)}\Vert_{L^p_x}(s_1)s_1^{-H-\frac{Hd}{p}} \prod_{j=2}^{2rm}\Vert b_n^{(\ell_j)}\Vert_{L^p_x}(s_j)(s_j-s_{j-1})^{-H-\frac{Hd}{p}}\d s\nonumber\\
				&\qquad\leq  C_{H,d,p,q}\frac{\Gamma\left(1-q'(H+\frac{Hd}{p})\right)^{\frac{2rm-1}{q'}}\Vert b_n\Vert_{L^p_xL^q_t}^{2rm-1}}{\Gamma\left(2(rm-1)(1-q'(H+\frac{Hd}{p}))+1\right)^{\frac1{q'}}}\nonumber\\
				&\qquad\qquad\qquad\qquad\quad\;\times\int_{\theta}^t\Vert b_n^{(\ell_1)}\Vert_{L^p_x}(s_1)s_1^{-H-\frac{Hd}{p}}(t-s_1)^{(2rm-1)(1-H-\frac{Hd}p-\frac1q)}\d s_1\nonumber\\
				&\qquad\leq  C_{H,d,p,q}\frac{\Gamma\left(1-q'(H+\frac{Hd}{p})\right)^{\frac{2rm-1}{q'}}\Vert b_n\Vert_{L^p_xL^q_t}^{2rm}}{\Gamma\left((2rm-1)(1-q'(H+\frac{Hd}{p}))+1\right)^{\frac1{q'}}}\nonumber\\
				&\qquad\qquad\qquad\qquad\quad\;\times\left(\int_\theta^ts_1^{-q'(H+\frac{Hd}p)}(t-s_1)^{q'(2rm-1)(1-H-\frac{Hd}p-\frac1q)}\d s_1\right)^{\frac1{q'}}\nonumber\\
				&\qquad\leq C_{H,d,p,q}\frac{\Gamma\left(1-q'(H+\frac{Hd}{p})\right)^{\frac{2rm-1}{q'}}\Vert b_n\Vert_{L^p_xL^q_t}^{2rm}}{\Gamma\left((2rm-1)(1-q'(H+\frac{Hd}p))+1\right)^{\frac1{q'}}}(t-\theta)^{2rm(1-H-\frac{Hd}p-\frac1q)-\varepsilon}.
			\end{align}
			where the second and final inequality is due to Lemma \ref{appendix-lm.3} and \ref{appendix-lm.2}, respectively. By Stirling's formula, there exists a constant $C$ such that
			$$\frac{\Gamma\left(1-q'(H+\frac{Hd}{p})\right)^{\frac{2rm-1}{q'}}}{\Gamma\left((2rm-1)(1-q'(H+\frac{Hd}p))+1\right)^{\frac1{q'}}}\leq C^{2rm-1}((2rm-1)\kappa)^{-(2rm-1)\kappa},$$
			where $\kappa=1-H-Hd/p-1/q$. This, combined with $\sup_n\Vert b_n\Vert_{L^p_xL^q_t}\leq \Vert b\Vert_{L^p_xL^q_t}$, completes the proof.
		\end{proof}
		\begin{lemma}\label{sec4-lm.3}
			Assume \eqref{cond1}, $p,q\geq2$ and $H<1/2$. Let $\{b_n\}_{n=1}^\infty$ and $X_t^n$ be the same quantities as in Lemma \ref{sec4-lm.2}. Then, for $0<\theta'<\theta<T$, there exists a constant $C=C(H,d,p,q,T,\Vert b\Vert_{L^p_xL^q_t})$ such that
			\begin{equation}
				\sup_n\E\Vert \boldsymbol{\mathrm{D}}^H_\theta X_t^n-\boldsymbol{\mathrm{D}}^H_{\theta'} X_t^n\Vert^2\leq C{\theta'}^{-\frac{Hd}{p}}(\theta-\theta')^{2-2H-\frac{Hd}p-\frac2q}
			\end{equation}
		\end{lemma}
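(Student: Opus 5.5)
We start from the factorization \eqref{sec4-eq.6}, which writes $\D^H_\theta X_t^n-\D^H_{\theta'}X_t^n$ as the product of $\D^H_\theta X_t^n$ with
$$\mathcal{R}^n_{\theta',\theta}:=\D^H_{\theta'}X_\theta^n-I_d=\sum_{m=1}^\infty\int_{\Delta^m_{\theta',\theta}}\prod_{j=1}^m\nabla b_n(s_j,X^n_{s_j})\d s .$$
By Cauchy--Schwarz,
$$\E\Vert \D^H_\theta X_t^n-\D^H_{\theta'}X_t^n\Vert^2\le \left(\E\Vert\D^H_\theta X^n_t\Vert^4\right)^{1/2}\left(\E\Vert\mathcal{R}^n_{\theta',\theta}\Vert^4\right)^{1/2}.$$
Lemma \ref{sec4-lm.2} bounds the first factor uniformly in $n$. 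So the task reduces to showing
$$\E\Vert\mathcal{R}^n_{\theta',\theta}\Vert^4\le C{\theta'}^{-\frac{2Hd}{p}}(\theta-\theta')^{2(2-2H-\frac{Hd}p-\frac2q)}$$
uniformly in $n$. A slightly weaker exponent is acceptable, since only the Hölder-type integrability in Theorem \ref{sec2-thm.2} will be used later.

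For this factor we proceed as in the proof of Lemma \ref{sec4-lm.2}. First, Girsanov (Theorem \ref{sec3-thm.1}) and the moment bound \eqref{sec3-eq.7} on $\d\mathbb{P}/\d\tilde{\mathbb{P}}$ let us replace $X^n$ by $x+B^H$, at the cost of raising the power to $8$. Second, the shuffle expansion \eqref{shuffle} turns the $8$th power of the series into sums of iterated integrals over $\Delta^{8m}_{\theta',\theta}$ of products of first derivatives $\partial b_n(s_j,x+B^H_{s_j})$; the number of terms is at most $C^m$.

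The crucial change is how we treat these iterated integrals. Integrating all derivatives by parts against the joint density bound of Remark \ref{sec2-rm.1} gives the factor $s_1^{-H-Hd/p}$ at the first time. Since $s_1\ge\theta'$, this only yields $(\theta-\theta')^{m(1-H-Hd/p-1/q)}$, which has the wrong order: we need exponent $2-2H-2/q$ per pair of derivatives. We therefore group the derivatives in consecutive pairs $(s_{2k-1},s_{2k})$. For each pair we keep the Gaussian kernels $s^{-H(1+d)}e^{-|x|^2/4s^{2H}}$ and apply Lemma \ref{sec4-lm.1}, with $\theta$ replaced by the left endpoint and $(t-s_2)^\alpha$ absorbing the remaining time integrations. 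Lemma \ref{sec4-lm.1} is exactly tailored to transfer one factor $(\theta-\theta')^{-H-Hd/p}$ into ${\theta'}^{-Hd/p}$ per derivative. Iterating it over the pairs from the innermost outward, using Lemmas \ref{appendix-lm.2} and \ref{appendix-lm.3} for the Beta integrals, should give a bound of the form
$$C^m\,\Gamma(cm)^{-1/q'}\,{\theta'}^{-\frac{2Hd}{p}}(\theta-\theta')^{2-2H-\frac{Hd}{p}-\frac2q+(m-2)\kappa}.$$
Only one pair is needed to carry the improved exponent; the rest use the crude bound of \eqref{sec4-eq.10} and contribute the factor $(\theta-\theta')^{\kappa}$ each, together with the Gamma decay. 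Stirling's formula then makes the series in $m$ summable, as in Lemma \ref{sec4-lm.2}.

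The main obstacle is this application of Lemma \ref{sec4-lm.1} inside the multi-point expectation. Lemma \ref{sec4-lm.1} is stated for two points started at time $0$ with a trailing weight $(t-s_2)^\alpha$. In the multi-point setting we must use the conditional density bound of Remark \ref{sec2-rm.2}: we condition on $\mathcal{F}_{s_2}$, bound the later factors by Corollary \ref{sec2-col.1} to produce the weight $(\theta-s_2)^\alpha$ with $\alpha=(8m-2)\kappa$, and use the tower property. We also need to check that the $\theta$ in Lemma \ref{sec4-lm.1} plays the role of $\theta'$, and that the loss $\varepsilon$ there can be absorbed. If the exponent $2-2H-\frac{Hd}p-\frac2q$ does not come out exactly, I would first try splitting $\Delta_{\theta',\theta}$ according to whether $s_1\le 2\theta'$, as in the $\Delta_1/\Delta_2$ decomposition of Lemma \ref{sec4-lm.1}.
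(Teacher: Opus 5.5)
Your outline (factorization \eqref{sec4-eq.6}, Girsanov, shuffle, integration by parts, Lemma~\ref{sec4-lm.1} on one pair, the crude bound \eqref{sec4-eq.10} on the rest, Stirling) is the paper's. With your ordering, however, the exponents do not close.

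You apply Cauchy--Schwarz under $\mathbb{P}$ before Girsanov. You therefore need the eighth moment of $\mathcal{R}^n_{\theta',\theta}$ (with $X^n$ replaced by $x+B^H$), i.e.\ integrals over $\Delta^{8m}_{\theta',\theta}$. Lemma~\ref{sec4-lm.1} gains only on the first pair, whose kernel starts at time $0<\theta'\le s_1$. For any later pair the kernel starts at $s_{2k}$, which is also the lower limit of $s_{2k+1}$, so there is nothing to transfer the singularity to.

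So even the $m=1$ term gives at best ${\theta'}^{-2Hd/p}(\theta-\theta')^{6\kappa+2-2H-2/q}$. After the fourth root this is $C{\theta'}^{-\frac{Hd}{2p}}(\theta-\theta')^{2-2H-\frac{3Hd}{2p}-\frac2q}$, which exceeds the claim by the factor $(\theta'/(\theta-\theta'))^{Hd/(2p)}$, unbounded as $\theta\downarrow\theta'$. Your bookkeeping also carries a spurious $-\frac{Hd}p$: Lemma~\ref{sec4-lm.1} gives $2-2H-\frac2q$ for the pair.

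The fallback that a weaker exponent is harmless also fails. In Lemma~\ref{sec4-lm.4} the square root of this bound is integrated against $(r-\theta)^{H-3/2}$, which needs $\frac{3Hd}{4p}+\frac1q<\frac12$. This fails inside \eqref{cond1}--\eqref{cond2}, e.g.\ $H=0.4$, $q=2.5$, $d=1$, $p=2.1$. The stated exponent needs only $\frac{Hd}{2p}+\frac1q<\frac12$, which follows from $\kappa>0$ and $\frac1q\le H$.

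The paper instead applies Girsanov to the whole product and then H\"older with exponents $(4,2,4)$:
\[
\E\Vert \D^H_\theta X^n_t-\D^H_{\theta'}X^n_t\Vert^2\le C\big(\E\Vert\D^H_\theta X^n_t\Vert^8\big)^{1/4}\big(\E\Vert\mathcal{R}^n_{\theta',\theta}\Vert^4\big)^{1/2},
\]
with the second factor evaluated along $x+B^H$. The $m=1$ term then has four points. One pair handled by Lemma~\ref{sec4-lm.1}, with $(\theta,t,\alpha)$ replaced by $(\theta',\theta,2\kappa)$, plus two crude points gives ${\theta'}^{-2Hd/p}(\theta-\theta')^{2\kappa+2-2H-2/q}$. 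Its square root is the stated bound, up to the $\varepsilon$ of Lemma~\ref{sec4-lm.1}, which the paper drops.

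The mechanism you propose for reducing the multi-point integral to Lemma~\ref{sec4-lm.1} also fails. Suppose you condition on $\mathcal{F}_{s_2}$ and bound the later factors by Corollary~\ref{sec2-col.1}. You are left with $\E[\partial b_n(s_1,x+B^H_{s_1})\,\partial b_n(s_2,x+B^H_{s_2})\,G]$, where $G$ is a random, path-dependent weight. The two derivatives can then no longer be moved onto a two-point density. Pulling $G$ out in absolute value leaves $\nabla b_n$ rather than $\Vert b_n\Vert_{L^p_xL^q_t}$, so uniformity in $n$ is lost.

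The paper does not condition. It integrates all $4m$ derivatives by parts against the full joint density and bounds the result by the product of Gaussian kernels of Remark~\ref{sec2-rm.1}. It then integrates out $x_{4m},\dots,x_3$ by H\"older (uniformly in $x_2$), and the times over $\Delta^{4m-2}_{s_2,\theta}$ by Lemma~\ref{appendix-lm.3}. This leaves the deterministic weight $(\theta-s_2)^{(4m-2)\kappa}$ in front of exactly the two-point integral of Lemma~\ref{sec4-lm.1}.
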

		\begin{proof}
			By \eqref{sec4-eq.6}, Theorem \ref{sec3-thm.1} and the Cauchy-Schwarz inequality, we have
			\begin{align}\label{sec4-eq.12}
				&\E\Vert \boldsymbol{\mathrm{D}}^H_\theta X_t^n-\boldsymbol{\mathrm{D}}^H_{\theta'} X_t^n\Vert^2\nonumber\leq \left(\E^{\tilde{\mathbb{P}}}\left\Vert I_d+\sum_{m=1}^\infty\int_{\Delta_{\theta,t}^m}\prod_{j=1}^m\nabla b_j(s_j,x+\tilde{B}_{s_j}^H)\d s\right\Vert^8\right)^{\frac14}\\
				&\qquad\qquad\quad\times\left(\E^{\tilde{\mathbb{P}}}\left\Vert\sum_{m=1}^\infty\int_{\Delta_{\theta,\theta'}^m}\prod_{j=1}^m\nabla b_j(s_j,x+\tilde{B}_{s_j}^H)\d s\right\Vert^4\right)^{\frac12}\left[\E^{\tilde{\mathbb{P}}}\left(\frac{\d \mathbb{P}}{\d\tilde{\mathbb{P}}}\right)^4\right]^{\frac14}\nonumber\\
				&\qquad\qquad\quad\leq C\left(\E\Vert\D^H_\theta X_t^n\Vert^8\right)^{\frac14}\left(\E\left\Vert\sum_{m=1}^\infty\int_{\Delta_{\theta,\theta'}^m}\prod_{j=1}^m\nabla b_j(s_j,x+B_{s_j}^H)\d s\right\Vert^4\right)^{\frac12}.
			\end{align}
			Since $\E\Vert\D^H_\theta X_t^n\Vert^8$ is bounded as shown in Lemma \ref{sec4-lm.2}, it suffices to prove that
			$$\left(\E\left\Vert\sum_{m=1}^\infty\int_{\Delta_{\theta,\theta'}^m}\prod_{j=1}^m\nabla b_j(s_j,x+B_{s_j}^H)\d s\right\Vert^4\right)^{\frac12}\leq C\theta^{-\frac{Hd}{p}}(\theta-\theta')^{(2-2H-\frac{Hd}p-\frac2q)}.$$
			By \eqref{shuffle}, we have
			\begin{align*}
				&\E\left\Vert\sum_{m=1}^\infty\int_{\Delta_{\theta',\theta}^m}\prod_{j=1}^m\nabla b_j(s_j,x+B_{s_j}^H)\d s\right\Vert^4\\
				&\leq C\sum_{m=1}^\infty\sum_{i,j=1}^d\sum_{k_1,\cdots,k_{m-1}}^d\sum_{\ell,\ell'}\E\int_{\Delta_{\theta',\theta}^{4m}}\frac{\partial}{\partial x_{\ell_1'}}b_n^{(\ell_1)}(s_1,x+B_{s_1}^H)\cdots\frac{\partial}{\partial x_{\ell_{4m}'}}b_n^{(\ell_{4m})}(s_{4m},x+B_{s_{rm}}^H)\d s. 
			\end{align*}
			For each term in the sum, using the integration-by-part formula and Remark \ref{sec2-rm.1}, we obtain 
			\begin{align*}
				&\left|\E\int_{\Delta_{\theta',\theta}^{4m}}\frac{\partial}{\partial x_{\ell_1'}}b_n^{(\ell_1)}(s_1,x+B_{s_1}^H)\cdots\frac{\partial}{\partial x_{\ell_{4m}}}b_n^{(\ell_{4m}')}(s_{4m},x+B_{s_{rm}}^H)\d s\right|\\
				&\qquad\leq\int_{\Delta_{\theta',\theta}^{4m}}\left|\int_{(\R^d)^4m}\prod_{j=1}^{4m}\frac{\partial}{\partial x_{\ell_j'}}b_n^{(\ell_j)}(s_j,x+x_j)p_{s_1,\cdots,x_{4m}}(x_1,\cdots,s_{4m})\d x\right|\d s\\
				&\qquad \leq C\int_{\Delta_{\theta',\theta}^2}\int_{\R^{2d}}\prod_{j=1}^2|b_n^{(\ell_j)}(s_j,x+x_j)|\frac{1}{s_1^{H(1+d)}}e^{-\frac{|x_1|^2}{4s_1^{2H}}}\frac{1}{(s_2-s_1)^{H(1+d)}}e^{-\frac{|x_2-x_1|^2}{4(s_2-s_1)^{2H}}}\prod_{j=1}^{2}\d x_j\prod_{j=1}^{2}\d s_j\\
				&\qquad\qquad\qquad\times\int_{\Delta_{s_2,\theta}^{4m-2}}\int_{(\R^d)^{4m-2}}\prod_{j=3}^{4m}|b_n^{(\ell_j)}(s_j,x+x_j)|\frac1{(s_j-s_{j-1})^{H(1+d)}}e^{-\frac{|x_j-x_{j-1}|^2}{4(s_j-s_{j-1})^{2H}}}\prod_{j=3}^{4m}\d x_j\prod_{j=3}^{4m}d s_j
			\end{align*}
			Then, by virtue of the H\"older inequality and Lemma \ref{appendix-lm.3} (this proof procedure is similar to those of Corollary \ref{sec2-col.1} and \eqref{sec4-eq.10}), we have
			\begin{align*}
				&\int_{\Delta_{s_2,\theta}^{4m-2}}\int_{(\R^d)^{4m-2}}\prod_{j=3}^{4m}|b_n^{(\ell_j)}(s_j,x+x_j)|\frac1{(s_j-s_{j-1})^{H(1+d)}}e^{-\frac{|x_j-x_{j-1}|^2}{4(s_j-s_{j-1})^{2H}}}\prod_{j=3}^{4m}\d x_j\prod_{j=3}^{4m}d s_j\\
				&\qquad\qquad\leq C_{H,d,p,q}\frac{\Gamma\left(1-q'(H+\frac{Hd}{p})\right)^{\frac{4m-2}{q'}}\Vert b_n\Vert_{L^p_xL^q_t}^{4m-2}}{\Gamma\left((4m-2)(1-q'(H+\frac{Hd}p))+1\right)^{\frac1{q'}}}(t-s_2)^{(4m-2)(1-H-\frac{Hd}p-\frac1q)}.  
			\end{align*}
			As a consequence, Lemma \ref{sec4-eq.1} leads to
			\begin{align}
				&\left|\E\int_{\Delta_{\theta',\theta}^{4m}}\frac{\partial}{\partial x_{\ell_1'}}b_n^{(\ell_1)}(s_1,B_{s_1}^H)\cdots\frac{\partial}{\partial x_{\ell_{4m}}}b_n^{(\ell_{4m}')}(s_{4m},B_{s_{rm}}^H)\d s\right|\nonumber\\
				&\qquad\leq C_{H,d,p,q}\frac{\Gamma\left(1-q'(H+\frac{Hd}{p})\right)^{\frac{4m-2}{q'}}\Vert b_n\Vert_{L^p_xL^q_t}^{4m-2}}{\Gamma\left((4m-2)(1-q'(H+\frac{Hd}p))+1\right)^{\frac1{q'}}}\int_{\Delta_{\theta',\theta}^2}\int_{\mathbb{R}^{2d}}\tilde{b}_1(s_1,x_1)\tilde{b}_2(s_2,x_2)\nonumber\\
				&\qquad\qquad\times\frac{1}{s_1^{H(1+d)}}e^{-\frac{|x_1|^2}{4s_1^{2H}}}\frac{1}{(s_2-s_1)^{H(1+d)}}e^{-\frac{|x_2-x_1|^2}{4(s_2-s_1)^{2H}}}(t-s_2)^{(4m-2)(1-H-\frac{Hd}p-\frac1q)}\d x\d s\nonumber\\
				&\qquad\leq C_{H,d,p,q}\frac{\Gamma\left(1-q'(H+\frac{Hd}{p})\right)^{\frac{4m-2}{q'}}\Vert b_n\Vert_{L^p_xL^q_t}^{4m-2}}{\Gamma\left((4m-2)(1-q'(H+\frac{Hd}p))+1\right)^{\frac1{q'}}}{\theta'}^{-\frac{2Hd}p}(\theta-\theta')^{(4m-2)\kappa+2-2H-\frac2q},
			\end{align}
			where $\tilde{b}_i(s,y)=b_n^{(\ell_i)}(s,x+y),i=1,2$ and $\kappa=1-H-Hd/p-1/q>0$. Finally, applying Stirling's formula yields that
			\begin{align}\label{sec4-eq.14}
				&\E\left\Vert\sum_{m=1}^\infty\int_{\Delta_{\theta',\theta}^m}\prod_{j=1}^m\nabla b_j(s_j,x+B_{s_j}^H)\d s\right\Vert^4\nonumber\\
				&\qquad\leq C_{H,d,p,q}\Vert b_n\Vert_{L^p_xL^q_t}^2{\theta'}^{-\frac{2Hd}{p}}(\theta-\theta')^{2\kappa+2-2H-\frac2q}\nonumber\\
				&\qquad\qquad\times\sum_{m=1}^\infty d^{m+1}4^{4m}\frac{\Gamma\left(1-q'(H+\frac{Hd}{p})\right)^{\frac{4m-4}{q'}}\Vert b_n\Vert_{L^p_xL^q_t}^{4m-4}(\theta-\theta')^{(4m-4)\kappa}}{\Gamma\left((4m-2)(1-q'(H+\frac{Hd}p))+1\right)^{\frac1{q'}}}\nonumber\\
				&\qquad\leq C_{H,d,p,q}\Vert b_n\Vert_{L^p_xL^q_t}^2{\theta'}^{-\frac{2Hd}{p}}(\theta-\theta')^{2\kappa+2-2H-\frac2q}\nonumber\\
				&\qquad\qquad\times\sum_{m=1}^\infty d^{m+1}4^{4m}C^{4m-4}((4m-4)\kappa)^{-(4m-4)\kappa}\nonumber\\
				&\qquad\leq C_{H,d,p,q}\Vert b_n\Vert_{L^p_xL^q_t}^2{\theta'}^{-\frac{2Hd}{p}}(\theta-\theta')^{2\kappa+2-2H-\frac2q}.
			\end{align}
			Here, we adopt the convention $0^0=1$.  Substituting \eqref{sec4-eq.14} into \eqref{sec4-eq.12} leads to the desired result. The proof is complete.
		\end{proof}
		\subsection{Proof of Theorem \ref{sec1-thm.1}}
			We now turn to the Malliavin derivative $\D$ with respect to $W$ for the sequence of strong solutions $\{X_t^n\}_{n=1}^\infty$. Recall the relation between $\D$ and $\D^H$ is given by $\D=\mathcal{K}_H^*\D^H$, that is 
			\begin{align*}
				(\D_{\cdot} F)(s)&=K_H(T,s)(\D^H_\cdot F)(s)+\int_s^T\left((\D^H_\cdot F)(t)-(\D^H_\cdot F)(s)\right)\frac{\partial K_H}{\partial t}(t,s)\d t\\
				&=K_H(T,s)(\D^H_\cdot F)(s)+C_Hs^{\frac12-H}\int_s^T\left((\D^H_\cdot F)(t)-(\D^H_\cdot F)(s)\right)(t-s)^{H-\frac32}t^{H-\frac12}\d t
			\end{align*}
			for a random variable $F$. This connection between $\D$ and $\D^H$, combined with Lemma \ref{sec4-lm.2} and \ref{sec4-lm.3}, gives the following two critical lemmas.
		\begin{lemma}\label{sec4-lm.4}
			Assume \eqref{cond1} and \eqref{cond2}. Let $\{b_n\}_{n=1}^\infty\subset C_c^\infty([0,T]\times\R^d)$ be the approximating sequence of $b$ and $X_t^n$ be the corresponding strong solutions to \eqref{sec1-eq.1} with $b$ replaced by $b_n$. Then, 
			\begin{equation}\label{sec4-eq.15}
				\sup_n\int_0^T\E\Vert \boldsymbol{\mathrm{D}}_\theta X_t^n\Vert^2\d \theta\leq C(H,d,p,q,T,\Vert b\Vert_{L^p_xL^q_t}).
			\end{equation}
		\end{lemma}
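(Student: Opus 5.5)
We use the transfer formula $\D=\mathcal{K}_H^*\D^H$ written out just before the statement. For fixed $t$ and $\theta<t$ (note $\D^H_\theta X^n_t=0$ for $\theta\ge t$) we have
\begin{equation*}
\D_\theta X_t^n=K_H(T,\theta)\,\D^H_\theta X_t^n+C_H\theta^{\frac12-H}\int_\theta^T\left(\D^H_{u}X_t^n-\D^H_\theta X_t^n\right)(u-\theta)^{H-\frac32}u^{H-\frac12}\d u .
\end{equation*}
Since $\D^H_uX^n_t=0$ for $u>t$, the inner integral is split into the piece over $(\theta,t)$ and the piece over $(t,T)$. On $(t,T)$ the integrand is $-\D^H_\theta X^n_t\,(u-\theta)^{H-3/2}u^{H-1/2}$. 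Hence
\begin{equation*}
\E\Vert\D_\theta X_t^n\Vert^2\le C\,I_1(\theta)+C\,I_2(\theta)+C\,I_3(\theta),
\end{equation*}
where $I_1$ is the $K_H(T,\theta)$ term, $I_2$ is the $(t,T)$ tail, and $I_3$ is the increment integral over $(\theta,t)$. Each is estimated using Lemma \ref{sec4-lm.2} and Lemma \ref{sec4-lm.3}, uniformly in $n$, and then integrated in $\theta$ over $(0,t)$.

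\textbf{Terms $I_1$ and $I_2$.} For $I_1$, the standard bound $K_H(T,\theta)\le C\theta^{H-1/2}(T-\theta)^{H-1/2}$ for $H<1/2$ applies. Combined with Lemma \ref{sec4-lm.2} (with $r=2$), it gives $I_1(\theta)\le C\theta^{2H-1}(T-\theta)^{2H-1}$, which is integrable since $2H-1>-1$. For $I_2$, the factor $u^{H-1/2}$ is bounded by $\theta^{H-1/2}$, and
\begin{equation*}
\int_t^T(u-\theta)^{H-\frac32}\d u\le C(t-\theta)^{H-\frac12}.
\end{equation*}
With Lemma \ref{sec4-lm.2} this yields $I_2(\theta)\le C(t-\theta)^{2H-1}$, again integrable on $(0,t)$.

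\textbf{Term $I_3$.} First apply Minkowski's inequality in $L^2(\Omega)$:
\begin{equation*}
I_3(\theta)\le C\theta^{1-2H}\left(\int_\theta^t\left(\E\Vert\D^H_uX^n_t-\D^H_\theta X^n_t\right\Vert^2)^{\frac12}(u-\theta)^{H-\frac32}u^{H-\frac12}\d u\right)^2 .
\end{equation*}
Next invoke Lemma \ref{sec4-lm.3} with $\theta'=\theta$ and $\theta$ replaced by $u$. This gives
\begin{equation*}
\left(\E\Vert\cdot\Vert^2\right)^{\frac12}\le C\theta^{-\frac{Hd}{2p}}(u-\theta)^{1-H-\frac{Hd}{2p}-\frac1q}.
\end{equation*}
The $u$-integrand is then at most $(u-\theta)^{-\frac12-\frac{Hd}{2p}-\frac1q}\,u^{H-1/2}$. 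The exponent exceeds $-1$ exactly when $\frac{Hd}{2p}+\frac1q<\frac12$. This holds because (H1) gives $\frac1q+\frac{Hd}{p}<1-H$, and (H2) gives $\frac1q\le H$, so $\frac1q+\frac{Hd}{2p}<\frac{1-H+H}{2}=\frac12$. Using $u^{H-1/2}\le\theta^{H-1/2}$, we get
\begin{equation*}
I_3(\theta)\le C\,\theta^{1-2H}\,\theta^{2H-1}\,\theta^{-\frac{Hd}{p}}=C\theta^{-\frac{Hd}{p}} .
\end{equation*}
This is integrable on $(0,T)$ since $Hd/p<1$. Summing the three bounds and integrating over $\theta\in(0,T)$ gives \eqref{sec4-eq.15}, with a constant depending only on $H,d,p,q,T$ and $\Vert b\Vert_{L^p_xL^q_t}$.

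\textbf{Main obstacle.} The delicate point is $I_3$, specifically the singularity of $(u-\theta)^{H-3/2}$ at $u=\theta$. It is controlled only by the Hölder-type increment bound of Lemma \ref{sec4-lm.3}, and this is exactly where the conditions $Hq\ge1$ and $p\ge2$ in (H2) enter. The prefactor $\theta^{-Hd/p}$ coming from Lemma \ref{sec4-lm.3} is the singularity that was deliberately shifted to $\theta\downarrow0$ in Lemma \ref{sec4-lm.1}, and it is harmless after integration. Some care is also needed because Lemma \ref{sec4-lm.3} is stated with $\theta'<\theta<t$. Finally, the bound of Lemma \ref{sec4-lm.2} must be uniform in $\theta$; this follows from \eqref{sec4-eq.10}, since the factor $(t-\theta)$ appears there only with a positive power.
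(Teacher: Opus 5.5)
Your proposal is correct and follows essentially the same route as the paper. You use the transfer formula $\D=\mathcal{K}_H^*\D^H$, Lemma \ref{sec4-lm.2} together with the bound $K_H(T,\theta)\lesssim\theta^{H-1/2}(T-\theta)^{H-1/2}$ for the first term, and Lemma \ref{sec4-lm.3} inside the increment integral, with the same integrability condition $\frac{Hd}{2p}+\frac1q<\frac12$ derived from \eqref{cond1} and $Hq\ge1$; your Minkowski step is equivalent to the paper's Cauchy--Schwarz. The only difference is that you treat the tail over $(t,T)$, where $\D^H_uX_t^n=0$, explicitly; this is a welcome bit of extra care, since the paper silently truncates that integral at $t$.
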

		\begin{proof}
			By a straightforward calculation, we have
			\begin{align*}
				\E\Vert\D_\theta X_t^n\Vert^2&\leq K_H^2(T,\theta)\E\Vert \D_\theta^H X_t\Vert^2\\
				&+\theta^{1-2H}\int_\theta^t\int_\theta^t\E\left[\Vert \D^H_{r_1} X_t-\D_\theta^H X_t\Vert\Vert \D^H_{r_2} X_t-\D_\theta^H X_t\Vert\right]\\
				&\qquad\times(r_1-\theta)^{H-\frac12}r_1^{H-\frac12}(r_2-\theta)^{H-\frac32}r_2^{H-\frac12}\d r_1\d r_2\\
				&\leq C\theta^{2H-1}(T-\theta)^{2H-1}\\
				&+\theta^{1-2H}\int_\theta^t\int_\theta^t\E\left[\Vert \D^H_{r_1} X_t^n-\D_\theta^H X_t^n\Vert^2\right]^{\frac12}\E\left[\Vert \D^H_{r_2} X_t^n-\D_\theta^H X_t^n\Vert^2\right]^{\frac12}\\
				&\qquad\times(r_1-\theta)^{H-\frac12}r_1^{H-\frac12}(r_2-\theta)^{H-\frac32}r_2^{H-\frac12}\d r_1\d r_2,
			\end{align*}
			where we use Lemma \ref{sec4-lm.2}, Lemma \ref{appendix-lm.4} and the Cauchy-schwarz inequality in the second inequality. Then, Lemma \ref{sec4-lm.3} gives $\sup_n\E\Vert \D^H_{r_i} X_t^n-\D^H_\theta X_t^n\Vert^2\leq C\theta^{-Hd/p}(r_i-\theta)^{2-2H-Hd/p-2/q}$ for $i=1,2$. As a result,
			\begin{align}\label{sec4-eq.16}
				&\theta^{1-2H}\int_\theta^t\int_\theta^t\E\left[\Vert \D^H_{r_1} X_t^n-\D_\theta^H X_t^n\Vert^2\right]^{\frac12}\E\left[\Vert \D^H_{r_2} X_t^n-\D_\theta^H X_t^n\Vert^2\right]^{\frac12}\nonumber\\
				&\qquad\times(r_1-\theta)^{H-\frac32}r_1^{H-\frac12}(r_2-\theta)^{H-\frac32}r_2^{H-\frac12}\d r_1\d r_2\nonumber\\
				&\leq C \theta^{-\frac{Hd}p}\left(\int_\theta^t(r-\theta)^{\frac12(1-H-\frac{Hd}p-\frac1q)+\frac12(H-\frac1q)-1}\d r\right)^2\leq C \theta^{-\frac{Hd}p} (t-\theta)^{1-\frac{Hd}p-\frac2q}
			\end{align}
			where we use the condition $Hq\geq1$ and $1-H-Hd/p-1/q>0$ in the last inequality. Finally, given that $2H-1>-1,-Hd/p>-1$ and $1-Hd/q-2/q>0$, we obtain \eqref{sec4-eq.15}. The proof is complete.
		\end{proof}
		\begin{lemma}\label{sec4-lm.5}
			Assume \eqref{cond1} and \eqref{cond2}. Let $\{b_n\}_{n=1}^\infty$ and $X_t^n$ be the same quantities as in Lemma \ref{sec4-lm.4}. Then, there exists a constant $\beta>0$ such that
			\begin{equation}
				\sup_n\int_0^T\int_0^T\frac{\E\Vert\boldsymbol{\mathrm{D}}_\theta X_t^n-\boldsymbol{\mathrm{D}}_{\theta'}X_t^n\Vert^2}{|\theta-\theta'|^{1+2\beta}}\d \theta\d \theta'\leq C(H,d,p,q,T,\Vert b\Vert_{L^p_xL^q_t}).
			\end{equation}
		\end{lemma}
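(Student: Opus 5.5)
We will verify the third condition of Theorem \ref{sec2-thm.2} directly from the representation $\D=\mathcal{K}_H^*\D^H$, using the same two ingredients as in Lemma \ref{sec4-lm.4}: the uniform moment bound of Lemma \ref{sec4-lm.2} and the increment bound of Lemma \ref{sec4-lm.3}. By symmetry it suffices to integrate over $0<\theta'<\theta<t$; for $\theta\geq t$ the derivative vanishes, and the region straddling $t$ is handled by the same estimates. Write $F=X_t^n$ and $\phi(r)=\D^H_rF$. Then
\begin{align*}
\D_\theta F-\D_{\theta'}F&=\big[K_H(T,\theta)-K_H(T,\theta')\big]\phi(\theta)+K_H(T,\theta')\big[\phi(\theta)-\phi(\theta')\big]\\
&\quad+\int_\theta^T(\phi(r)-\phi(\theta))\Big[\partial_rK_H(r,\theta)-\partial_rK_H(r,\theta')\Big]\d r\\
&\quad+\int_{\theta'}^{\theta}(\phi(r)-\phi(\theta'))\,\partial_rK_H(r,\theta')\d r+\big(\phi(\theta')-\phi(\theta)\big)\int_\theta^T\partial_rK_H(r,\theta')\d r.
\end{align*}
This splits the increment into five terms $I_1,\dots,I_5$, which we estimate separately in $L^2(\Omega)$.

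For $I_1$, Lemma \ref{sec4-lm.2} and the H\"older regularity of $K_H(T,\cdot)$ away from the endpoints (Lemma \ref{appendix-lm.4}) give a bound $C|\theta-\theta'|^{\gamma}\theta'^{H-\frac12-\gamma}$ for small $\gamma>0$. For $I_2$ and $I_5$, Lemma \ref{sec4-lm.3} gives $\E\Vert\phi(\theta)-\phi(\theta')\Vert^2\leq C\theta'^{-Hd/p}(\theta-\theta')^{2-2H-Hd/p-2/q}$. The exponent $2-2H-Hd/p-2/q$ exceeds $1-H+\kappa-1/q\geq 1-2H+\kappa>0$ because $Hq\geq1$; it is strictly positive by \eqref{cond1} and \eqref{cond2}. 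It is multiplied by $K_H(T,\theta')^2\lesssim\theta'^{2H-1}$, respectively by $\big|\int_\theta^T\partial_rK_H(r,\theta')\d r\big|^2\lesssim\theta'^{1-2H}(\theta-\theta')^{2H-1}$. The latter loses $(\theta-\theta')^{2H-1}$, leaving the exponent $1-Hd/p-2/q>0$. For $I_4$, we use Minkowski's inequality and Lemma \ref{sec4-lm.3} with base point $\theta'$, exactly as in \eqref{sec4-eq.16}, obtaining $C\theta'^{-Hd/p}(\theta-\theta')^{1-Hd/p-2/q}$. In each case the increment is of order $\theta'^{-a}(\theta-\theta')^{\eta}$ with $a<1$ and $\eta>0$. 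Choosing $2\beta<\eta$, the double integral $\int\!\int\theta'^{-a}(\theta-\theta')^{\eta-1-2\beta}$ is finite.

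\textbf{Main obstacle: the term $I_3$.} Here we need a cancellation estimate on the kernel difference. Using $\partial_rK_H(r,s)=C_H s^{\frac12-H}(r-s)^{H-\frac32}r^{H-\frac12}$, we bound $|\partial_rK_H(r,\theta)-\partial_rK_H(r,\theta')|$ by $C(\theta-\theta')^{\gamma}\theta'^{\frac12-H-\gamma}(r-\theta)^{H-\frac32}r^{H-\frac12}$ for small $\gamma\in(0,1)$, after interpolating between the trivial bound and the mean value bound. We then apply Cauchy--Schwarz in $r$ as in \eqref{sec4-eq.16}, with $\E\Vert\phi(r)-\phi(\theta)\Vert^2\leq C\theta^{-Hd/p}(r-\theta)^{2-2H-Hd/p-2/q}$. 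The resulting $r$-integral has exponent $\tfrac12(2-2H-Hd/p-2/q)+H-\tfrac32>-1$, which is exactly the condition already verified in Lemma \ref{sec4-lm.4} via $Hq\geq1$; so $I_3$ is bounded by $C(\theta-\theta')^{2\gamma}\theta'^{1-2H-2\gamma-Hd/p}$. The delicate point is keeping the powers of $\theta'$ integrable while extracting a positive power of $\theta-\theta'$. If the crude interpolation fails near $r\downarrow\theta$, we would split the $r$-integral at $r=\theta+(\theta-\theta')$. On the near piece we bound the two kernels separately, and the near piece contributes a power $(\theta-\theta')^{\eta/2}$ from Lemma \ref{sec4-lm.3}. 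On the far piece we use the mean value bound with $(r-\theta)^{H-\frac52}(\theta-\theta')$. Taking $\beta<\gamma$ sufficiently small then yields the uniform bound, since all constants depend on $b_n$ only through $\sup_n\Vert b_n\Vert_{L^p_xL^q_t}\leq\Vert b\Vert_{L^p_xL^q_t}$.
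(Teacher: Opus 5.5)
Your decomposition is the paper's up to bookkeeping: your $I_3$ is the sum of its $\Lambda_3$ and $\Lambda_6$, and $I_4$, $I_5$ are $\Lambda_4$, $\Lambda_5$. The terms $I_1,I_3,I_4,I_5$ can be closed as you indicate. \textbf{The gap is in $I_2$}, and it comes from how you split the boundary term: you attach $K_H(T,\theta')$ to the increment $\phi(\theta)-\phi(\theta')$. The kernel singularity $K_H(T,s)\sim c\,s^{H-\frac12}$ as $s\downarrow0$ is genuine for $H<\frac12$, and the bound of Lemma~\ref{sec4-lm.3} carries its singular factor ${\theta'}^{-Hd/p}$ at the \emph{same} point. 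So you only get
\[
\E\Vert I_2\Vert^2\lesssim {\theta'}^{2H-1-\frac{Hd}{p}}(\theta-\theta')^{2-2H-\frac{Hd}p-\frac2q}.
\]
After integrating out $\theta$ you are left with $\int_0^T{\theta'}^{2H-1-Hd/p}\,\d\theta'$. This diverges whenever $Hd/p\geq 2H$, i.e.\ $d\geq 2p$, which \eqref{cond1}--\eqref{cond2} allow. For example, $H=\frac1{10}$, $q=\infty$, $p=2$, $d=10$ gives $\frac{Hd}{p}=\frac12<\frac9{10}=1-H$. Hence your claim that every term is $O({\theta'}^{-a}(\theta-\theta')^{\eta})$ with $a<1$ fails for $I_2$, and the argument does not close on part of the parameter range.

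The repair is to split the other way, as the paper does:
\[
K_H(T,\theta)\phi(\theta)-K_H(T,\theta')\phi(\theta')=\big[K_H(T,\theta)-K_H(T,\theta')\big]\phi(\theta')+K_H(T,\theta)\big[\phi(\theta)-\phi(\theta')\big].
\]
The first piece is handled like your $I_1$, using Lemma~\ref{sec4-lm.2} and Lemma~\ref{appendix-lm.4}. In the second piece the kernel singularity $\theta^{2H-1}$ now sits at the larger point. Integrating $\theta'$ over $(0,\theta)$ first gives $\theta^{2H-1}\int_0^\theta{\theta'}^{-Hd/p}(\theta-\theta')^{1-2H-\frac{Hd}p-\frac2q-2\beta}\,\d\theta'=C\,\theta^{1-\frac{2Hd}p-\frac2q-2\beta}$, which is integrable by \eqref{cond1} for small $\beta$. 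Interpolating Lemma~\ref{sec4-lm.3} with the uniform bound of Lemma~\ref{sec4-lm.2} would also work.

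Two smaller corrections:
\begin{enumerate}
\item In $I_5$ you dropped the factor $r^{H-\frac12}\le\theta^{H-\frac12}$. The correct bound is $\big|\int_\theta^T\partial_rK_H(r,\theta')\,\d r\big|^2\lesssim(\theta'/\theta)^{1-2H}(\theta-\theta')^{2H-1}\le(\theta-\theta')^{2H-1}$. This still yields your exponent $1-\frac{Hd}p-\frac2q$, now with the factor ${\theta'}^{-Hd/p}$.
\item In $I_3$ the ``crude'' bound that moves the extra singularity onto ${\theta'}^{-\gamma}$ is false when $r-\theta\ll\theta-\theta'\ll\theta'$. Your split at $r=\theta+(\theta-\theta')$ is therefore necessary, not optional, for the $(r-s)^{H-\frac32}$ part; this is equivalent to the paper's substitution $u=(r-\theta)/(\theta-\theta')$ in $\Lambda_6$. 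The $s^{\frac12-H}$ part is handled as in $\Lambda_3$, via $|\theta^{\frac12-H}-{\theta'}^{\frac12-H}|\le(\theta-\theta')^{\gamma}\theta^{\frac12-H-\gamma}$ and $r^{H-\frac12}\le\theta^{H-\frac12}$.
\end{enumerate}
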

		\begin{proof}
			We decompose $\D_\theta X_t^n-\D_{\theta'} X_t^n$ with $\theta'<\theta$ into six terms
			\begin{align*}
				\D_\theta X_t^n-\D_{\theta'} X_t^n&=\left(K_H(T,\theta)-K_H(T,\theta')\right)\D_{\theta'}^H X_t^n+K_H(T,\theta)\left(\D_\theta^H X_t^n-\D_{\theta'}^H X_t^n\right)\\
				&+ \left(\theta^{\frac12-H}-{\theta'}^{\frac12-H}\right)\int_\theta^T\left(\D_r^H X_t^n-\D_{\theta}^HX_t^n\right)(r-\theta)^{H-\frac32}r^{H-\frac12}\d r\\
				&+{\theta'}^{\frac12-H}\int_{\theta'}^\theta\left(\D_r^H X_t^n-\D_{\theta'}^HX_t^n\right)(r-\theta')^{H-\frac32}r^{H-\frac12}\d r\\
				&+{\theta'}^{\frac12-H}\left(\D_\theta^H X_t^n-\D_{\theta'}^HX_t^n\right)\int_\theta^T(r-\theta')^{H-\frac32}r^{H-\frac12}\d r\\
				&+{\theta'}^{\frac12-H}\int_{\theta}^T\left(\D_r^HX_t^n-\D_{\theta}^HX_t^n\right)\left[(r-\theta')^{H-\frac32}-(r-\theta)^{H-\frac32}\right]r^{H-\frac12}\d r\\
				&=:\sum_{i=1}^6\Lambda_i
			\end{align*} 
			
			For the first term, by Lemma \ref{sec4-lm.2} and \ref{appendix-lm.4}, we have
			\begin{align*}
				&\sup_n\int_0^T\int_0^\theta\frac{|K_H(T,\theta)-K_H(T,\theta')|^2}{|\theta-\theta'|^{1+2\beta}}\E\Vert\D_{\theta'}^HX_t^n\Vert^2\d \theta\d \theta'\nonumber\\
				&\qquad\qquad\qquad\qquad\leq C\int_0^T\int_0^T\frac{|K_H(T,\theta)-K_H(T,\theta')|^2}{|\theta-\theta'|^{1+2\beta}}\d \theta\d \theta'<\infty
			\end{align*}
			for some $\beta>0$. Similarly, an application of Lemma \ref{sec4-lm.3} and \ref{appendix-lm.4} yields that
			$$\sup_n|K_H(T,\theta)|^2\E\Vert \D_\theta^H X_t^n-\D_{\theta'}^H X_t^n\Vert^2\leq C\theta^{2H-1}(T-\theta)^{2H-1}{\theta'}^{-\frac{Hd}p}(\theta-\theta')^{2-2H-\frac{Hd}p-\frac2q},$$
			which implies that for $0<\beta<1-H-Hd/p-1/q$,
			\begin{align*}
				&\sup_n\int_0^T\int_0^\theta\frac{|K_H(T,\theta)|^2\E\Vert\D_\theta^HX_t^n-\D_{\theta'}^HX_t^n\Vert^2}{|\theta-\theta'|^{1+2\beta}}\d \theta'\d \theta\nonumber\\
				&\qquad\qquad\leq C\int_0^T\theta^{2H-1}(T-\theta)^{2H-1}\d \theta\int_0^\theta{\theta'}^{-\frac{Hd}p}(\theta-\theta')^{1-2H-\frac{Hd}p-\frac2q-2\beta}\d \theta'\nonumber\\
				&\qquad\qquad= C\int_0^T\theta^{2H-1+(2-2H-\frac{Hd}p-\frac2q-2\beta)}(T-\theta)^{2H-1}\d \theta<\infty.
			\end{align*}
			To bound the third term, the inequality $(\theta^{1/2-H}-{\theta'}^{1/2-H})^2\leq\theta^{1-2H-\gamma}(\theta-\theta')^{\gamma}$ for $0<\gamma<1$ can be derived from Lemma \ref{appendix-lm.1}. In addition, following an analogous argument as in the proof of \eqref{sec4-eq.16}, we can prove that
			\begin{align*}
				\sup_n\E\left\Vert\int_\theta^t\left(\D_r^H X_t^n-\D_{\theta}^HX_t^n\right)(r-\theta)^{H-\frac32}r^{H-\frac12}\d r\right\Vert^2 \leq C\theta^{2H-1-\frac{Hd}p}(t-\theta)^{1-\frac{Hd}p-\frac2q}.
			\end{align*}
			As a result, it holds for some small $\gamma$ and $\beta$ that
			\begin{align*}
				&\sup_n\int_0^T\int_0^\theta\E\Vert\Lambda_3\Vert^2|\theta-\theta'|^{-1-2\beta}\d \theta'\d \theta\nonumber\\
				&\qquad\qquad\leq\int_0^T\theta^{-\gamma-\frac{Hd}p}(t-\theta)^{1-\frac{Hd}p-\frac2q}\d \theta\int_0^\theta(\theta-\theta')^{-1+\gamma-2\beta}\d \theta'<\infty.
			\end{align*}
			 For $\Lambda_4$, with the same method to that in the proof of \eqref{sec4-eq.16}, we obtain
			\begin{align*}
				\sup_n\E\left\Vert{\theta'}^{\frac12-H}\int_{\theta'}^\theta\left(\D_r^H X_t^n-\D_{\theta'}^HX_t^n\right)(r-\theta')^{H-\frac32}r^{H-\frac12}\d r\right\Vert^2 \leq C\theta^{-\frac{Hd}p}(\theta-\theta')^{1-\frac{Hd}p-\frac2q}.
			\end{align*}
			Thus, for $2\beta<1-Hd/p-2/q$ (the existence of $\beta$ can be verified by the conditions $1-H-Hd/p-1/q>0$ and $1/q\leq H$), we have
			\begin{align*}
				\sup_n\int_0^T\int_0^\theta\E\Vert\Lambda_4\Vert^2|\theta-\theta'|^{-1-2\beta}\d \theta'\d \theta\leq C\int_0^T\int_0^\theta\theta^{-\frac{Hd}p}(\theta-\theta')^{-\frac{Hd}p-\frac2q-2\beta}\d \theta'\d\theta<\infty.
			\end{align*}
			As for the fifth term, by Lemma \ref{sec4-lm.3}, we have
			\begin{align*}
				\E\Vert\Lambda_5\Vert^2&\leq C{\theta'}^{1-2H-\frac{Hd}p}(\theta-\theta')^{2-2H-\frac{Hd}p-\frac2q}\left(\int_\theta^T(r-\theta')^{H-\frac32}r^{H-\frac12}\d r\right)^2\\
				&\leq C{\theta'}^{-\frac{Hd}p}(\theta-\theta')^{2-2H-\frac{Hd}p-\frac2q}\left[(\theta-\theta')^{H-\frac12}-(T-\theta')^{H-\frac12}\right]^2\\
				&\leq C{\theta'}^{-\frac{Hd}p}(\theta-\theta')^{1-\frac{Hd}p-\frac2q}.
			\end{align*}
			Therefore, $\E\Vert\Lambda_5^2\Vert^2|\theta-\theta'|^{-1-2\beta}$ is integrable on the domain $0<\theta'<\theta<T$ for $2\beta<1-Hd/p-2/q$ thanks to the condition \eqref{cond1} and \eqref{cond2}. 
			
			Finally, it remains to deal with the $\Lambda_6$ term. By the Cauchy-Schwarz inequality and Lemma \ref{sec4-lm.3}, we obtain
			\begin{align*}
				\E\Vert\Lambda_6\Vert^2&\leq C{\theta'}^{\frac12-H}\Bigg[\int_\theta^T\theta^{-\frac{Hd}{2p}}(r-\theta)^{\frac12(1-H-\frac{Hd}p-\frac1q)+\frac12(1-\frac1q-H)}\\
				&\qquad\qquad\qquad\times\left[(r-\theta)^{H-\frac32}-(r-\theta')^{H-\frac32}\right]r^{H-\frac12}\d r\Bigg]^2.
			\end{align*}
			Using the fact that $\theta'<\theta$ and $H<1/2$, and changing the variable $u=(r-\theta)/h$ with $h=\theta-\theta'$ leads to
			\begin{align*}
				\E\Vert\Lambda_6\Vert^2&\leq C{\theta'}^{-\frac{Hd}p}\Bigg[\int_\theta^T(r-\theta)^{\frac12(1-H-\frac{Hd}p-\frac1q)+\frac12(1-\frac1q-H)}\\
				&\qquad\qquad\qquad\times\left[(r-\theta)^{H-\frac32}-(r-\theta')^{H-\frac32}\right]\d r\Bigg]^2\\
				&\leq C {\theta'}^{-\frac{Hd}p}(\theta-\theta')^{1-\frac{Hd}p-\frac2q}\Bigg[\int_0^{\frac{T-\theta}h}u^{\frac12(1-H-\frac{Hd}p-\frac1q)+\frac12(1-\frac1q-H)}\\
				&\qquad\qquad\qquad\qquad\qquad\times\left[u^{H-\frac32}-(u+1)^{H-\frac32}\right]\d u\Bigg]^2\\
				&\leq C{\theta'}^{-\frac{Hd}p}(\theta-\theta')^{1-\frac{Hd}p-\frac2q}
			\end{align*}
			where we use the fact that $1-H-Hd/p-1/q>0$ and $1/q\leq H$ for the last inequality. This estimate gives
			\begin{equation*}
				\sup_n\int_0^T\int_0^\theta\E\Vert\Lambda_6\Vert^2|\theta-\theta'|^{-1-2\beta}\d \theta'\d \theta<\infty
			\end{equation*}
			for all $\beta<2-Hd/p-2/q$. 
			
			For a small $\beta$, we have shown $\sup_n\int_0^T\int_0^\theta\E\Vert\Lambda_i\Vert^2|\theta-\theta'|^{-1-2\beta}\d \theta'\d\theta<\infty$ for $i=1,\cdots,6$, following from which $\sup_n\int_0^T\int_0^\theta\E\Vert\D_\theta X_t^n-\D_{\theta'} X_t^n\Vert^2|\theta-\theta'|^{-1-2\beta}\d \theta'\d\theta<\infty$. With the same approach, it's not hard to check $\sup_n\int_0^T\int_\theta^T\E\Vert\D_{\theta'} X_t^n-\D_\theta X_t^n\Vert^2|\theta-\theta'|^{-1-2\beta}\d \theta'\d\theta<\infty$. This completes the proof.
		\end{proof}
		\begin{remark}\label{sec4-rm.2}
			Now we explain why we need the connection between $\boldsymbol{\mathrm{D}}$ and $\boldsymbol{\mathrm{D}}^H$. If we perform a direct calculation of $\boldsymbol{\mathrm{D}}_\theta X_t^n-\boldsymbol{\mathrm{D}}_{\theta'} X_t^n$ as is done in \cite{banos2020strong}, we must deal with the convergence of the integral
			$$\E\int_{\Delta_{\theta,t}^{4m}}\prod_{j=1}^{4m}\frac{\partial}{\partial x_{\ell_j'}}b_n^{(\ell_j)}(s_j,x+B_{s_j}^H)\prod_{j=1}^4\left(K_H(s_j,\theta)-K_H(s_j,\theta')\right)\d s.$$ 
			Then, by Lemma \ref{appendix-lm.4}, we know that the kernel function will enhance the singularity at the point $\theta$, which may lead to more restrictions on the drift. Therefore, exploiting the relation between $\boldsymbol{\mathrm{D}}$ and $\boldsymbol{\mathrm{D}}^H$ actually enables the singularity transformation, thus significantly improving upon the previous results. 
		\end{remark}
		\textit{Proof of Theorem} \textit{\ref{sec1-thm.1}}: Recall that $\{b_n\}\subset C_c^\infty([0,T]\times\R^d)$ is an approximating sequence of $b$, and $X_t^n$ denotes the corresponding strong solutions to \eqref{sec1-eq.1} with $b$ replaced by $b_n$. By Corollary \ref{sec3-col.1}, Lemma \ref{sec4-lm.4} and Lemma \ref{sec4-lm.5}, all hypotheses of Theorem \ref{sec2-thm.2} are satisfied. Therefore, we have that there exists a subsequence of $X_t^n$ (still denoted by $X_t^n$) such that
		\begin{equation}
			X_t^n\xrightarrow{L^2(\Omega) \text{ and a.s. }} X_t,\;\;\text{as $n\rightarrow\infty$},\qquad\forall\;t\in[0,T].
		\end{equation}
		Note that 
		$$\E|X_t-X_s|^m\leq 2^{m-1}\E\left|\int_s^tb(r,X_r)\d r\right|^m+2^{m-1}\E|B_t^H-B_2^H|^m.$$
		Using \cite[Lemma 3.11]{butkovsky2023weak}, we have that under the condition \eqref{cond1}, the first term of the right-hand side of the above inequality is bounded by $\Vert b\Vert_{L^p_xL^q_t}|t-s|^{m(1-H-Hd/p-(1-H)/q)}$ for all $m\geq 2$. As a result,
		$$\E|X_t-X_s|^m\leq C\Vert b\Vert_{L^p_xL^q_t}|\left(|t-s|^{m(1-H-\frac{Hd}p-\frac{1-H}{q})}+|t-s|^{mH}\right),\qquad\forall\;m\geq2$$
		Therefore, $X_t$ can be extended to a continuous random field on $[0,T]$ due to the Kolmogorov-Chentsov theorem, and up to a subsequence (still denoted by $X_t^n$) 
		\begin{equation}\label{sec4-eq.18}
			X_t^n(\omega)\xrightarrow{n\rightarrow\infty} X_t(\omega),\qquad\forall\;t\in[0,T]\cap\mathbb{Q}\text{ \textit{and} } \mathbb{P}-a.s.\omega\in \Omega.
		\end{equation}
		For each $N\in\mathbb{N}_+$, we have
		\begin{align*}
			&\E\sup_{t\in[0,T]}\left|\int_s^tb(r,X_r)\d r-\int_0^tb_n(r,X_r^n)\d r\right|\\
			&\qquad\leq \E\int_0^T|b-b_N|(r,X_r)\d r+\E\int_0^T|b_N-b_n|(r,X_r^n)\d r\\
			&\qquad\qquad+\E\sup_{t\in[0,T]}\left|\int_0^tb_N(r,X_r)\d r-\int_0^tb_N(r,X_r^n)\d r\right|.
		\end{align*}
		By \cite[Lemma 3.11]{butkovsky2023weak}, the first and second term on the right-hand side of the above inequality are bounded by $\Vert b-b_N\Vert_{L^p_xL^q_t}$ and $\Vert b_N-b_n\Vert_{L^p_xL^q_t}$, respectively. In addition, by \eqref{sec4-eq.18} and the dominated convergence theorem, the third term tends to $0$ as $n\rightarrow\infty$. Consequently, upon taking the limit as $n\rightarrow\infty$ first, followed by letting $N\rightarrow\infty$, we obtain that
		$$\E\sup_{t\in[0,T]}\left|\int_0^tb(r,X_r)\d r-\int_0^tb_n(r,X_r^n)\d r\right|\rightarrow0,$$
		which implies that
		$$X_t-x-\int_0^tb(r,X_r)\d r=\lim_{n\rightarrow\infty}\left(X_t^n-x-\int_0^tb(r,X_r^n)\d r\right)=B_t^H,$$
		i.e. the limit $X_{\cdot}$ is a strong solution to \eqref{sec1-thm.1}. Hence, we obtain the strong existence of solutions to \eqref{sec1-eq.1}. To show pathwise uniqueness.  it suffices to show that two given strong solutions are weakly unique. Indeed, to verify it, one can follow the same argument as in \cite[Chapter IX, Exercies (3.21)]{RevuzYor1999}, which asserts that strong existence and uniqueness in law imply pathwise uniqueness. The argument does not rely on semimartingale property. The proof is complete.
		\section{Stochastic Flow and Regularity Properties}\label{sec5}
			Recall that $X_{s,t}^x$ is the unique strong solution to the SDE \eqref{sec1-eq.1}. For simplicity, we use the notation $X_{t}^x:=X_{0,t}^x$ corresponding to the initial time $s=0$. We further recall that $\{b_n\}_{n=1}^\infty\subset C_c^\infty([0,T]\times\R^d)$ is the approximating sequence of $b$ such that $\lim_{n\rightarrow\infty}\Vert b_n-b\Vert_{L^p_xL^q_t}=0$ and $\sup_n\Vert b_n\Vert_{L^p_xL^q_t}\leq \Vert b\Vert_{L^p_xL^q_t}$. Let $X_{s,t}^{n,x}$ be the corresponding solution to \eqref{sec1-eq.2}  associated with the smooth vector field $b_n$.
			\begin{lemma}\label{sec5-prop.1}
				Let integers $k,p_1\geq1$ and $X_{s,t}^{n,x}$ be the unique strong solution to the SDE \eqref{sec1-eq.2} for $b_n(t,\cdot)\in L_x^pL_t^q\cap C_c^\infty(\R^d)$, where
				$$\frac{Hd}p+\frac1q<1-kH,\quad p,q\geq2,\;H<\frac12.$$
			\end{lemma}
			Then, $X_{s,t}^{n,x}$ is $k$-th-order differentiability in $x$ with the following estimate	
			\begin{equation}\label{sec5-eq.1}
				\sup_{s,t\in[0,T]}\sup_{s\in\R^d}\E\left\Vert\frac{\partial^k}{\partial x^k}X_{s,t}^{n,x}\right\Vert^{p_1}\leq C(H,d,p,q,T,k,p_1,\Vert b_n\Vert_{L^p_xL^q_t}).
			\end{equation}
			\begin{proof}
				For notational convenience, we assume $s=0$ and denote by $X_t^{n,x}$ the corresponding solutions. From \cite[eq.(5.10)]{banos2020strong}, we have
				$$\frac{\partial^k}{\partial x^k}X_t^{n,x}=J_1+\cdots+J_{2^{k-1}},$$
				where each $J_i,i=1,\cdots,2^{k-1}$ is a sum of iterated integrals over simplices of the form $\Delta^{m_j}_{0,u},0<u<t,j=1,\cdots,k$ with integrands, which have at most one product factor $\partial^k_xb$, whereas the other factors are the form $\partial^j_xb,j\leq k-1$. Without loss of generality, we restrict ourselves to the estimation of the summand $J_{2^{k-1}}$. We first introduce some notation: For given multi-indices $m=(m_1,\cdots,m_k)$ and $r=(r_1,\cdots,r_{k-1})$, we define
				$$m_j^{-1}:=\sum_{i=1}^jm_i \text{ \textit{and} }\sum_{\substack{m\geq1\\r_{\ell}\leq m_{\ell}^{-}\\\ell=1,\cdots,k-1}}:=\sum_{m_1\geq1}\sum_{r_1=1}^{m_1}\sum_{m_2\geq1}\sum_{r_2=1}^{m_2^{-}}\cdots\sum_{r_{k-1}=1}^{m_{k-1}^{-}}\sum_{m_k\geq1}.$$
				Then, by \cite[eq.(5.11)]{banos2020strong}, we have
				$$J_{2^{k-1}}=\sum_{\substack{m\geq1\\r_{\ell}\leq m_{\ell}^{-}\\\ell=1,\cdots,k-1}}\int_{\Delta_{0,t}^{m_1+\cdots+m_k}}\mathcal{M}_{m_1+\cdots+m_k}^{B^H}(s)\d s,$$
				where $\mathcal{M}_{m_1+\cdots+m_k}^{B^H}(s)$ has entries given by a sum of at most $C_d^{m_1+\cdots+m_k}$ terms, which are products of length $m_1+\cdots+m_k$ of function in
				$$\left\{\begin{array}{ll}
					\frac{\partial^{\alpha^{(1)}+\cdots+\alpha^{(d)}}}{\partial^{\alpha^{(1)}}x_1\cdots\partial^{\alpha^{(d)}}x_d}b_n^{(i)}(r,X_r^{n,x}),\qquad &i=1,\cdots,d\\
					|\alpha|=\alpha^{(1)}+\cdots+\alpha^{(d)}\leq k,\alpha^{(j)}\in \mathbb{N},\qquad &j=1,\cdots,d
				\end{array}\right\}.$$
				Then, we follow the reasoning line used in the proofs of \eqref{sec4-eq.9} and \eqref{sec4-eq.12}, choose $c,p_1,q_1\in[1,\infty)$ such that $cp_1=2^{q_1}$ for some integers $q_1$ and obtain by virtue of Theorem \ref{sec3-thm.1} and the H\"oder inequality that
				\begin{align}\label{sec5-eq.2}
					\E\Vert J_{2^{k-1}}\Vert^{p_1}\leq C\left(\sum_{\substack{m\geq1\\r_{\ell}\leq m_{\ell}^{-}\\\ell=1,\cdots,k-1}}\left\Vert\int_{\Delta_{0,t}^{m_1+\cdots+m_k}}\mathcal{M}_{m_1+\cdots+m_k}^{B^H}(s)\d s\right\Vert^{2^{q_1}}\right)^{\frac{p_1}{2^{q_1}}}.
				\end{align}
				Note that $\mathcal{M}_{m_1+\cdots+m_k}^{B^H}(s)$ can be expressed as a sum of the form
				$$\prod_{j=1}^{m_1+\cdots+m_k}f_j(s_j),\qquad f_j\in\mathcal{S},\;j=1,\cdots,m_1+\cdots+m_k,$$
				where
				$$\mathcal{S}:=\left\{\begin{array}{ll}
					\frac{\partial^{\alpha^{(1)}+\cdots+\alpha^{(d)}}}{\partial^{\alpha^{(1)}}x_1\cdots\partial^{\alpha^{(d)}}x_d}b_n^{(i)}(r,x+B_r^{n,x}),\qquad &i=1,\cdots,d\\
					|\alpha|=\alpha^{(1)}+\cdots+\alpha^{(d)}\leq k,\alpha^{(j)}\in \mathbb{N},\qquad &j=1,\cdots,d
				\end{array}\right\},$$ 
				and 
				$$\mathcal{S}:=\left(\int_{\Delta_{0,t}^{m_1+\cdots+m_k}}\prod_{j=1}^{m_1+\cdots+m_k}f_j(s_j)\d s\right)^{2^{q_1}}\d s$$
				can be rewritten as a sum of at most $C_{q_1}^{m_1+\cdots+m_k}$ terms of the form
				$$\int_{\Delta_{0,t}^{2^{q_1}(m_1+\cdots+m_k)}}\prod_{j=1}^{2^{q_1}(m_1+\cdots+m_k)}g_j(s_j)\d s$$
				where $g_j\in\mathcal{S}$ for all $j$ and the total order of the derivatives in the products of functions in the right-hand side of the above equality is given by $|\alpha|=2^q(m_1+\cdots+m_k+k-1)$. Then, by Corollary \ref{sec2-col.1}, the H\"oder inequality and Lemma \ref{appendix-lm.3}, we obtain
				\begin{align}\label{sec5-eq.3}
					&\E\left|\int_{\Delta_{0,t}^{2^{q_1}(m_1+\cdots+m_k)}}\prod_{j=1}^{2^{q_1}(m_1+\cdots+m_k)}g_j(s_j)\d s\right|\nonumber\\
					&\qquad\leq C_{H,d,p} \int_{\Delta_{0,t}^{2^{q_1}(m_1+\cdots+m_k)}}\prod_{j=1}^{2^{q_1}(m_1+\cdots+m_k)}\Vert b_n\Vert_{L^p_xL^q_t}(s_j)(s_j-s_{j-1})^{-H|\alpha_j|-\frac{Hd}p}\d s\nonumber\\
					&\qquad\leq C_{H,d,p,q} \frac{\prod_{j=1}^{2^{q_1}(m_1+\cdots+m_k)}\Gamma\left(1-q'(H|\alpha_j|+\frac{Hd}{p})\right)^{\frac1{q'}}\Vert b_n\Vert_{L^p_xL^q_t}^{2^{q_1}(m_1+\cdots+m_k)}}{\Gamma\left(2^{q_1}(m_1+\cdots+m_k)(1-q'\frac{Hd}p)-2^{q_1}(k-1)H+1\right)^{\frac1{q'}}}T^{2^q(m_1+\cdots+m_k)\kappa-2^{q_1}(k-1)H},
				\end{align}
				where $\kappa=1-H-Hd/p-1/q$. The conditions $|\alpha_j|\leq k$ for all $j$ and $1-kH-Hd/p-1/q>0$ ensure the convergence of the integral over $\Delta_{0,t}^{2^{q_1}(m_1+\cdots+m_k)}$. Consequently, substituting \eqref{sec5-eq.3} into \eqref{sec5-eq.3} and applying Stirling's formula conclude
				\begin{align*}
					&\E\Vert J_{2^{k-1}}\Vert^{p_1}\\
					&\qquad\leq C\Bigg(\sum_{m_1\geq1}\cdots\sum_{m_k\geq1}C_{q,d}^{m_1+\cdots+m_k}\Vert b_n\Vert_{L^p_xL^q_t}^{2^{q_1}(m_1+\cdots+m_k)}T^{2^q(m_1+\cdots+m_k)\kappa-2^{q_1}(k-1)H}\\
					&\qquad\qquad\times\frac{\Gamma\left(1-q'(H+\frac{Hd}{p})\right)^{\frac{2^{q_1}(m_1+\cdots+m_k)}{q'}}}{\Gamma\left(2^{q_1}(m_1+\cdots+m_k)(1-q'(H+\frac{Hd}p))+1\right)^{\frac1{q'}}}\Bigg)^{\frac{p_1}{2^{q_1}}}\\
					&\qquad\leq C\left(\sum_{m\geq1}\sum_{\ell+\cdots+\ell_k=m}C_{q,d}^{m}\Vert b_n\Vert_{L^p_xL^q_t}^{2^{q_1}m}T^{2^qm\tilde{\kappa}}(2^{q_1}m\tilde{\kappa})^{2^{q_1}\tilde{\kappa}}\right)^{\frac{p_1}{2^{q_1}}}\\
					&\qquad\leq C(H,d,p,q,T,k,m,\Vert b_n\Vert_{L^p_xL^q_t}),
				\end{align*}
				where $\tilde{\kappa}=1-kH-Hd/p-1/q>0$. The proof is complete.
			\end{proof}
			\begin{remark}
				It's worth noting that under the conditions \eqref{cond1} and \eqref{cond2}, Lemma \ref{sec5-prop.1} concludes the first-order differentiability of $X_{s,t}^{n,x}$ in $x$ and the validity of \eqref{sec5-eq.1}.
			\end{remark}
			\begin{proposition}\label{sec5-prop.2}
				Assume \eqref{cond1} and \eqref{cond2}. Let $p_1\geq1$ an integer and $X_{s,t}^{x}$ be the unique strong solution to the SDE \eqref{sec1-eq.2}. Then, there exists a constant  $C=C(H,d,p,q,T,p_1,$ $\Vert b\Vert_{L^p_xL^q_t})$ such that
				\begin{align}\label{sec5-eq.4}
					&\E\left|X_{s_1,t_1}^{x_1}-X_{s_2,t_2}^{x_2}\right|^{p_1}\nonumber\\
					&\qquad\leq C\left(|x_1-x_2|^{p_1}+|t_1-t_2|^{p_1H}+|s_1-s_2|^{p_1H}\right)\nonumber\\
					&\qquad+C\left(|t_1-t_2|^{p_1(1-H-\frac{Hd}p-\frac{1-H}q)}+|s_1-s_2|^{p_1(1-H-\frac{Hd}p-\frac{1-H}q)}+|s_1-s_2|^{\frac{p_1}2(1-\frac1q)}\right).
				\end{align}
				for all $s_1,s_2,t_1,t_2,x_1,x_2$.
				
				In particular, by the Kolmogorov-Chentsov theorem, there exists a continuous version of the random field $(s,t,x)\mapsto X_{s,t}         ^x$ with H\"older continuous trajectories of H\"oler constant $\alpha<H\wedge(1-H-Hd/p-(1-H)/q)\wedge (1-1/q)/2$ in $s,t$ and $\alpha<1$ in $x$.
			\end{proposition}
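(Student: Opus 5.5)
We will prove \eqref{sec5-eq.4} first for the smooth approximations $X^{n,x}_{s,t}$ with constants uniform in $n$. We then pass to the limit using the $L^2(\Omega)$ (and a.s.) convergence $X^{n,x}_{s,t}\to X^x_{s,t}$ from the proof of Theorem \ref{sec1-thm.1}, together with Fatou's lemma. We split
$$X_{s_1,t_1}^{x_1}-X_{s_2,t_2}^{x_2}=\big(X_{s_1,t_1}^{x_1}-X_{s_1,t_1}^{x_2}\big)+\big(X_{s_1,t_1}^{x_2}-X_{s_1,t_2}^{x_2}\big)+\big(X_{s_1,t_2}^{x_2}-X_{s_2,t_2}^{x_2}\big)$$
and estimate the three differences separately in $L^{p_1}$.

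\textbf{The $x$-increment.} Write $X^{n,x_1}_{s,t}-X^{n,x_2}_{s,t}=\int_0^1\frac{\partial}{\partial x}X^{n,x_2+\lambda(x_1-x_2)}_{s,t}\,\d\lambda\,(x_1-x_2)$. Jensen's inequality and Lemma \ref{sec5-prop.1} with $k=1$ then give the bound $C|x_1-x_2|^{p_1}$. This is admissible under \eqref{cond1}--\eqref{cond2} by the remark after that lemma. The constant depends only on $\sup_n\Vert b_n\Vert_{L^p_xL^q_t}\le \Vert b\Vert_{L^p_xL^q_t}$, since the Stirling-type series in its proof is controlled by this norm.

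\textbf{The $t$-increment.} With $s=s_1$ fixed, $X_{s,t_1}-X_{s,t_2}=\int_{t_2}^{t_1}b(r,X_{s,r})\,\d r+B^H_{t_1}-B^H_{t_2}$. The Gaussian part contributes $C|t_1-t_2|^{p_1H}$. For the drift part we argue as in the proof of Theorem \ref{sec1-thm.1}: we invoke \cite[Lemma 3.11]{butkovsky2023weak}, or equivalently transfer to $\tilde{\mathbb P}$ via Theorem \ref{sec3-thm.1} and Hölder's inequality, using \eqref{sec3-eq.7} and Corollary \ref{sec2-col.1} for the fBm started at time $s$. This gives $C|t_1-t_2|^{p_1(1-H-\frac{Hd}p-\frac{1-H}q)}$. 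The Girsanov argument requires the same theorem for the SDE started at time $s$; it holds by the same proof, with the kernel and density estimates of Remark \ref{sec2-rm.2} replacing the unconditional ones.

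\textbf{The $s$-increment (main obstacle).} Take $s_2<s_1\le t$ and use the flow property at the level of smooth $b_n$: $X^{x}_{s_2,t}=X^{X^x_{s_2,s_1}}_{s_1,t}$. Then
$$X^x_{s_1,t}-X^x_{s_2,t}=X^{x}_{s_1,t}-X^{y}_{s_1,t}\big|_{y=X^x_{s_2,s_1}}.$$
By the mean-value formula this is bounded by $\sup_\lambda\|\partial_xX_{s_1,t}^{\cdot}\|\cdot|x-X^x_{s_2,s_1}|$. Since $y$ is $\mathcal F_{s_1}$-measurable while the derivative depends on the future, we cannot simply plug in the sup-norm bound \eqref{sec5-eq.1} pointwise. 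Instead we apply Cauchy--Schwarz and bound the derivative moment at the random point via the conditional version of Lemma \ref{sec5-prop.1}, which is uniform in the initial point and uses Remark \ref{sec2-rm.2}. We then use $\E|X^x_{s_2,s_1}-x|^{2p_1}\le C(|s_1-s_2|^{2p_1H}+|s_1-s_2|^{2p_1(1-H-\frac{Hd}p-\frac{1-H}q)})$ from the $t$-increment step.

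The extra term $|s_1-s_2|^{\frac{p_1}2(1-\frac1q)}$ should account for the fact that the noise $B^H_t-B^H_{s}$ does not have stationary conditional structure in $s$. We bound it by the crude estimate $\E|\int_{s_2}^{s_1}b\,\d r|^{p_1}$ via Hölder in time, using $\Vert b\Vert_{L^q_t}$ and moments of $\int|b|^2$. Concretely, $\int_{s_2}^{s_1}|b(r,X_r)|\d r\le |s_1-s_2|^{\frac12(1-\frac1q)}\cdot(\int|b|^{\cdots})^{\cdots}$, whose moments are finite by the Khasminskii-type bound from Lemma \ref{sec3-lm.1}.

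Finally, the Kolmogorov--Chentsov theorem in the variables $(s,t,x)$, applied with $p_1$ large, gives the continuous version with the stated Hölder exponents. The exponent in $x$ is any $\alpha<1$, since $p_1$ can be taken arbitrarily large relative to the dimension $d+2$.
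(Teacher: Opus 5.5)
Your $x$-increment (integral form of the mean-value formula, Jensen, Lemma \ref{sec5-prop.1} with $k=1$) and your $t$-increment (Gaussian term plus Lemma 3.11 of \cite{butkovsky2023weak}) are essentially the paper's argument. Your $s$-increment, however, takes a genuinely different route.

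The paper orders $s_1<s_2<t_1<t_2$ and subtracts the two integral equations. It then treats $\int_{s_2}^{t_1}[b_n(r,X^{n,x_2}_{s_1,r})-b_n(r,X^{n,x_2}_{s_2,r})]\,\d r$ in three moves. First, it replaces the two solutions by $x+B^H_r-B^H_{s_1}$ and $x+B^H_r-B^H_{s_2}$. Second, it conditions on $\mathcal{F}_{s_2}$ via $V_{s_2,r}=B^H_r-\E_{s_2}B^H_r$. Third, it applies Lemma 3.10 of \cite{butkovsky2023weak} in $\mathcal{B}^{-\lambda}_p$, together with $\Vert g(\cdot+\delta)-g\Vert_{\mathcal{B}^{-\lambda}_p}\le|\delta|^\lambda\Vert g\Vert_{\mathcal{B}^{0}_p}$, where $\lambda=\frac12(1-\frac1q)$ and $\delta=B^H_{s_2}-B^H_{s_1}$. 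This is where the term $|s_1-s_2|^{\frac{p_1}{2}(1-\frac1q)}$ comes from.

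Your identity $X^x_{s_2,t}=X^{X^x_{s_2,s_1}}_{s_1,t}$ instead reduces everything to a first-derivative moment times $\E|X^x_{s_2,s_1}-x|^{2p_1}$. This gives what each route buys:
\begin{itemize}
\item You get the bound without the $(1-\frac1q)/2$ term. That is harmless, since it is an extra nonnegative term on the right of \eqref{sec5-eq.4}.
\item You only ever apply Girsanov to one solution at a time. The paper's reduction uses a single change of measure for the two different solutions $X^{x_2}_{s_1,\cdot}$ and $X^{x_2}_{s_2,\cdot}$, which is not what Theorem \ref{sec3-thm.1} provides.
\item The price is a derivative bound at a random point.
\end{itemize}

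As written, there is a genuine gap exactly there, and Remark \ref{sec2-rm.2} alone does not close it. Lemma \ref{sec5-prop.1} passes from $X^n$ to $x+B^H$ through the measure change of Theorem \ref{sec3-thm.1}. So you must redo Lemma \ref{sec3-lm.1} and Theorem \ref{sec3-thm.1} for the equation started at time $s_1$ from an $\mathcal{F}_{s_1}$-measurable point $z$, with moments of $\d\mathbb{P}/\d\tilde{\mathbb{P}}$ bounded uniformly in $z$.

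This is feasible with the paper's tools:
\begin{itemize}
\item $u_r=b_n(r,X^{z}_{s_1,r})\mathbbm{1}_{\{r\geq s_1\}}$ is adapted and $K_H^{-1}$ is causal, so $\tilde{\mathbb{P}}=\mathbb{P}$ on $\mathcal{F}_{s_1}$.
\item The iterated integrals of $\nabla b_n(u,z+B^H_u-B^H_{s_1})$ are then handled by Corollary \ref{sec2-col.1} conditioned on $\mathcal{F}_{s_1}$. The $\mathcal{F}_{s_1}$-measurable shift $z-B^H_{s_1}$ does not change $\Vert b_n\Vert_{L^p_x}(u)$.
\end{itemize}
You still need to write this lemma out.

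Two further corrections:
\begin{itemize}
\item Keep the integral form $\int_0^1\partial_xX^{x+\lambda(y-x)}_{s_1,t}\,\d\lambda$ with Jensen, as in your $x$-step, rather than $\sup_\lambda$. The supremum would require a bound over a continuum of random points.
\item Drop the paragraph on the ``extra term''. It is unnecessary in your route, and the sketch is invalid. A H\"older-in-time bound for $\int_{s_2}^{s_1}|b(r,X_r)|\,\d r$ needs moments of $\int_0^T|b(r,X_r)|^2\,\d r$, that is, a Krylov-type estimate for $|b|^2\in L^{p/2}_xL^{q/2}_t$. That requires $\frac{2Hd}{p}+\frac{2}{q}<1$, which is not implied by \eqref{cond1} when $H<\frac12$. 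Also, Lemma \ref{sec3-lm.1} bounds the Girsanov kernel, not $\int|b|^2$.
\end{itemize}
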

			\begin{proof}
				Let $X_{s,t}^{n,x}$ be the sequence approximating $X_{s,t}^x$ defined as before. Without loss of generality, we let $0\leq s_1<s_2<t_1<t_1\leq T$. Then,
				\begin{align*}
					&X_{s_1,t_1}^{n,x_1}-X_{s_2,t_2}^{n,x_2}\\
					&\qquad=x_1-x_2+\int_{s_1}^{t_1}b_n(r,X_{s_1,r}^{n,x_1})\d r-\int_{s_2}^{t_2}b_n(r,X_{s_2,r}^{n,x_2})\d r\\
					&\qquad+(B_{t_1}^H-B_{s_1}^H)-(B_{t_2}^H-B_{s_2}^H)\\
					&\qquad=x_1-x_2+\int_{s_1}^{s_2}b_n(r,X_{s_1,r}^{n,x_1})\d r-\int_{t_1}^{t_2}b_n(r,X_{s_2,r}^{n,x_2})\d r\\
					&\qquad+\int_{s_2}^{t_1}\left[b_n(r,X_{s_1,r}^{n,x_1})-b_n(r,X_{s_1,r}^{n,x_2})\right]\d r\\
					&\qquad+\int_{s_2}^{t_1}\left[b_n(r,X_{s_1,r}^{n,x_2})-b_n(r,X_{s_2,r}^{n,x_2})\right]\d r\\
					&\qquad+(B_{t_2}^H-B_{t_1}^H)+(B_{s_1}^H-B_{s_2}^H).
				\end{align*}
				Using \cite[Lemma 3.11]{butkovsky2023weak} for the second and third term of the right-hand side of the above equality, we obtain 
				\begin{align}\label{sec5-eq.5}
					&\E\left|X_{s_1,t_1}^{x_1}-X_{s_2,t_2}^{s_2,x_2}\right|^{p_1}\nonumber\\
					&\qquad\leq C\left(|x_1-x_2|^{p_1}+|t_2-t_1|^{p_1H}+|s_2-s_1|^{p_1H}\right)\nonumber\\
					&\qquad+ C\left(|t_2-t_1|^{p_1(1-H-\frac{Hd}p-\frac{1-H}q)}+|s_2-s_1|^{p_1(1-H-\frac{Hd}p-\frac{1-H}q)}\right)\nonumber\\
					&\qquad+\E\left|\int_{s_2}^{t_1}\left[b_n(r,X_{s_1,r}^{n,x_1})-b_n(r,X_{s_1,r}^{n,x_2})\right]\d r\right|^{p_1}\nonumber\\
					&\qquad+\E \left|\int_{s_2}^{t_1}\left[b_n(r,X_{s_1,r}^{n,x_2})-b_n(r,X_{s_2,r}^{n,x_2})\right]\d r\right|^{p_1}.
				\end{align}
				Using the fact that $X_{s,t}^{n,\cdot}$ is a stochastic flow of diffeomorphisms (see \cite{kunita1990stochastic}), mean value theorem and Lemma \ref{sec5-prop.1}, we have,
				\begin{align}
					&\E\left|\int_{s_2}^{t_1}\left[b_n(r,X_{s_1,r}^{n,x_1})-b_n(r,X_{s_1,r}^{n,x_2})\right]\d r\right|^{p_1}\nonumber\\
					&\qquad\leq |x_1-x_2|^{p_1}\E\left\Vert\int_{s_2}^{t_1}\int_0^1\nabla b_n(r,X_{s_1,r}^{n,x_1+\tau(x_2-x_1)})\frac{\partial}{\partial x} X_{s_1,r}^{n,x_1+\tau(x_2-x_1)}\d \tau\d r\right\Vert^{p_1}\nonumber\\
					&\qquad\leq |x_1-x_2|^{p_1}\int_0^1\E\left\Vert\int_{s_2}^{t_1}\nabla b_n(r,X_{s_1,r}^{n,x_1+\tau(x_2-x_1)})\frac{\partial}{\partial x} X_{s_1,r}^{n,x_1+\tau(x_2-x_1)}\d r\right\Vert^{p_1}\d \tau\nonumber\\
					&\qquad = |x_1-x_2|^{p_1}\int_0^1\E\left\Vert\frac{\partial}{\partial x}X_{s_1,t_1}^{n,x_1+\tau(x_2-x_1)}-\frac{\partial}{\partial x}X_{s_1,s_2}^{n,x_1+\tau(x_2-x_1)}\right\Vert^p\d \tau\nonumber\\
					&\qquad \leq 2|x_1-x_2|^{p_1}\sup_{s,t\in[0,T]}\sup_{x\in\R^d}\E\left\Vert\frac{\partial}{\partial x}X_{s,t}^{n,x}\right\Vert^{p_1}\leq C|x_1-x_2|^{p_1}.
				\end{align}
				As for the last term in the right-hand side of \eqref{sec5-eq.5}, combining Theorem \ref{sec3-thm.1} and the Cauchy-Schwarz inequality yields that
				\begin{align*}
					&\E \left|\int_{s_2}^{t_1}\left[b_n(r,X_{s_1,r}^{n,x_2})-b_n(r,X_{s_2,r}^{n,x_2})\right]\d r\right|^{p_1}\\
					&\qquad\leq C\left(\E\left|\int_{s_2}^{t_1}\left[b_n(r,x+B_r^H-B_{s_1}^H)-b(r,x+B_r^H-B_{s_2}^H)\right]\d r\right|^{2p_1}\right)^{\frac12}.
				\end{align*}
				Set $f_\delta(r,\cdot)=b_n(r,\cdot+\delta)-b_n(r,\cdot)$ and $V_{s,r}=B_r^H-\E_sB_r^H$. Then, by the tower property of conditional expectation and \cite[Lemma 3.10]{butkovsky2023weak}, we have
				\begin{align}\label{sec5-eq.7}
					&\E\left|\int_{s_2}^{t_1}\left[b_n(r,x+B_r^H-B_{s_1}^H)-b(r,x+B_r^H-B_{s_2}^H)\right]\d r\right|^{2p_1}\nonumber\\
					&\qquad=\E \E_{s_2}\Bigg|\int_{s_2}^{t_1}b_n\left(r,x+V_{s_2,r}+(\E_{s_2}B_r^H-B_{s_2}^H)+(B_{s_2}^H-B_{s_1}^H)\right)\nonumber\\
					&\qquad\qquad\qquad\qquad\qquad\qquad\qquad-b\left(r,x+V_{s_2,r}+(\E_{s_2}B_r^H-B_{s_2}^H)\right)\d r\Bigg|^{2p_1}\nonumber\\
					&\qquad =\E\E\left|\int_{s_2}^{t_1}f_{\delta}(V_{s_1,r}+x_r)\right|^{p_1}\Bigg|_{\substack{\delta=B_{s_2}^H-B_{s_1}^H\\ x_r=x+\E_{s_2}B_r^H-B_{s_2}^H}}\nonumber\\
					&\qquad\leq C \E\Vert f_\delta\Vert^{p_1}_{L^q([0,T],\mathcal{B}_{p}^{-\lambda}(\R^d))}(t_1-s_2)^{p_1(1-\frac{Hd}p-\frac1 q-\lambda H)}\nonumber\\
					&\qquad \leq C \E|B_{s_2}^H-B_{s_1}^H|^{p_1\lambda}\Vert b_n\Vert^{p_1}_{L^q([0,T],\mathcal{B}_{p}^{0}(\R^d))}\leq C|s_1-s_2|^{Hp_1\lambda}\Vert b_n\Vert^{p_1}_{L^p_xL^q_t}
				\end{align}
				where $\lambda=(1-1/q)/2$. For the second equality, we use the independence of $\mathcal{F}_s$ and the process $(V_{s,t})_{t\geq s}$. The penultimate inequality relies on the standard Besov estimate  $\Vert g(\cdot+\delta)-g(\cdot)\Vert_{\mathcal{B}_p^\alpha(\R^d)}\leq |\delta|^\lambda\Vert g\Vert_{\mathcal{B}_p^{\alpha+\gamma}(\R^d)}$ for $\gamma\in[0,1]$ (see \cite[Lemma A.2]{athreya2024well}) and the final inequality follows from the Besov embedding $\Vert\cdot\Vert_{B_p^0(\R^d)}\leq \Vert\cdot\Vert_{L^p}$. We point out that all the assumptions in \cite[Lemma 3.10]{butkovsky2023weak} are satisfied, thanks to the restrictions
				$$\frac1q+\frac{Hd}p-\frac H2\left(1-\frac1q\right)<\frac1q+\frac{Hd}p-H<1,\qquad H\left(1-\frac1q\right)<1,$$
				and hence the first inequality holds. Combining \eqref{sec5-eq.5}--\eqref{sec5-eq.7} completes the proof.
			\end{proof}
			\begin{lemma}\label{sec5-lm.4}
				Assume \eqref{cond1} and \eqref{cond2}. Then, for any $\varphi\in C_c^\infty(\R^d)$, the sequence
				$$\langle X_t^{n,\cdot},\varphi\rangle=\int_{\R^d}\langle X_t^{n,x},\varphi(x)\rangle_{\R^d}\ dx$$
				converges to $\langle X_t^{\cdot},\varphi\rangle$ in $L^2(\Omega,\R)$.
			\end{lemma}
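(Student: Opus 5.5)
The plan is to prove the $L^2$ convergence in two stages: first show that the integral against $\varphi$ of the pointwise squared error tends to zero, then pass to the limit under the $x$-integral. By Cauchy--Schwarz in $x$ over the compact support $K:=\mathrm{supp}\,\varphi$,
\begin{equation*}
\E\left|\langle X_t^{n,\cdot}-X_t^{\cdot},\varphi\rangle\right|^2\leq |K|\,\Vert\varphi\Vert_\infty^2\int_K\E\left|X_t^{n,x}-X_t^x\right|^2\d x .
\end{equation*}
So it suffices to prove that $\E|X_t^{n,x}-X_t^x|^2\to0$ for every fixed $x$, together with a bound on this quantity that is uniform in $n$ and $x\in K$. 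Dominated convergence in $x$ then finishes the argument.

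The uniform bound follows from Corollary~\ref{sec3-col.1}. Its constant depends only on $\Vert b_n\Vert_{L^p_xL^q_t}\leq\Vert b\Vert_{L^p_xL^q_t}$, so
\begin{equation*}
\E|X_t^{n,x}|^2+\E|X_t^x|^2\leq C(1+|x|^2),
\end{equation*}
which is bounded on $K$.

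For pointwise convergence I would reuse the proof of Theorem~\ref{sec1-thm.1}. For fixed $x$, Lemmas~\ref{sec4-lm.4}, \ref{sec4-lm.5} and Corollary~\ref{sec3-col.1} make $\{X_t^{n,x}\}_n$ relatively compact in $L^2(\Omega)$ by Theorem~\ref{sec2-thm.2}. Every subsequential $L^2$-limit is, after the limiting argument of that proof, a strong solution starting from $x$. Pathwise uniqueness then identifies it with $X_t^x$. Since every subsequence has a further subsequence converging to the same limit, the whole sequence converges, so $\E|X_t^{n,x}-X_t^x|^2\to0$ for each $x$.

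The main obstacle is a measurability and joint-version issue. We need $x\mapsto X_t^x$ to be jointly measurable in $(x,\omega)$ so that $\langle X_t^\cdot,\varphi\rangle$ is a random variable and Fubini applies. For this I would take the continuous version from Proposition~\ref{sec5-prop.2} and verify that, for each fixed $x$, it coincides a.s.\ with the strong solution, which holds by uniqueness.

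A subtler point is the identification of subsequential limits. In the proof of Theorem~\ref{sec1-thm.1} the passage $\int_0^t b_n(r,X^n_r)\d r\to\int_0^t b(r,X_r)\d r$ uses a.s.\ convergence along a subsequence at rational times. I would check that this argument needs only convergence in probability at each time plus continuity of paths. If that step proves delicate, an alternative is to show that $\langle X_t^{n,\cdot},\varphi\rangle$ is itself relatively compact in $L^2(\Omega)$ via Theorem~\ref{sec2-thm.2}. The Malliavin bounds integrate in $x$ against $\varphi$ by Minkowski's inequality, and any limit equals $\langle X_t^\cdot,\varphi\rangle$ because of the pointwise identification in law through Girsanov (Theorem~\ref{sec3-thm.1}).
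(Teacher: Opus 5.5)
Your argument is correct but organized differently from the paper's. The paper never goes through pointwise convergence in $x$. Instead:
\begin{enumerate}
\item It applies the compactness criterion (Theorem~\ref{sec2-thm.2}) directly to the integrated functionals $\langle X_t^{n,\cdot},\varphi\rangle$, bounding their Malliavin quantities by $\sup_{x\in U}$ of the pointwise ones.
\item It identifies every $L^2$-subsequential limit by proving weak convergence. This means testing against the dense family $\Sigma_t$ of exponentials of fBm increments and evaluating these expectations via the Girsanov transform (Theorem~\ref{sec3-thm.1}) together with $\Xi_T^n\to\Xi_T$ in $L^{p_1}$ (Corollary~\ref{sec3-col.2}).
\item The subsequence principle then gives convergence of the whole sequence.
\end{enumerate}

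You instead prove the stronger statement $\int_K\E|X_t^{n,x}-X_t^x|^2\,\d x\to0$. You use compactness only for fixed $x$ (exactly as in the proof of Theorem~\ref{sec1-thm.1}), pathwise uniqueness to identify every subsequential limit, the $x$-uniform moment bound behind Corollary~\ref{sec3-col.1}, and dominated convergence in $x$.

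Your route is shorter. It yields convergence against any bounded compactly supported test function, and it avoids the exponential-density identification. That identification in the paper glosses over the fact that the Girsanov density depends on $x$, so a Fubini and pointwise-in-$x$ step is implicitly needed there as well. The price is that you rely on pathwise uniqueness and on the fact that the limiting argument of Theorem~\ref{sec1-thm.1} works along every subsequence. The paper's identification, in the Meyer-Brandis--Proske spirit, uses only that $X^x$ is a strong solution whose law is fixed by Girsanov: $B^H=X^x-x-\int_0^\cdot b(r,X_r^x)\,\d r$ is a path functional of $X^x$, so $\E[\langle X_t^{\cdot},\varphi\rangle\xi]$ depends only on that law.

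Two small corrections. First, your fallback phrased as identification ``in law'' would not by itself pin down an $L^2$-limit. You would need, as the paper does, to test against a total subset of $L^2(\Omega,\mathcal{F}_t,\mathbb{P})$ generated by $B^H$. Second, the continuous version from Proposition~\ref{sec5-prop.2} agrees a.s.\ with $X_t^x$ for each fixed $x$ simply because it is a modification, not by uniqueness.
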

			\begin{proof}
				Denote by $U$ the compact support of $\varphi$. Note that
				$\sup_n\E|\langle X_t^n,\varphi\rangle|^2\leq \Vert\varphi\Vert_{L^2}\sup_{x\in U}$ $\E|X_t^{n,x}|^2<\infty$,
				$$\E|\D_\theta \langle X_t^{n,\cdot},\varphi\rangle|^2=\E|\langle \D_\theta X_t^{n,\cdot},\varphi\rangle|^2\leq \Vert\varphi\Vert_{L^2}|U|\sup_{x\in U}\E|\D_\theta X_t^{n,x}|^2,$$
				and
				\begin{align*}
					&\E|\D_\theta \langle X_t^{n,\cdot},\varphi\rangle-\D_{\theta'}\langle X_t^{n,\cdot},\varphi\rangle|^2\\
					&\qquad =\E|\langle \D_\theta X_t^{n,\cdot}--\D_{\theta'} X_t^{n,\cdot},\varphi\rangle|^2\\
					&\qquad\leq \Vert\varphi\Vert_{L^2}|U|\sup_{x\in U}\E|\D_\theta X_t^{n,x}-\D_\theta X_t^{n,x}|^2.
				\end{align*}
				Thus, by Corollary \ref{sec3-col.1}, Lemma \ref{sec4-lm.4} and Lemma \ref{sec4-lm.5}, we can invoke Theorem \ref{sec2-thm.2} to obtain a subsequence $\langle X_t^{n(k),\cdot},\varphi\rangle$ converging in $L^2$ to a limit, which we denote by $Y(\varphi)$. If we can prove that $\langle X_t^{n,\cdot},\varphi\rangle$ converges weakly to $\langle X_t^{\cdot},\varphi\rangle$ as $n\rightarrow\infty$, then we can conclude that this convergence holds in $L^2(\Omega,\R)$. Indeed, by uniqueness of the limit, we have $Y(\varphi)=\langle X_t^{\cdot},\varphi\rangle$. Assume that there exist an $\varepsilon>0$ and a subsequence $X_t^{n(k),x}$ such that
				$$|\langle X_t^{n(k),\cdot},\varphi\rangle-\langle X_t^{\cdot},\varphi\rangle|\geq \varepsilon$$
				for all $k$. However, applying the above procedure to $\langle X_t^{n(k),\cdot},\varphi\rangle$ gives a further subsequence converging to $\langle X_t^{\cdot},\varphi\rangle$, which gives a contradiction. As a result, it remains to verify that $\langle X_t^{n,\cdot},\varphi\rangle$ converges weakly to $\langle X_t^{\cdot},\varphi\rangle$. Note that
		        $$\Sigma_t:=\left\{\exp\left(\sum_{j=1}^m\langle\alpha_j,B_{t_j}^H-B_{t_{j-1}}^H\rangle_{\R^d}\right):\{\alpha_j\}_{j=1}^m\subset\R^d,0=t_0<t_1<\cdots<t_m=t,m\geq1\right\}$$
		        is a dense subspace of $L^2(\Omega,\mathcal{F}_t,\mathbb{P})$. Thus, it suffices to show that
		        $$\lim_{n\rightarrow\infty}\E|\langle X_t^{n(k),\cdot}\rangle\xi-\langle X_t^{n(k),\cdot}\rangle\xi|=0,\qquad\forall\;\xi\in\Sigma_t.$$
		        Using the inequality $|e^x-e^y|\leq |e^x+e^y||x-y|$, it's not hart to see that
		        $$\exp\left(\langle\alpha,\int_s^tb_n(r,B_r^H)\d r\rangle_{\R^d}\right)\xrightarrow[n\rightarrow\infty]{L^{p_1} \text{ for integers } p_1\geq 1}\exp\left(\langle\alpha,\int_s^tb(r,B_r^H)\d r\rangle_{\R^d}\right)$$
		        for any $\alpha\in\R^d$ and $s,t\in[0,T]$. In fact, by Corollary \ref{sec2-col.1} and Lemma \ref*{appendix-lm.3}, we have for any $f\in L^p_xL_t^q$ that
		        \begin{align*}
			        \E\int_{\Delta_{s,t}^{m}}\prod_{j=1}^m \left|f(r_j,B_{r_j}^H)\right|\d r&\leq C_{H,d,p} \int_{\Delta_{s,t}^{m}}\prod_{j=1}^m \Vert f\Vert_{L^p_x}(r)(r_j-r_{j-1})^{-\frac{Hd}p}\d r\\
			        &\leq C_{H,d,p}\frac{\Gamma\left(1-q'(H+\frac{Hd}{p})\right)^{\frac{m}{q'}}\Vert f \Vert_{L^p_xL^q_t}^m}{\Gamma\left(m(1-q'(H+\frac{Hd}p))+1\right)^{\frac1{q'}}},
		        \end{align*}
		        where $r_0=0$. Hence,
		        \begin{align*}
			    &\E\exp\left(p_1\langle\alpha,\int_s^tb_n(r,B_r^H)\d r\rangle_{\R^d}\right)+\E\exp\left(p_1\langle\alpha,\int_s^tb(r,B_r^H)\d r\rangle_{\R^d}\right)\\
			    &\qquad \leq C_{H,d,p}\sum_{m=1}^\infty \frac{p_1^m|\alpha|^m\Gamma\left(1-q'(H+\frac{Hd}{p})\right)^{\frac{m}{q'}}\left(\Vert b_n \Vert_{L^p_xL^q_t}^m+\Vert b \Vert_{L^p_xL^q_t}^m\right)}{\Gamma\left(m(1-q'(H+\frac{Hd}p))+1\right)^{\frac1{q'}}}<\infty
		    \end{align*}
		    and 
		    $$\E\left|\int_s^t\left[b_n(r,B_r^H)-b(r,B_r^H)\right]\d r\right|^{p_1}\leq C_{H,d,p,q,p_1}\Vert b_n-b\Vert_{L^p_xL^q_t}^{p_1}\xrightarrow{n\rightarrow\infty}0.$$
		    Finally, using Theorem \ref{sec3-thm.1} and Corollary \ref{sec3-col.2} concludes
		    \begin{align*}
			&\E\left[\langle X_t^{n,\cdot},\varphi\rangle\exp\left(\sum_{j=1}^m\langle\alpha_j,B_{t_j}^H-B_{t_{j-1}}^H\rangle_{\R^d}\right)\right]\\
			&\quad\,=\E\left[\langle X_t^{n,\cdot},\varphi\rangle\exp\left(\sum_{j=1}^m\langle\alpha_j,X_{t_j}^{n,x}-X_{t_{j-1}}^{n,x}-\int_{t_{j-1}}^{t_j}b_n(r,X_r^{n,x})\d r\rangle_{\R^d}\right)\right]\\
			&\quad\,=\E^{\tilde{\mathbb{P}}}\left[\langle\cdot+B_t^H,\varphi\rangle\exp\left(\sum_{j=1}^m\langle\alpha_j,\tilde{B}_{t_j}^H-\tilde{B}_{t_{j-1}}^H-\int_{t_j}^{t_{j-1}}b_n(r,x+\tilde{B}_r^H)\d r\rangle_{\R^d}\right)\Xi_T^n\right]\\
			&\xrightarrow{n\rightarrow\infty}\E^{\tilde{\mathbb{P}}}\left[\langle\cdot+B_t^H,\varphi\rangle\exp\left(\sum_{j=1}^m\langle\alpha_j,\tilde{B}_{t_j}^H-\tilde{B}_{t_{j-1}}^H-\int_{t_j}^{t_{j-1}}b(r,x+\tilde{B}_r^H)\d r\rangle_{\R^d}\right)\Xi_T\right]\\
			&\quad\,=\E\left[\langle X_t^{\cdot},\varphi\rangle\exp\left(\sum_{j=1}^m\langle\alpha_j,B_{t_j}^H-B_{t_{j-1}}^H\rangle_{\R^d}\right)\right],
		    \end{align*}
		    which implies that $\langle X_t^{n,\cdot},\varphi\rangle$ converges weakly to $\langle X_t^{\cdot},\varphi\rangle$ as $n$ tends to infinity. Here, $\Xi_T^n$ and $\Xi_T$ are defined by substituting $W_s$ with $\tilde{W}_s$ in the corresponding quantities as in Corollary \ref{sec3-col.2}, respectively. The proof is complete.
	        \end{proof}
 			\begin{lemma}\label{sec5-lm.5}
 				Assume \eqref{cond1} and \eqref{cond2}. Let $U$ be an open and bounded subset of $\R^d$. Then, for $p_1\in(1,\infty)$,
 				$$X_t^{\cdot}\in L^2(\Omega;W^{1,p_1}(U)).$$
 			\end{lemma}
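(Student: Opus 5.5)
The plan follows the standard weak-compactness route of Mohammed--Nilssen--Proske, using the smooth approximations $X_t^{n,x}$ and the results proved above.

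\textbf{Step 1: uniform bound on the approximations.} First we show that the sequence $X_t^{n,\cdot}$ is bounded in $L^2(\Omega;W^{1,p_1}(U))$ uniformly in $n$.

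For the zeroth-order part, Fubini and Jensen give
\[
\E\Vert X_t^{n,\cdot}\Vert_{L^{p_1}(U)}^2\le |U|^{\frac{2}{p_1}-1}\vee 1\cdot\Big(\int_U\E|X_t^{n,x}|^{\max(p_1,2)}\d x\Big)^{\frac{2}{\max(p_1,2)}} .
\]
This is bounded by Corollary \ref{sec3-col.1}, whose constant depends only on $\Vert b_n\Vert_{L^p_xL^q_t}\le\Vert b\Vert_{L^p_xL^q_t}$ and is uniform for $x$ in the bounded set $U$.

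For the gradient part, we invoke Lemma \ref{sec5-prop.1} with $k=1$; it applies under \eqref{cond1}--\eqref{cond2}, as noted in the subsequent remark. It gives $\sup_n\sup_x\E\Vert\partial_xX_t^{n,x}\Vert^{r}\le C$ for every integer $r$. We must check that this constant depends on $b_n$ only through $\Vert b_n\Vert_{L^p_xL^q_t}$, which the Stirling-type bound in its proof does. The same Fubini/H\"older argument then yields
\[
\sup_n\E\Vert X_t^{n,\cdot}\Vert_{W^{1,p_1}(U)}^2<\infty .
\]

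\textbf{Step 2: extraction of a weak limit.} For $p_1\in(1,\infty)$ the space $L^2(\Omega;W^{1,p_1}(U))$ is reflexive, since $W^{1,p_1}(U)$ is reflexive. Hence a subsequence $X_t^{n(k),\cdot}$ converges weakly to some $Y\in L^2(\Omega;W^{1,p_1}(U))$.

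\textbf{Step 3: identification $Y=X_t^\cdot$.} This is the main obstacle, since a priori we only know pointwise-in-$x$ and $L^2$-in-$\omega$ convergence of test pairings. Fix $A\in\mathcal{F}$ and $\varphi\in C_c^\infty(U)$. The functional
\[
Z\mapsto\E\big[\mathbbm{1}_A\langle Z,\varphi\rangle\big]
\]
is continuous on $L^2(\Omega;W^{1,p_1}(U))$, so $\E[\mathbbm{1}_A\langle X_t^{n(k),\cdot},\varphi\rangle]\to\E[\mathbbm{1}_A\langle Y,\varphi\rangle]$.

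On the other hand, Lemma \ref{sec5-lm.4} gives $\langle X_t^{n,\cdot},\varphi\rangle\to\langle X_t^\cdot,\varphi\rangle$ in $L^2(\Omega)$. Therefore
\[
\E[\mathbbm{1}_A\langle Y-X_t^\cdot,\varphi\rangle]=0\quad\text{for all }A,\varphi .
\]
Taking a countable dense family of $\varphi$ in $C_c^\infty(U)$, we get $\langle Y,\varphi\rangle=\langle X_t^\cdot,\varphi\rangle$ for all $\varphi$ in that family on a common full-measure set. Hence $Y(\omega)=X_t^\cdot(\omega)$ a.e. on $U$ for a.e.\ $\omega$. Here we use that $x\mapsto X_t^x$ is measurable in $(x,\omega)$, with the continuous version from Proposition \ref{sec5-prop.2}, and locally integrable.

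\textbf{Conclusion.} Consequently $X_t^\cdot\in L^2(\Omega;W^{1,p_1}(U))$. By weak lower semicontinuity of the norm, it also inherits the bound $\E\Vert X_t^\cdot\Vert^2_{W^{1,p_1}(U)}\le\liminf_k\E\Vert X_t^{n(k),\cdot}\Vert^2_{W^{1,p_1}(U)}$.
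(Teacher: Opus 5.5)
Your proposal is correct and follows essentially the same route as the paper: a uniform bound on $\partial_x X_t^{n,x}$ from Lemma \ref{sec5-prop.1}, extraction of a weakly convergent subsequence in a reflexive Bochner space, and identification of the limit through Lemma \ref{sec5-lm.4}. The differences are minor. You take the weak limit of $X_t^{n,\cdot}$ itself in $L^2(\Omega;W^{1,p_1}(U))$ and identify it with $X_t^{\cdot}$, and you also supply the zeroth-order bound via Corollary \ref{sec3-col.1} and the countable-dense-family argument that the paper leaves implicit; the paper instead takes the weak limit $Y$ of the gradients and identifies it as the weak derivative of $X_t^{\cdot}$ by integrating by parts against $\varphi'$.
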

			\begin{proof}
				From Lemma \ref{sec5-prop.1}, we know that 
				$$\sup_n\sup_{t\in[0,T]}\sup_{x\in\R^d}\E\left\Vert\frac{\partial}{\partial x}X_{s,t}^{n,x}\right\Vert^{p_1}<\infty.$$
				Therefore, there exists a subsequence of $\frac{\partial}{\partial x}X_t^{n,x}$ (still denoted by $\frac{\partial}{\partial x}X_t^{n,x}$) converging in the weak topology to an element $Y\in L^2(\Omega,W^{1,p_1}(\R^d,w))$ since $L^2(\Omega,W^{1,p_1}(\R^d,w))$ is reflexive and $\{X_t^{n,\cdot}\}_{n=1}^\infty$ is uniformly bounded in $L^2(\Omega,W^{1,p_1}(\R^d,w))$ norm. Then, we have for all $A\subset \mathcal{F}$ and $\varphi\in C_c^\infty(\R^d)$, 
				\begin{align*}
					\E\left[\mathbbm{1}_A\langle X_t^{\cdot},\varphi'\rangle\right]&\xlongequal{\text{by Lemma \ref{sec5-lm.4}}}\lim_{n\rightarrow\infty}\E\left[\mathbbm{1}_A\langle X_t^{n,\cdot},\varphi'\rangle\right]\\
					&=-\lim_{n\rightarrow\infty}\E\left[\mathbbm{1}_A\left\langle \frac{\partial}{\partial x}X_t^{n,\cdot},\varphi\right\rangle\right]=-\E\left[\mathbbm{1}_A\langle  Y,\varphi\rangle\right].
				\end{align*}
				It follows that $Y$ coincide with the weak derivative of $X_t^x$. This proves the lemma.
			\end{proof}
			
			We now turn to the weighted Sobolev space. Using the same techniques as in the above lemma, we have the following.
 			\begin{lemma}\label{sec5-lm.3}
 				Assume \eqref{cond1} and \eqref{cond2}. Then, for $p_1\in(1,\infty)$,
 				$$X_t^{\cdot}\in L^2(\Omega;W^{1,p_1}(\R^d,w)).$$
 			\end{lemma}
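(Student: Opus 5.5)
The plan is to rerun the argument of Lemma \ref{sec5-lm.5} with the Lebesgue measure on $U$ replaced by the finite weighted measure $w(x)\,\d x$. The moment condition $\int_{\R^d}(1+|x|^{p_1})w(x)\d x<\infty$ is meant to replace the boundedness of $U$.

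\textbf{Step 1: uniform bound.} We first bound the approximations $X_t^{n,\cdot}$ uniformly in $n$ in $L^2(\Omega;W^{1,p_1}(\R^d,w))$. By Jensen's inequality it suffices to bound
$$\E\int_{\R^d}\Big(|X_t^{n,x}|^{p_1}+\Big\Vert\tfrac{\partial}{\partial x}X_t^{n,x}\Big\Vert^{p_1}\Big)w(x)\d x$$
together with the corresponding power $2/p_1$ (or use $L^{2p_1/ p_1}$-type moments and Fubini). For the derivative term, Lemma \ref{sec5-prop.1} (valid under \eqref{cond1}, \eqref{cond2} by the subsequent remark) gives $\sup_n\sup_x\E\Vert\partial_xX_t^{n,x}\Vert^{r}<\infty$ for every integer $r$. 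Since $w\in L^1$, Fubini controls this term. For the zeroth-order term we write $X_t^{n,x}=x+\int_0^tb_n(r,X_r^{n,x})\d r+B_t^H$. The drift integral has moments bounded uniformly in $x$ and $n$ by \cite[Lemma 3.11]{butkovsky2023weak}, equivalently by Girsanov together with Corollary \ref{sec3-col.1} applied to $X_t^{n,x}-x$. This gives
$$\E|X_t^{n,x}|^{r}\le C(1+|x|^{r}),$$
and $\int(1+|x|^{p_1})w\,\d x<\infty$ yields the bound. To pass from $p_1$-th moments to the $L^2(\Omega)$-norm of the $W^{1,p_1}$-norm, we choose an integer $r\ge \max(2,p_1)$. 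We then apply Hölder in $\omega$ and Minkowski/Jensen with respect to the probability measure $w\,\d x/\Vert w\Vert_{L^1}$, with the $|x|$-growth absorbed by the weight when $r=p_1$; for $r>p_1$ we can simply work with $p_1$ replaced by a larger integer.

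\textbf{Step 2: weak compactness and identification.} Since $p_1\in(1,\infty)$, the space $L^2(\Omega;W^{1,p_1}(\R^d,w))$ is reflexive. A subsequence therefore converges weakly to some $(Z,Y)$, where $Z$ is the limit of $X_t^{n,\cdot}$ and $Y$ the limit of $\partial_xX_t^{n,\cdot}$. To identify $Z=X_t^{\cdot}$ we use Lemma \ref{sec5-lm.4}: for $\varphi\in C_c^\infty(\R^d)$, $\langle X_t^{n,\cdot},\varphi\rangle\to\langle X_t^{\cdot},\varphi\rangle$ in $L^2(\Omega)$. Testing against $\mathbbm 1_A\varphi/w$ on $\{w>0\}$, or simply against $\mathbbm 1_A\varphi$ with $\varphi$ compactly supported (which is a bounded functional on the weighted space whenever $w$ is locally bounded below), shows $Z=X_t^{\cdot}$ a.e. on the support. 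Integration by parts, exactly as in Lemma \ref{sec5-lm.5}, then gives
$$\E[\mathbbm1_A\langle X_t^{\cdot},\partial\varphi\rangle]=-\E[\mathbbm1_A\langle Y,\varphi\rangle],$$
so $Y$ is the weak derivative.

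\textbf{Main obstacle.} The delicate point is this identification step. Test functions of the form $\varphi$ must define continuous functionals on $L^{p_1}(\R^d,w)$, which requires $w^{-1/(p_1-1)}\in L^1_{loc}$. I would first try to avoid the issue by working locally. On each ball $U$, Lemma \ref{sec5-lm.5} already identifies the weak limit as the weak derivative of $X_t^{\cdot}$. Uniqueness of weak limits on every bounded $U$, combined with the global uniform bound from Step 1 and lower semicontinuity of the norm under weak convergence, then shows that $X_t^{\cdot}$ and its weak gradient lie in $L^2(\Omega;W^{1,p_1}(\R^d,w))$.
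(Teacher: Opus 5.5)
Your proof is correct. It is more careful than the paper's, and it takes a different route in two places.

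\textbf{Comparison with the paper.} The paper bounds only the gradient term $\sup_n\E[\int_{\R^d}\Vert\frac{\partial}{\partial x}X_t^{n,x}\Vert^{p_1}w(x)\d x]^{2/p_1}$, using Jensen (separately for $p_1\ge2$ and $1<p_1\le2$) and Lemma \ref{sec5-prop.1}. It never treats the zeroth-order term $\E\Vert X_t^{n,\cdot}\Vert^2_{L^{p_1}(\R^d,w)}$, which is exactly where the moment condition $\int(1+|x|^{p_1})w\,\d x<\infty$ enters; you supply this. The paper then extracts a weak limit $Y$ directly in $L^2(\Omega;W^{1,p_1}(\R^d,w))$, asserted to be reflexive. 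It declares $Y$ to be the weak derivative after testing against $\mathbbm{1}_Af$ with $f\in L^{p_1'}(\R^d,w)$. That one-line identification tacitly requires compactly supported test functions to be bounded functionals on $L^{p_1}(\R^d,w)$, i.e.\ $w^{-1/(p_1-1)}\in L^1_{loc}$. This is the same condition that makes $W^{1,p_1}(\R^d,w)$ a closed subspace of $L^{p_1}(\R^d,w)^{1+d}$, and the paper does not state it. Your route identifies the gradient on balls via Lemma \ref{sec5-lm.5} and then transfers the uniform weighted bound by lower semicontinuity. It works for any integrable weight satisfying the moment condition. The paper's route is shorter but is only valid under that extra regularity of $w$.

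\textbf{Two points to tighten.}
\begin{itemize}
\item In Step 1, combining $\E|X_t^{n,x}|^r\le C(1+|x|^r)$ for $r\ge2>p_1$ with Jensen would need $\int|x|^2w\,\d x<\infty$, which is not assumed. You also cannot replace $p_1$ by a larger integer, since the moment condition on $w$ is tied to $p_1$. Instead write $X_t^{n,x}=x+R_t^{n,x}$ and use Minkowski in $L^{p_1}(\R^d,w)$. Then $\Vert x\Vert_{L^{p_1}(\R^d,w)}$ is finite and deterministic, and $\sup_{n,x}\E|R_t^{n,x}|^r<\infty$ lets you apply Jensen with respect to the probability measure $w\,\d x/\Vert w\Vert_{L^1}$.
\item You do not need the global weak limit in Step 2. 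Obtaining it via reflexivity of $W^{1,p_1}(\R^d,w)$ runs into the same local-integrability issue; if you want it, take limits in $L^2(\Omega;L^{p_1}(\R^d,w))^{1+d}$ instead. It suffices to note that $f\mapsto\E\Vert f\Vert^2_{L^{p_1}(U,w)}$ is convex and, by Fatou, strongly lower semicontinuous on $L^2(\Omega;L^{p_1}(U))$, hence weakly lower semicontinuous. This gives $\E\Vert\nabla X_t^{\cdot}\Vert^2_{L^{p_1}(U,w)}\le\sup_n\E\Vert\frac{\partial}{\partial x}X_t^{n,\cdot}\Vert^2_{L^{p_1}(\R^d,w)}$ uniformly in $U$, and you conclude by letting $U\uparrow\R^d$.
\end{itemize}
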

 			\begin{proof}
 				Let $X_{s,t}^{n,x}$ retain the same meaning as in previous lemmas. Assume first that $p_1\geq2$. Then, by Jensen's inequality and Lemma \ref{sec5-prop.1},
 				\begin{align*}
 					\sup_n\E\left[\int_{\R^d}\left\Vert\frac{\partial}{\partial x}X_t^{n,x}\right\Vert^{p_1}w(x)\d x\right]^{\frac2{p_1}}&\leq \sup_n\left(\E\int_{\R^d}\left\Vert\frac{\partial}{\partial x}X_t^{n,x}\right\Vert^{p_1}w(x)\d x\right)^{\frac2{p_1}}\\
 					&\leq \sup_n\left(\sup_{x\in\R^d}\left\Vert\frac{\partial}{\partial x}X_t^{n,x}\right\Vert^{p_1}\right)^{\frac2{p_1}}\left(\int_{R^d}w(x)\d x\right)^{\frac2{p_1}}<\infty.
 				\end{align*}
 				For $1<p_1\leq 2$, by Jensen's inequality with respect to the measure $w(x)\d x$, we have
 				\begin{align*}
 					&\sup_n\E\left[\int_{\R^d}\left\Vert\frac{\partial}{\partial x}X_t^{n,x}\right\Vert^{p_1}w(x)\d x\right]^{\frac2{p_1}}\\
 					&\qquad \leq \sup_n\left(\int_{\R^d}w(x)\d x\right)^{\frac2{p_1}}\E \left[\frac{1}{\int_{\R^d}w(x)\d x}\int_{\R^d}\left\Vert\frac{\partial}{\partial x}X_t^{n,x}\right\Vert^{p_1}w(x)\d x\right]^{\frac2{p_1}}\\
 					&\qquad \leq \sup_n\left(\int_{\R^d}w(x)\d x\right)^{\frac2{p_1}-1}\E\int_{\R^d}\left\Vert\frac{\partial}{\partial x}X_t^{n,x}\right\Vert^2w(x)\d x\\
 					&\qquad \leq \sup_n\sup_{x\in \R^d}\left\Vert\frac{\partial}{\partial x}X_t^{n,x}\right\Vert^2 \left(\int_{\R^d}w(x)\d x\right)^{\frac2{p_1}}<\infty
 				\end{align*}
 				Hence, following the same line of reasoning as that in Lemma \ref{sec5-lm.5}, we can find a subsequence $\frac{\partial}{\partial x}X_t^{n(k),x}$ converging to an element $Y\in L^2(\Omega,W^{1,p_1}(\R^d,w))$ in the weak topology. In particular, for every $A\in\mathcal{F}$ and $f\in L^{p'}(\R^d,w)$, we have
 				$$\lim_{k\rightarrow\infty}\E\left[\mathbbm{1}_A\int_{\R^d}\left\langle \frac{\partial}{\partial x}X_t^{n(k),x},f(x)\right\rangle_{\R^d}w(x)\d x\right]=\E\left[\mathbbm{1}_A\int_{\R^d}\langle Y,f(x)\rangle_{\R^d} w(x)\d x\right].$$
 				As a consequence, $Y$ coincide with the weak derivative of $X_t^x$. The proof is complete.
 			\end{proof}
 			\\[2pt] \textit{Proof of Theorem} \textit{\ref{sec1-thm.2}}: Denote by $[0,T]^2\times\R^d\times\Omega\ni(s,t,x,\omega)\mapsto \phi_{s,t}(x,\omega)$ the continuous version of of the solution map $(s,t,x,\omega)\mapsto X_{s,t}^x(\omega)$ provided by Proposition \ref{sec5-prop.2}. Let $\Omega^*$ be the set of all $\omega\in \Omega$ such that the SDE \eqref{sec1-eq.2} has a unique spatially Sobolev differentiable family of solutions. Then, by completeness of the probability space $(\Omega,\mathcal{F},\mathbb{P})$, it follows that $\Omega^*\subset\mathcal{F}$ and $\mathbb{P}(\Omega^*)=1$. Furthermore, by uniqueness of solutions of the SDE \eqref{sec1-eq.2}, one can follows a similar argument as in \cite[Proposition 5.17]{amine2023well} to check that property \textit{2}--\textit{4} stated in Theorem \ref{sec1-thm.2} holds for all $\omega\in\Omega^*$. Finally, we use Lemma \eqref{sec5-lm.3} and the relation $\phi_{s,t}(\cdot,\omega)=\phi_{s,t}^{-1}(\cdot,\omega)$, to complete the proof of the theorem.
 			\begin{remark}
 				Following from the same approach as in the proof Theorem \ref{sec1-thm.1} and invoking \ref{sec5-prop.1}, we can actually generalize Theorem \ref{sec1-thm.1} as follows. Assume that the coefficient $b$ of the SDE \eqref{sec1-eq.2} belongs to $L_x^pL^q_t$ with additional constraint
 				$$\frac{Hd}p+\frac1q<1-kH.$$
 				Then, the stochastic flow $\phi_{s,t}(x,\omega)$ associated with the SDE \eqref{sec1-eq.2} is $k$-th order Sobolev differentiable, i.e.
 				$$\phi_{s,t}(\cdot,\omega)\text{ and } \phi_{s,t}^{-1}(\cdot,\omega)\in L^2(\Omega,W^{k,p_1}(\R^d,w)),\qquad\forall\; s,t\in[0,T],\,p_1\in(1,\infty).$$
 			\end{remark}
		\begin{appendices}
			\section{Technical Lemmas}
			\begin{lemma}\label{appendix-lm.1}
				For $0<\beta\leq1,0<y<x<\infty$, it holds that
				\begin{equation}
					\frac{|y^{\alpha}-x^{\alpha}|}{(x-y)^\beta}\lesssim\left\{\begin{array}{l}
						y^{\alpha-\beta},\qquad \alpha<0;\\
						x^{\alpha-\beta},\qquad \alpha\geq0.
					\end{array}\right.
				\end{equation}
			\end{lemma}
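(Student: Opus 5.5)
The statement is the elementary inequality of Lemma \ref{appendix-lm.1}. Since the inequality is invariant under the scaling $(x,y)\mapsto(\lambda x,\lambda y)$ up to the factor $\lambda^{\alpha-\beta}$ on both sides, I will normalize. Set $u=y/x\in(0,1)$. The left-hand side becomes $x^{\alpha-\beta}|u^\alpha-1|(1-u)^{-\beta}$. The claim then reduces to the following: for $\alpha\ge0$ the function $g(u)=|1-u^\alpha|(1-u)^{-\beta}$ is bounded on $(0,1)$, and for $\alpha<0$ the function $g(u)u^{\beta-\alpha}$ is bounded on $(0,1)$. (Here I use $y^{\alpha-\beta}=x^{\alpha-\beta}u^{\alpha-\beta}$.)

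\emph{Case $\alpha\ge0$.} Near $u=1$, the mean value theorem gives $|1-u^\alpha|\le \max(\alpha,\alpha u^{\alpha-1})(1-u)$. On $[1/2,1)$ this is at most $C(1-u)$. Hence $g(u)\le C(1-u)^{1-\beta}\le C$, because $\beta\le1$. On $(0,1/2]$ we have $|1-u^\alpha|\le \max(1,2^{-\alpha})$ and $(1-u)^{-\beta}\le 2$, so $g$ is bounded there too. The case $\alpha=0$ is trivial.

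\emph{Case $\alpha<0$.} We need to bound $(u^\alpha-1)u^{\beta-\alpha}(1-u)^{-\beta}=(1-u^{-\alpha})u^{\beta}(1-u)^{-\beta}$. On $(0,1/2]$ this is at most $2^\beta$, since $0\le1-u^{-\alpha}\le1$. On $[1/2,1)$ the mean value theorem gives $1-u^{-\alpha}\le C_\alpha(1-u)$. The expression is then at most $C(1-u)^{1-\beta}\le C$.

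Both cases close with constants depending only on $\alpha,\beta$, which gives the stated $\lesssim$. The only delicate point is the endpoint $\beta=1$ near $u=1$. There one must use the full first-order vanishing of $1-u^{\pm\alpha}$, so the mean value theorem step is essential rather than crude bounds. I also expect that for $\alpha<0$ one could even replace $y^{\alpha-\beta}$ by $y^{\alpha}x^{-\beta}$, but the stated form is all that is needed.
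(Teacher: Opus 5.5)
Your proof is correct: writing $u=y/x$ reduces both cases to the boundedness on $(0,1)$ of $|1-u^{\pm\alpha}|(1-u)^{-\beta}$ (with an extra factor $u^{\beta}\le 1$ when $\alpha<0$). You handle this by splitting at $u=1/2$ and using the mean value theorem together with $\beta\le 1$ near $u=1$. The paper gives no argument beyond calling the lemma elementary, and yours is the standard verification it has in mind.
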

			\begin{proof}
				The proof is elementary and simple.
			\end{proof}
			
			The next lemma is so-called the \textit{taming-the-singularities} lemma, the proof of which is inspired by \cite[Lemma 3.4]{le2025taming}
			\begin{lemma}\label{appendix-lm.2}
				For $0<s<t,\beta>0,0<\gamma\leq\min\{\beta,1\},\gamma<\alpha+1$, it holds that
				\begin{equation}
					\int_s^tr^{-\beta}(t-r)^\alpha\d r\lesssim \left\{\begin{array}{ll}
						s^{-\beta+\gamma}(t-s)^{\alpha+1-\gamma},\qquad&\alpha<0;\\
						s^{-\beta+\gamma}(t-s)^{\alpha+1-\gamma-\varepsilon},\qquad&\alpha\geq0,
					\end{array}\right.
				\end{equation}
				where $\varepsilon>0$ can be arbitrarily small.
			\end{lemma}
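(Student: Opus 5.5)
The plan is to split the integral at the midpoint $m=(s+t)/2$ and compare $r^{-\beta}$ with $s^{-\beta}$ on each half. This is the usual taming-the-singularities idea: write $r^{-\beta}=r^{-\beta+\gamma}r^{-\gamma}$ and use $r\geq s$ to bound $r^{-\beta+\gamma}\le s^{-\beta+\gamma}$. This step needs $\gamma\le\beta$. What remains is
$$\int_s^t r^{-\gamma}(t-r)^{\alpha}\,\d r,$$
which must be shown to be $\lesssim (t-s)^{\alpha+1-\gamma}$, up to an $\varepsilon$ loss when $\alpha\ge0$.

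On $[m,t]$ we have $r\ge m\ge (t-s)/2$, so $r^{-\gamma}\lesssim (t-s)^{-\gamma}$. Then
$$\int_m^t(t-r)^\alpha\,\d r\lesssim (t-s)^{\alpha+1},$$
which is finite since $\alpha>-1$. This finiteness is implicit in the lemma; note that $\gamma<\alpha+1$ with $\gamma>0$ gives $\alpha>\gamma-1>-1$. This half contributes $(t-s)^{\alpha+1-\gamma}$ in both cases.

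On $[s,m]$ we have $t-r\in[(t-s)/2,\,t-s]$, so $(t-r)^\alpha\simeq (t-s)^\alpha$ whatever the sign of $\alpha$. It remains to bound $\int_s^m r^{-\gamma}\,\d r$.
\begin{itemize}
\item If $\gamma<1$, this integral is at most $\int_0^{m-s}u^{-\gamma}\,\d u\lesssim (t-s)^{1-\gamma}$, because $r\mapsto r^{-\gamma}$ is decreasing and so $\int_s^m r^{-\gamma}\,\d r\le\int_0^{m-s}u^{-\gamma}\,\d u$.
\item If $\gamma=1$, we get $\log(m/s)$, which is unbounded as $s\downarrow0$. This is the only genuine obstacle, and it is the source of the $\varepsilon$.
\end{itemize}

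The first remedy for the $\gamma=1$ case is to avoid it by running the argument with $\gamma-\varepsilon$ in place of $\gamma$. Then $r^{-\beta}\le s^{-\beta+\gamma-\varepsilon}r^{-\gamma+\varepsilon}$, and the previous computation gives the bound $s^{-\beta+\gamma-\varepsilon}(t-s)^{\alpha+1-\gamma+\varepsilon}$. To convert this to the claimed form, use $s^{-\varepsilon}(t-s)^{\varepsilon}\lesssim$ const. That holds only when $t-s\lesssim s$. In the opposite regime $s\le t-s$, estimate directly: $r^{-\beta}\le s^{-\beta+\gamma}r^{-\gamma}$, and on $[s,2s]$ the logarithm is bounded, while on $[2s,t]$ we have $r\simeq r-s$. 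This leads to $\int_{2s}^t (r-s)^{-1}(t-r)^\alpha\,\d r$, which is bounded by $(t-s)^{\alpha-\varepsilon'}\int_{0}^{t-s}u^{-1+\varepsilon'}\,\d u$ when $\alpha\ge0$, after absorbing $(t-r)^{\varepsilon'}\le (t-s)^{\varepsilon'}$.

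For $\alpha<0$ with $\gamma=1$, note that $\gamma<\alpha+1$ forbids $\gamma=1$, so $\gamma<1$ automatically and no $\varepsilon$ is needed. For $\alpha\ge0$, the bounded-range factor $(t-s)^{-\varepsilon}\gtrsim T^{-\varepsilon}$ absorbs the logarithmic losses. This gives the $\varepsilon$-loss exactly as stated.

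In summary, the main step is the midpoint splitting together with the constraint bookkeeping ($\gamma\le\beta$ for monotonicity, $\gamma<\alpha+1$ for integrability near $t$). The only delicate point is the endpoint case $\gamma=1$ for $\alpha\ge0$, which I handle by sacrificing an arbitrarily small power of $t-s$.
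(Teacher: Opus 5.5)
\textbf{Gap: the case $\gamma=1$.} Your argument fails here. This case forces $\alpha>0$, and the problem is in your regime $t\ge 2s$. You claim $\int_{2s}^t(r-s)^{-1}(t-r)^\alpha\,\d r\lesssim(t-s)^{\alpha-\varepsilon'}\int_0^{t-s}u^{-1+\varepsilon'}\,\d u$, and this is false. The trade $(t-r)^{\varepsilon'}\le(t-s)^{\varepsilon'}$ only moves powers of $t-r$, but the singular factor is $(r-s)^{-1}$. Writing $(r-s)^{-1}\lesssim(t-s)^{-\varepsilon'}(r-s)^{-1+\varepsilon'}$ would need $r-s\gtrsim t-s$, which fails near $r=2s$. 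Indeed, for $t\ge 4s$ the integral is at least $\bigl(\frac{t-s}{2}\bigr)^{\alpha}\log\frac{t-s}{2s}$. A factor $(t-s)^{-\varepsilon}$ cannot absorb a logarithm in $t/s$ that blows up as $s\downarrow0$ with $t$ fixed, so the $\varepsilon$ is not the remedy you describe.

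This cannot be repaired, because the statement itself is false at $\gamma=\beta=1$. Take $\alpha=1$ and $t=1$: the left-hand side is $\log(1/s)-(1-s)\to\infty$ as $s\downarrow0$, while the right-hand side is $(1-s)^{1-\varepsilon}\le 1$. When $\gamma=1<\beta$ the bound does hold. However, your reduction $r^{-\beta}\le s^{-\beta+1}r^{-1}$ throws away exactly the decay you need. Instead use $\int_s^t r^{-\beta}(t-r)^\alpha\,\d r\le (t-s)^\alpha\int_s^\infty r^{-\beta}\,\d r=\frac{s^{1-\beta}}{\beta-1}(t-s)^\alpha$, which has no $\varepsilon$ loss.

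\textbf{The case $\gamma<1$.} Here your midpoint splitting is correct. It gives $s^{-\beta+\gamma}(t-s)^{\alpha+1-\gamma}$ for both signs of $\alpha$, with no $\varepsilon$. More directly, $r^{-\gamma}\le (r-s)^{-\gamma}$ turns the reduced integral into $B(1-\gamma,\alpha+1)(t-s)^{\alpha+1-\gamma}$. This is simpler than the paper's dyadic decomposition $r_n=s+(t-s)2^{-n}$ combined with Lemma~\ref{appendix-lm.1}. It also covers everything the paper uses; for example, Lemma~\ref{sec4-lm.1} applies the lemma with $\gamma=q'H<1$.

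The paper's own Case 2 also tacitly needs $\gamma<1$. It invokes Lemma~\ref{appendix-lm.1} with exponent $\gamma+\varepsilon$, which that lemma only permits up to $1$. The correct statement should therefore exclude $\gamma=\beta=1$, and then the $\varepsilon$ loss is unnecessary throughout.
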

			\begin{proof}
				\textbf{Case 1}: $\alpha<0$. Then, the condition $0<\gamma\leq\min\{\beta,1\},\gamma<\alpha+1$ reduces to $0<\gamma\leq\beta,\gamma<\alpha+1$. As a consequence,
				\begin{align*}
					\int_s^tr^{-\beta}(t-r)^\alpha\d r&\leq s^{-\beta+\gamma}\int_s^tr^{-\gamma}(t-r)^\alpha\d r\\
					&\leq s^{-\beta+\gamma}\sum_{n=1}^\infty\int_{r_n}^{r_{n-1}}r^{-\gamma}(r_{n-1}-r)^\alpha\d r.
				\end{align*}
				where $r_n=s+(t-s)2^{-n}$ and we use the fact that $t-r>r_{n-1}-r,\alpha<0$ in the second inequality. Note that $r_n>(t-s)2^{-n}=r_{n-1}-r_n$. Thus, we complete the proof for case 1 due to the fact that $\alpha+1-\gamma>0$
				\begin{align*}
					&\sum_{n=1}^\infty\int_{r_n}^{r_{n-1}}r^{-\gamma}(r_{n-1}-r)^\alpha\d r\leq\sum_{n=1}^\infty r_n^{-\gamma}(r_{n-1}-r_n)^{\alpha+1}\\
					&\qquad\leq \sum_{n=1}^\infty (r_{n-1}-r_n)^{\alpha+1-\gamma}=(t-s)^{\alpha+1-\gamma}\sum_{n=1}^\infty 2^{-n(\alpha+1-\gamma)}\lesssim(t-s)^{\alpha+1-\gamma}.
				\end{align*}
				\textbf{Case 2}: $\alpha>0$. Following from the same argument as in Case 1, we have
				\begin{align*}
					\int_s^tr^{-\beta}(t-r)^\alpha\d r
					&\leq s^{-\beta+\gamma}\sum_{n=1}^\infty\int_{r_n}^{r_{n-1}}r^{-\gamma}(t-r)^\alpha\d r\\
					&\lesssim s^{-\beta+\gamma}\sum_{n=1}^\infty r_n^{-\gamma}\left[(t-r_n)^{\alpha+1}-(t-r_{n-1})^{\alpha+1}\right]\\
					&\leq s^{-\beta+\gamma}(t-s)^{\alpha+1-\gamma-\varepsilon}\sum_{n=1}^\infty r_n^{-\gamma}(r_n-r_{n-1})^{\gamma+\varepsilon}\\
					&\leq s^{-\beta+\gamma}(t-s)^{\alpha+1-\gamma-\varepsilon}\sum_{n=1}^\infty2^{-n\varepsilon}\lesssim s^{-\beta+\gamma}(t-s)^{\alpha+1-\gamma-\varepsilon},
				\end{align*}
				where we use Lemma \ref{appendix-lm.1} and the condition $\gamma\leq1$ in the second inequality. The proof is complete.
			\end{proof}
			\begin{remark}\label{appendix-rm.1}
				Let $\alpha,\beta,\gamma$ satisfy the conditions given in Lemma \ref{appendix-lm.2}. Then, it holds for $0<s<t$ that
				\begin{equation*}
					\int_0^s(t-r)^{-\beta}r^\alpha\d r\lesssim \left\{\begin{array}{ll}
						(t-s)^{-\beta+\gamma}s^{\alpha+1-\gamma},\qquad&\alpha<0;\\
						(t-s)^{-\beta+\gamma}s^{\alpha+1-\gamma-\varepsilon},\qquad&\alpha\geq0.
					\end{array}\right.
				\end{equation*}
				This inequality can be straightforwardly derived from Lemma \ref{appendix-lm.2} by changing the variable $u=t-r$.
			\end{remark}
			\begin{lemma}\label{appendix-lm.3}
				For every $\alpha_1,\alpha_2,\cdots,\alpha_m>0$,
				\begin{equation}
					\int_{s_0<s_1<\cdots<s_m<s_{m+1}}\prod_{j=1}^m(s_j-s_{j-1})^{\alpha_j-1}\prod_{j=1}^m\d s_j=(s_{m+1}-s_0)^{\sum_{j=1}^m\alpha_j}\frac{\prod_{j=1}^m\Gamma(\alpha_j)}{\Gamma\left(\sum_{j=1}^m\alpha_j+1\right)}
				\end{equation}
			\end{lemma}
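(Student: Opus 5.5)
The statement to prove is the Dirichlet-type simplex integral in Lemma \ref{appendix-lm.3}. I will argue by induction on $m$, after a translation and scaling reduction. First I substitute $u_j=s_j-s_0$, which reduces the integral to the case $s_0=0$ with upper endpoint $L:=s_{m+1}-s_0$. Then I rescale $u_j=L v_j$. The Jacobian contributes $L^m$, and the integrand contributes $L^{\sum_j(\alpha_j-1)}$, for a total factor $L^{\sum_j\alpha_j}$. It therefore suffices to show that
$$I_m(\alpha_1,\dots,\alpha_m):=\int_{0<v_1<\cdots<v_m<1}\prod_{j=1}^m(v_j-v_{j-1})^{\alpha_j-1}\,\d v=\frac{\prod_{j=1}^m\Gamma(\alpha_j)}{\Gamma\bigl(\sum_{j=1}^m\alpha_j+1\bigr)},\qquad v_0=0.$$
Note that the upper endpoint $s_{m+1}$ carries no factor in the integrand, which is why the right-hand side has $+1$ in the Gamma function.

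For the base case $m=1$, we have $\int_0^1 v^{\alpha_1-1}\d v=1/\alpha_1=\Gamma(\alpha_1)/\Gamma(\alpha_1+1)$.

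For the inductive step, I fix the last variable $v_m=x\in(0,1)$. The inner integral over $0<v_1<\dots<v_{m-1}<x$, with the factor $(x-v_{m-1})^{\alpha_m-1}$ included, is a simplex integral of the same type on $[0,x]$. The difference is that it now has $m$ factors, the last of which runs up to the endpoint $x$. I will therefore prove the slightly stronger auxiliary identity
$$\int_{0<v_1<\dots<v_{k}<x}\prod_{j=1}^{k}(v_j-v_{j-1})^{\alpha_j-1}(x-v_k)^{\alpha_{k+1}-1}\,\d v=x^{\sum_{j=1}^{k+1}\alpha_j-1}\frac{\prod_{j=1}^{k+1}\Gamma(\alpha_j)}{\Gamma\bigl(\sum_{j=1}^{k+1}\alpha_j\bigr)}.$$
I prove it by induction on $k$. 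Integrating out $v_k=y$ last leaves $\int_0^x y^{A-1}(x-y)^{\alpha_{k+1}-1}\d y$, where $A=\sum_{j\le k}\alpha_j$ comes from the inductive hypothesis. This equals $x^{A+\alpha_{k+1}-1}B(A,\alpha_{k+1})$, and the Beta–Gamma relation $B(a,b)=\Gamma(a)\Gamma(b)/\Gamma(a+b)$ makes the Gamma factors telescope. Fubini applies throughout because all integrands are nonnegative.

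To finish, I apply the auxiliary identity with $k=m-1$ and $x=v_m$. The remaining integral is $\int_0^1 x^{\sum_j\alpha_j-1}\d x=1/\sum_j\alpha_j$. Combining this with $\Gamma(S)\cdot S=\Gamma(S+1)$, where $S=\sum_j\alpha_j$, gives the claim.

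The only point needing care is the bookkeeping of the exponent $-1$ in the auxiliary identity versus the $+1$ in the lemma. There is no analytic obstacle, since the positivity $\alpha_j>0$ guarantees that every Beta integral converges.
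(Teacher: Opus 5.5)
Your argument is correct and supplies the standard computation the paper has in mind; the paper's own proof is just the phrase ``by a straightforward calculation,'' and your translation--scaling reduction followed by iterated Beta integrals, telescoping via $B(a,b)=\Gamma(a)\Gamma(b)/\Gamma(a+b)$, carries it out in full. The only thing you leave implicit is the base case of the auxiliary identity: at $k=0$ it reads $x^{\alpha_1-1}=x^{\alpha_1-1}$, which is trivial and also makes your separate $m=1$ check unnecessary.
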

			\begin{proof}
				By a straight forward calculation.
			\end{proof}
			\begin{lemma}\label{appendix-lm.4}
				Let $K_H(t,s)$ be the kernel of fBm for $H<1/2$. Then, for $0<s<t<T$, 
				\begin{equation}
					K_H(t,s)\leq C_{H,T}s^{H-\frac12}(s-t)^{H-\frac12}.
				\end{equation}
				Moreover, for $0<s_1<s_2<t<T$, we have
				\begin{equation}
					|K_H(t,s_2)-K_H(t,s_1)|\leq C_{H,T}\left(\frac{s_2-s_1}{s_1s_2}\right)^\gamma s_2^{H-\frac12-\gamma}(t-s_2)^{H-\frac12-\gamma},
				\end{equation}
				where $\gamma<H$. As a consequence, there exists a $\beta>0$ such that
				\begin{equation}
					\int_0^t\int_0^t\frac{|K_H(t,s_2)-K_H(t,s_1)|}{|s_2-s_1|^{1+2\beta}}\d s_1\d s_2<\infty.
				\end{equation}
			\end{lemma}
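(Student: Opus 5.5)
The plan is to estimate the two summands in the definition of $K_H(t,s)$ separately, using a scaling form for the integral part. (I read the factor $(s-t)^{H-\frac12}$ in the first display as $(t-s)^{H-\frac12}$, since $s<t$.) Throughout, $0<s<t\le T$ and $H<\frac12$.

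\emph{First bound.} Write $K_H(t,s)=C_H[\,K^{(1)}(t,s)+(\tfrac12-H)K^{(2)}(t,s)\,]$ with
\[
K^{(1)}(t,s)=\Big(\frac ts\Big)^{H-\frac12}(t-s)^{H-\frac12},
\qquad
K^{(2)}(t,s)=s^{\frac12-H}\int_s^t r^{H-\frac32}(r-s)^{H-\frac12}\,\d r .
\]
\begin{itemize}
\item Since $t/s>1$ and $H-\frac12<0$, we have $(t/s)^{H-\frac12}\le 1$. Hence $K^{(1)}\le (t-s)^{H-\frac12}\le T^{\frac12-H}s^{H-\frac12}(t-s)^{H-\frac12}$, using $s^{H-\frac12}\ge T^{H-\frac12}$.
\item For $K^{(2)}$ I substitute $r=su$. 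This gives $K^{(2)}(t,s)=s^{H-\frac12}F(t/s)$, where
\[
F(x)=\int_1^x u^{H-\frac32}(u-1)^{H-\frac12}\,\d u .
\]
\item $F$ is bounded on $(1,\infty)$. The integrand is integrable near $u=1$ because $H-\frac12>-1$, and near $\infty$ because $2H-2<-1$.
\item Therefore $K^{(2)}\le C s^{H-\frac12}\le C T^{\frac12-H}s^{H-\frac12}(t-s)^{H-\frac12}$, which completes the first claim.
\end{itemize}

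\emph{Increment bound.} I would interpolate between a crude bound and a derivative bound, using $|a|\le |a|^{1-\gamma}|a|^{\gamma}$.
\begin{itemize}
\item The crude bound is $|K_H(t,s_2)-K_H(t,s_1)|\le K_H(t,s_1)+K_H(t,s_2)$, which is controlled by the first part.
\item The derivative bound comes from $|K_H(t,s_2)-K_H(t,s_1)|\le\int_{s_1}^{s_2}|\partial_sK_H(t,s)|\,\d s$.
\item From the scaling form, $\partial_s K_H(t,s)$ is a combination of $s^{H-\frac32}\Phi(t/s)$ and $s^{H-\frac12}(t/s^2)F'(t/s)$, plus the derivative of $K^{(1)}$. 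Here $\Phi$ is bounded and $F'(x)=x^{H-\frac32}(x-1)^{H-\frac12}$.
\item This should yield $|\partial_sK_H(t,s)|\lesssim s^{H-\frac12}(t-s)^{H-\frac12}\big(s^{-1}+(t-s)^{-1}\big)$.
\item Integrating over $[s_1,s_2]$ produces a factor comparable to $(s_2-s_1)\big(1/(s_1s_2)+1/((t-s_2)\cdot)\big)$. The $(t-s)^{-1}$ piece is handled by $t-s\ge t-s_2$.
\item Taking the geometric mean with exponents $1-\gamma$ and $\gamma$, and using Lemma \ref{appendix-lm.1} for the power differences $s_2^{\alpha}-s_1^{\alpha}$, gives the stated form $\big(\frac{s_2-s_1}{s_1s_2}\big)^{\gamma}s_2^{H-\frac12-\gamma}(t-s_2)^{H-\frac12-\gamma}$.
\end{itemize}

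\emph{Main obstacle.} This step is the bookkeeping of the $(t-s)^{-1}$ contribution when $s_2$ is close to $t$. It produces an extra factor $\big((s_2-s_1)/(t-s_2)\big)^{\gamma}$ rather than the stated $(s_1s_2)^{-\gamma}$ one. I would first try to absorb it into $(t-s_2)^{-\gamma}$, which is exactly why the exponent on $(t-s_2)$ is $H-\frac12-\gamma$. If the literal form fails, I would keep the bound as a sum of the two factors, since only integrability is used later.

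\emph{Double integral.} As used in Lemma \ref{sec4-lm.5}, I prove finiteness with the squared increment, which also implies the stated version. Insert the square of the increment bound:
\[
\frac{|K_H(t,s_2)-K_H(t,s_1)|^2}{|s_2-s_1|^{1+2\beta}}
\lesssim (s_2-s_1)^{2\gamma-1-2\beta}\,(s_1s_2)^{-2\gamma}\,s_2^{2H-1-2\gamma}(t-s_2)^{2H-1-2\gamma}.
\]
\begin{itemize}
\item Choose $0<\beta<\gamma$ with $\gamma$ small; in particular $\gamma<H$ and $\gamma<\frac14$.
\item The factor $(s_2-s_1)^{2\gamma-1-2\beta}$ is integrable in $s_1$.
\item $(t-s_2)^{2H-1-2\gamma}$ is integrable because $2H-2\gamma>0$.
\item Integrating $s_1$ over $(0,s_2)$ gives $s_2^{\,2\gamma-2\beta-2\gamma}$ times $s_2^{-2\gamma+2H-1-2\gamma}$. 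The resulting power of $s_2$ is $2H-1-4\gamma-2\beta>-1$ for small $\gamma,\beta$.
\item The region $s_1>s_2$ is symmetric. This gives a finite double integral for some $\beta>0$.
\end{itemize}
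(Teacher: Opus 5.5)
Your proof of the first bound is correct and self-contained; the paper itself gives no argument and only cites \cite[Lemma A.4]{banos2020strong}. The bound $(t/s)^{H-\frac12}\le 1$ handles $K^{(1)}$, and the substitution $r=su$, giving $K^{(2)}(t,s)=s^{H-\frac12}F(t/s)$ with $F$ bounded, handles the rest.

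\textbf{The gap is the increment bound.} The sentence ``taking the geometric mean \dots\ gives the stated form'' cannot be made rigorous, because the second display is false whenever $\gamma<\frac12-H$. In particular it fails for every admissible $\gamma<H$ when $H\le\frac14$.

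Your own scaling form shows this. As $s_1\downarrow 0$ we have $K^{(1)}(t,s_1)\to 0$, while $K^{(2)}(t,s_1)=s_1^{H-\frac12}F(t/s_1)\sim F(\infty)\,s_1^{H-\frac12}$, with $F(\infty)=\mathrm{B}(1-2H,H+\frac12)>0$. Fix $t$ and take $s_2=t/2$. Then the left side grows like $s_1^{H-\frac12}$, but the right side grows only like $s_1^{-\gamma}$.

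The failure is visible inside your argument:
\begin{itemize}
\item The crude bound contributes $K_H(t,s_1)^{1-\gamma}\approx s_1^{(H-\frac12)(1-\gamma)}$.
\item Lemma~\ref{appendix-lm.1} applied to $s^{H-\frac12}$ (negative exponent) returns $s_1^{H-\frac12-\gamma}$.
\end{itemize}
In both cases the singular power sits at the smaller point $s_1$, and it cannot be traded for $(s_1s_2)^{-\gamma}s_2^{H-\frac12-\gamma}$.

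The regime you singled out as the main obstacle is actually harmless. A factor $(s_2-s_1)^{\gamma}s_2^{H-\frac12}(t-s_2)^{H-\frac12-\gamma}$ is at most $T^{3\gamma}$ times the target, since $(s_1s_2)^{-\gamma}s_2^{-\gamma}\ge T^{-3\gamma}$. The real obstruction is at $s=0$, not near $t$.

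\textbf{What your method does give.} Split
\[
K^{(2)}(t,s_2)-K^{(2)}(t,s_1)=(s_2^{H-\frac12}-s_1^{H-\frac12})F(t/s_2)+s_1^{H-\frac12}\bigl(F(t/s_2)-F(t/s_1)\bigr),
\]
and use $0\le F(t/s_2)-F(t/s_1)\lesssim\bigl((s_2-s_1)/s_1\bigr)^{\gamma}$. Treat $K^{(1)}$ with Lemma~\ref{appendix-lm.1}. This yields the bound with the singular power at $s_1$:
\[
|K_H(t,s_2)-K_H(t,s_1)|\le C_{H,T}\,(s_2-s_1)^{\gamma}\,s_1^{H-\frac12-\gamma}\,(t-s_2)^{H-\frac12-\gamma},\qquad 0<\gamma<H .
\]

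With this bound in place of the stated display, your double-integral computation goes through for $0<\beta<\gamma<H$:
\[
\int_0^{s_2}(s_2-s_1)^{2\gamma-1-2\beta}s_1^{2H-1-2\gamma}\,\mathrm{d}s_1=\mathrm{B}(2\gamma-2\beta,\,2H-2\gamma)\,s_2^{2H-1-2\beta},
\]
and $\int_0^t s_2^{2H-1-2\beta}(t-s_2)^{2H-1-2\gamma}\,\mathrm{d}s_2<\infty$.

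So the third claim survives; it is all that Lemma~\ref{sec4-lm.5} uses, in squared form. The second display, however, has to be replaced rather than proved.

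One smaller point: passing from the squared to the unsquared double integral is not automatic at the same $\beta$. By Cauchy--Schwarz on the bounded square it holds with any $\beta'<\beta/2$. That suffices, since only the existence of some $\beta>0$ is claimed.
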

			\begin{proof}
				See e.g. \cite[Lemma A.4]{banos2020strong}.
			\end{proof}
		\end{appendices}
		\bibliographystyle{plain}
		\bibliography{ref}
\end{document}